\numberwithin{equation}{section}
\theoremstyle{plain}
\newtheorem{theorem}{Theorem}[section]
\newtheorem{lemma}[theorem]{Lemma}
\newtheorem{observation}[theorem]{Observation}
\newtheorem{corollary}[theorem]{Corollary}
\newtheorem{claim}{Claim}[theorem]
\newtheorem*{THMACCFGTHM}{Lemma~\ref{thm:ACCFG}}
\newtheorem*{THMdownsetsofNmTHM}{Lemma~\ref{thm:downsetsofNm}}
\newtheorem*{THMpropCRTHM}{Theorem~\ref{thm:propCR}}
\newtheorem*{THMgaussrearrangementTHM}{Lemma~\ref{thm:gaussrearrangement}}
\newtheorem*{THMdecompositionTHM}{Lemma~\ref{thm:decomposition}}
\newtheorem*{THMcanon45THM}{Lemma~\ref{thm:canon45}}
\newtheorem*{THMcanon2THM}{Lemma~\ref{thm:canon2}}
\newtheorem*{THMcanon3THM}{Lemma~\ref{thm:canon3}}
\newtheorem*{THMdisjeq01THM}{Lemma~\ref{thm:disjeq01}}
\newtheorem*{THMcanon12THM}{Lemma~\ref{thm:canon12}}
\theoremstyle{definition}
\newtheorem{definition}[theorem]{Definition}
\newtheorem{convention}[theorem]{Convention}
\newtheorem{remark}[theorem]{Remark}
\DeclareMathOperator{\cls}{cls}
\DeclareMathOperator{\Clo}{Clo}
\DeclareMathOperator{\sClo}{sClo}
\DeclareMathOperator{\upset}{\uparrow}
\DeclareMathOperator{\downset}{\downarrow}
\DeclareMathOperator{\Pol}{Pol}
\DeclareMathOperator{\Inv}{Inv}
\DeclareMathOperator{\CSP}{CSP}
\DeclareMathOperator{\sPol}{sPol}
\newcommand{\N}{\mathbb{N}}
\newcommand{\OO}{\mathcal{O}}
\newcommand{\pp}[2][]{\left[#2\right]^{#1}_{\mathrm{pp}}\,}
\newcommand{\qpp}[2][]{\left[#2\right]^{#1}_{\mathrm{qpp}}\,}
\newcommand{\eo}[2]{\left[#1\right]_{(\mathrm{eo}1)-(\mathrm{eo}#2)}\,}
\newcommand{\eoc}[1]{\left[#1\right]_{(\mathrm{eo}6)}\,}
\newcommand{\KR}{K\!R}
\newcommand{\CR}{C\!R}
\newcommand{\disj}[1]{\left\vert\!\left\vert #1 \right\vert\!\right\vert}
\begin{document}

%%%%%%%%%%%%%%%%%%%%%%%%%%%%%%%%%%%%%%%%%%%%%%%%%%%%%%%%%%%%%%%%%%%%%%
%% FRONT MATTER
%%%%%%%%%%%%%%%%%%%%%%%%%%%%%%%%%%%%%%%%%%%%%%%%%%%%%%%%%%%%%%%%%%%%%%

\title[Multi-sorted relational clones on $\{0,1\}$]{On the lattice of multi-sorted relational clones on a two-element set}

%% For a single-authored paper, please use .
%% For multiple-authored papers, the line above MUST NOT be commented out
%% and $^*$ should be attached to the end of the name of the corresponding
%% author, as in the example below. 

%% The $^*$ on the name of the corresponding author on the submitted 
%% version of all multiple-authored papers is required by new EU data 
%% protection regulations. In line with the policy of Algebra Universalis 
%% since its inception, the $^*$ will not appear on the published version 
%% of the paper.

%% First author: in the order \author, \address, \urladdr, \email
%% For a sole-authored paper, use the \author[]{} command. 
%% For a multiple-authored paper, use the command \corrauthor[]{} for the corresponding author.
%% The corresponding author does not have to be the first-named author.
\author[V. David]{Vojtěch David}
\address{Department of Theoretical Computer Science and Mathematical Logic,\\ Charles University, Czechia}
\email{vojtech.david@matfyz.cuni.cz}

%% Second author: in the order \author, \address, \urladdr, \email
\author[D. N. Zhuk]{Dmitriy N. Zhuk}
\address{Department of Algebra,\\ Charles University, Czechia}
\email{dmitrii.zhuk@matfyz.cuni.cz}

%% Thanks (Optional)
\thanks{Both authors were supported by the Czech Science Foundation project 25-16324S. The work of the first author is part of the project SVV-2025-260837.
The second author is also funded by the European Union (ERC, POCOCOP, 101071674). Views and opinions expressed are however those of the author(s) only and do not necessarily reflect those of the European Union or the European Research Council Executive Agency. Neither the European Union nor the granting authority can be held responsible for them.}

%% Dedication (Optional)
%\dedicatory{This article is dedicated to AU authors, present and future}

%% AMS subject classification; see http://www.ams.org/msc
%% List classification codes in order of relevance
\subjclass{08A40, 08A02, 06E30}

%% Key words and phrases
\keywords{Relational clone, Clone, Galois connection, Lattice of clones, Boolean clone, Multi-sorted clone}

\begin{abstract}
We introduce a new approach to the description of multi-sorted clones (sets of $k$-tuples of operations of the same arity, closed under coordinatewise composition and containing all projection tuples) on a two-element domain. Leveraging the well-known Galois connection between operations and relations, we define a small class of \textit{canonical relations} sufficient to describe all Boolean multi-sorted clones up to non-surjective operations. Furthermore, we introduce \textit{elementary operations} on relations, which are less cumbersome than general formulas and have many useful properties. Using these tools, we provide a~new and elementary proof of the famous Post's lattice theorem. We also show that every multi-sorted clone of $k$-tuples of operations decomposes into a surjective part described by canonical relations and $2k$ clones of $(k-1)$-tuples of operations. This structural understanding allows us to describe an embedding of the lattice of multi-sorted clones into a well-understood poset. In particular, we rederive -- by a simpler method -- a~result of V.~Taimanov originally from 1983, showing that every multi-sorted clone on a two-element domain is finitely generated. Finally, we also give a~concise proof of the Galois connection between (surjective) multi-sorted clones and the corresponding closed sets of relations.
\end{abstract}

\maketitle

%%%%%%%%%%%%%%%%%%%%%%%%%%%%%%%%%%%%%%%%%%%%%%%%%%%%%%%%%%%%%%%%%%%%%%
%% MAIN MATTER
%%%%%%%%%%%%%%%%%%%%%%%%%%%%%%%%%%%%%%%%%%%%%%%%%%%%%%%%%%%%%%%%%%%%%%
\section{Introduction and main results}\label{sec:intro}
\subsection{Introduction}

Clones are sets of operations closed under composition and containing all projections; objects that naturally arise in most areas of mathematics. For instance, we know that composing monotone operations yields a monotone operation, 
and composing linear operations yields a linear one, which gives rise to the notions of clones of monotone and linear operations, respectively.  
Arguably the main result in clone theory is the description of all clones on a 2-element set obtained by E. Post in \cite{Post1920,Post1941}. This result is not only elegant and important on its own -- description of all clones is essentially the description of all algebraic structures -- but has become a valuable tool in areas ranging from logic to complexity theory \cite{CSPAndPOST}.
In light of this result, a natural next step was to seek similar descriptions for larger domains. However, it seems unrealistic to describe all clones even on a three-element set as the lattice is uncountable \cite{JanovMucnik1959,HulanickiSwierczkowski1960} and very complicated \cite{bulatov1999sublatticesone,
bulatov1999sublatticestwo,
bulatov2001conditions,MoiseevNUclones}.

Probably the second most important result is a Galois connection between clones and relational clones \cite{Geiger1968,Bodnarchuk1969I,Bodnarchuk1969II}, where relational clones are sets of relations (or, equivalently, predicates, see Definition~\ref{def:relations}) closed under primitive positive (pp for short) formulas (that is, formulas using only $\wedge$ and $\exists$) and containing both empty and equality relations. Through this correspondence, every clone on a finite set can be described as the set of all operations preserving some (possibly infinite) set of relations. More importantly, the entire lattice of clones can be analyzed purely in terms of relations and pp formulas.

One key advantage of this perspective is that working with relations tends to be more tractable than working directly with operations, since pp definitions are often simpler to compute than compositions. 
In fact, describing minimal clones (the atoms of the lattice of clones) requires dealing with operations and their composition, and to date, they have only been classified for three-element domain \cite{csakany1983minimal}, and even those were found by a computer. A~classification for the four-element domain was announced in \cite{MinimalCLonesOn4Elements}, but to the~best of our knowledge, it has not been fully confirmed.
In contrast, maximal clones (the coatoms of the lattice) on any finite set have been fully classified for over 50 years \cite{rosmax}, with
each maximal clone being defined by a~concrete relation describing some nice property of operations such as being monotone or linear.

In this paper, we consider a generalization of clones known as multi-sorted clones, 
where instead of operations, we work with $k$-tuples of operations of the same arity (referred to as $k$-operations in the paper).
Each operation in the tuple then corresponds to a specific sort, 
and composition is defined coordinatewise (see the precise definition in Section \ref{sec:definitions}).

Various classification results are already known in the multi-sorted setting when the domain is fixed to $\{0,\,1\}$. It is known that there are only countably many $k$-sorted clones on a two-element domain and that each is finitely generated -- a result first announced by V. Taimanov in 1983 \cite{Taimanov1983} and later established in detail in his 1984 PhD thesis \cite{Taimanov1984thesis}. More recently, Taimanov republished these results in a series of three shorter papers \cite{Taimanov2019,Taimanov2020,Taimanov2022}. The~works \cite{Taimanov1983,Taimanov1984thesis} also include a classification of the maximal multi-sorted clones on $\{0,\,1\}$, with alternative description given in \cite{Romov1987,Romov1989}. A related result -- the classification of maximal $k$-sorted clones where the first $k-1$ sorts consist of operations on two-element set and the last on any arbitrary set -- is presented in \cite{Romov1991}, and the maximal multi-sorted clones with operations on either a two- or three-element set are described in \cite{Marchenkov1994}. Finally, a classification up to minion homomorphisms of multi-sorted clones on $\{0,\,1\}$ determined by binary relations was recently given in \cite{Barto2025}. The main aim of our paper is to continue this classification program by providing new tools and structural insights.

One reason the multi-sorted generalization is particularly natural is the Galois connection we can build between clones of $k$-operations and relational clones of $k$-sorted relations; that is, relations where each variable has assigned a sort from 1 to $k$ and variables may only be identified if they share the same sort.
In fact, in both theory and applications, it is common for different coordinates of a relation to come from different domains, making multi-sorted relations natural objects in their own right.

Another important feature of working with relations is that we do not need all of them to define clones.
Knowing the Galois connection, we can argue that it suffices to consider relations that cannot be represented as conjunctions of relations of smaller arities -- such relations are called \textit{essential}~\cite{zhuk_dm_post,CardinalityAboveMinimalZhuk}. 
Already, this simple observation led to a new proof of Post's lattice theorem \cite{zhuk_dm_post} and was the main idea behind the description of all clones below the maximal clone of self-dual operations on three-element domain  \cite{mvlsc},  
which, to this day, remains the only uncountable sublattice below a maximal clone that has been described.

A very natural next step is to consider only relations that 
are $\cap$-irreduci\-ble in the corresponding relational clone,  called \textit{critical} in~\cite{agnesParal} and \textit{maximal} in~\cite{mvlsc}.
However, since being critical heavily depends on the relational clone and is therefore hard to verify, it is more desirable to introduce a combinatorial property (similar to essentiality) that is almost as strong as criticality. Such a property, the notion of key relation, was introduced in \cite{ZhukKeyRelations}, and it is equivalent to being critical in the relational clone generated by the relation, assuming that all the coordinates are of different sorts.
Another motivation for studying key relations is
a nice description of all (multi-sorted) key relations on the two-element set:
they are exactly the relations that can be defined as disjunctions of linear equations \cite{ZhukKeyRelations}. Moreover, as explained in detail in Section \ref{sec:definitions},
computing primitive positive formulas over such relations is very simple.
In the paper, we exploit these disjunctive relations to describe multi-sorted relational clones 
on a two-element set.

Nevertheless, in order to characterize all multi-sorted clones on a two-element domain, we still need to consider all 
combinations of such relations -- and there are simply too many of them. 
To overcome this, we take one more step: we strengthen our closure operator by allowing conjunctive formulas with both existential and universal quantifiers (called quantified primitive positive formulas, or qpp formulas for short; see Section~\ref{sec:definitions} for the precise definition).
This closure operator also admits a Galois connection, but this time, we consider only surjective operations instead. 
Fortunately, on a two-element domain, the only nonsurjective operations are constants, which can be easily added back to the surjective part afterward.
To make the consideration of all combinations feasible, we introduce \textit{canonical relations} -- a restricted class of relations satisfying the following three properties:
(1)~any quantified relational clone can be defined by these relations,
(2)~for any canonical relation $\rho$ there are no weaker relations that altogether generate $\rho$, 
(3)~the arity of the relations cannot be reduced while preserving property (1). 
To demonstrate the gap between arbitrary relations and canonical relations, notice that on a two-element set, there are $2^{2^n}\cdot k^{n}$ $n$-ary $k$-sorted relations, 
about $(1-1/\sqrt{e})\cdot 2^{2^n}\cdot k^{n}$ $n$-ary $k$-sorted essential relations \cite{Zharikov}, 
about $2^{n^2/4}\cdot k^{n}$ $n$-ary $k$-sorted key relations, 
and only about $2^n\cdot k^n$ $n$-ary $k$-sorted canonical relations. In fact, for $n >k$, the only $n$-ary $k$-sorted canonical relations are the relations of size $2^n-1$; see Definition~\ref{def:canonicalrelations} for details. 

Let us note that a similar approach -- restricting attention to carefully chosen relations -- was already taken by B.~Romov in \cite{Romov1987,Romov1989,Romov1991} and by S.~Marchenkov in \cite{Marchenkov1994}. Interestingly, some of the relations they considered are very similar to key and canonical relations, even though their work predates the theory introduced in \cite{ZhukKeyRelations} by more than 20 years. This highlights how natural this perspective is and further motivates the need for a unified framework.

\subsection{Main results}

Since clones on a two-element set are fully characterized, while clones on larger domains are too complicated to describe,
it is natural to attempt a~description of multi-sorted clones on a two-element domain.
In this paper, we develop a technique that makes this feasible.

The first main result of the paper is presented in Section~\ref{sec:canpreds}, where we define canonical relations and prove that they satisfy the properties (1)--(3) outlined above.
Further, in Section~\ref{sec:eo}, we introduce elementary operations and show how they can be used instead of qpp formulas for relations defined as disjunctions of linear equations.
These two ideas together are so powerful that deriving Post's lattice from them becomes straightforward, as demonstrated in Section~\ref{sec:post}.
Thus, a new short proof of Post's lattice is another main result of the paper.

The third result states that any clone of $k$-operations can be decomposed into a surjective part definable by canonical relations, which therefore admits an embedding into a simple and well-understood poset, 
and $2k$ clones of $(k-1)$-operations.  
This decomposition implies Taimanov's result \cite{Taimanov1983,Taimanov1984thesis,Taimanov2019,Taimanov2020,Taimanov2022} -- that there are only countably many $k$-sorted clones and that each of them is finitely generated. 
Note that Taimanov worked mostly with operations, whereas in our setting operations are auxiliary objects that allow us to make decomposition -- and theoretically, they could be avoided altogether.

The last contribution of this paper is a complete proof of a Galois connection 
between clones of $k$-sorted surjective operations and quantified $k$-sorted relational clones. 
Although our proof is essentially a generalization of the corresponding result from \cite{Borner2009} to the multi-sorted setting, to our knowledge, it has never been properly written down.

\subsection{Applications}

Even though we do not provide a complete description of multi-sorted clones on a two-element domain even for two sorts, it is plausible to derive such a description using the ideas developed in this paper. 
Below, we give four other examples of how our results can be applied to concrete problems. 

\begin{enumerate}
\item An important class of clones in universal algebra 
consists of clones of idempotent operations, that is, operations satisfying $f(x,\dots,\,x) = x$. 
For instance, the entire theory of Taylor varieties and Taylor identities was developed primarily for the idempotent reduct of algebras \cite{taylor1977varieties,maroti2008existence}. 
Relational clones corresponding to idempotent clones are precisely those containing all constant relations. Since any qpp formula using universal quantifiers may be replaced by a pp formula using these constant relations, our technique provides a natural tool for characterizing idempotent clones.
\item One of the minimal clones on $\{0,\,1,\,2\}$ is generated by 
the following semiprojection~\cite{MinimalCLonesOn4Elements}:
\begin{equation*}
s_5(x,\,y,\,z) \coloneqq \begin{cases}
x, & |\{x,\,y,\,z\}|<3,\\
y, & |\{x,\,y,\,z\}|=3.\\
\end{cases}
\end{equation*}
It was shown in~\cite[Section 6]{CardinalityAboveMinimalZhuk} that there are two types of
essential relations on $\{0,\,1,\,2\}$
preserved by $s_5$:
graphs of permutations and
relations whose projection onto every coordinate is a two-element set.
The~latter relations can be viewed as multi-sorted relations with variables of three sorts,
where sorts depend on the projections onto the corresponding coordinate, that is  $\{0,\,1\}$, $\{1,\,2\}$, or $\{0,\,2\}$.
Therefore, clones on three elements containing this semiprojection can be embedded into the clones of 3-operations on a two-element domain 
and it seems plausible that our technique could lead to a complete characterization of all such clones.
\item Primitive positive formulas play an important role in the study of the complexity of the \textit{Constraint Satisfaction Problems} (CSP). 
Let $\Gamma$ be a~set of multi-sorted relations on a two-element set,  and let $\CSP(\Gamma)$ denote a decision problem where the input 
is a conjunction of relations from $\Gamma$, and the task is to decide whether it is satisfiable.  
Since checking satisfiability is equivalent to evaluating a sentence after adding existential quantifiers, 
$\CSP(\Gamma_1)$ is LOGSPACE-reducible to $\CSP(\Gamma_2)$  whenever $\Gamma_2$ pp-defines $\Gamma_1$ \cite{jeavons1,jeavons3}. 
Our results imply that instead of considering arbitrary $\Gamma$ we may consider $\Gamma$ consisting of disjunction of linear equations 
and our technique can be further developed into an algorithm for CSP that eliminates existential quantifiers one by one using elementary operations. Furthermore, it is known that CSP over an arbitrary $\Gamma$ is 
equivalent to the CSP over its \textit{core} \cite{jeavons1}, 
and qpp formulas over a core can be replaced by equivalent pp formulas. Therefore, 
for the purpose of studying complexity
of CSP for constraint languages consisting of 
multi-sorted relations on a two-element set, it suffices to assume that $\Gamma$ consists only of canonical relations. For more information about CSP and its connection with clones, see \cite{Barto2017}.
\item Evaluating a quantified conjunctive sentence is another important problem in computer science,  called \textit{quantified} CSP \cite{QC2017}. 
Similarly to the~non-quantified case, the complexity depends only on the quantified relational clones generated by the constraint language $\Gamma$ \cite{BBCJK}, and therefore we may assume that
all the relations in $\Gamma$ are canonical. As canonical relations are very simple and there are only few of them, this substantially simplifies the task of characterizing the complexity for all sets of multi-sorted relations on a two-element domain.

\end{enumerate}

\subsection{Structure of the paper}

The paper is organized as follows.

In Section~\ref{sec:definitions}, we fix notation and provide all necessary preliminary definitions. In particular, we formally introduce multi-sorted (operational) clones, relational clones, and quantified relational clones, and recall the standard notions of polymorphisms and invariant relations that will be essential throughout the text.

Section~\ref{sec:canpreds} is dedicated to canonical relations. We explicitly define canonical relations, state their properties, and outline the reasoning why each quantified relational clone on a two-element domain can be generated using only canonical relations.

In Section~\ref{sec:closednessandpost}, we introduce elementary operations on relations. These operations provide explicit and convenient closedness criteria for quantified relational clones. As an immediate consequence, we obtain a simplified proof of Post's lattice theorem.

In Section~\ref{sec:lattice}, we investigate the structure of the lattice of multi-sorted (quantified) relational clones. We establish embeddings of this lattice into simpler and better-understood structures, and use these embeddings to study some properties of multi-sorted clones. In particular, we revisit the aforementioned result concerning finite basis of multi-sorted clones, simplifying its original proof by V. Taimanov.

Section~\ref{sec:Galois} contains a rigorous treatment of the Galois connection between clones of multi-sorted surjective operations and quantified relational clones, thus generalizing classical single-sorted results.

Finally, Section~\ref{sec:defproofs} provides detailed proofs of technical lemmata stated in previous sections.

\section{Main definitions and preliminaries}\label{sec:definitions}
We begin by fixing some notation and defining some key concepts that will be used throughout the paper. Some of the definitions are adapted from \cite{ZhukKeyRelations}.

\begin{convention}[Main conventions]
Throughout the text, we use boldface to denote tuples.

By $\N$ we denote the set of natural numbers without $0$. By $\N_0$, we denote the set $\N\cup\{0\}$. For $k\in\N$ by $E_k$ we denote the set $\{1,\dots,\,k\}$.

For a set $A$ and $n\in\N$, by $2^A$, we denote the power set of $A$, and by~$A^n$, we denote the $n$-th cartesian power of $A$.

If a function has many arguments, a tuple has many entries, or a set has many elements, we may sometimes write them on multiple lines enclosed in parentheses; e.g., the following notations are equivalent:
\begin{equation*}
    f\left(\begin{array}{@{\,}l@{\,}}x_1,\dots,\,x_n,\\
    y_1,\dots,\,y_m\end{array}\right)\quad\text{and}\quad f(x_1,\dots,\,x_n,y_1,\dots,\,y_m)\,.
\end{equation*}

We use the standard notation for logical connectives. The symbols $\top$ and $\bot$ represent \textit{true} and \textit{false}, respectively. Finally, we assume that conjunctions ($\land$) and disjunctions ($\lor$) take precedence over quantifiers, allowing us to omit parentheses when the grouping is unambiguous. For example, we write
\begin{equation*}
    \exists y \, \rho(x,\,y)\land \sigma(w,\,z,\,y) \quad\text{instead of}\quad \exists y \, \left(\rho(x,\,y)\land \sigma(w,\,z,\,y)\right)\,.
\end{equation*}
\end{convention}
\begin{definition}[Operations, clones]\label{def:operations}
Let $A$ be a~nonempty finite set and $k,\,n\in\N$. By a~\textit{$k$-operation} of \textit{arity} $n$, we denote a~$k$-tuple of functions $A^n\to A$. Let $\OO_{A,n}^k$ be the~set of all $n$-ary $k$-operations on $A$ and let $\OO^k_A = \bigcup_{m\,\in\,\N}\OO_{A,m}^k$ be the set of all finitary $k$-operations on $A$. For a~$k$-operation $\boldsymbol{f}\in \OO_{A,n}^k$, the corresponding tuple of functions is $(\boldsymbol{f}^{(1)},\,\boldsymbol{f}^{(2)},\dots,\, \boldsymbol{f}^{(k)})$. We say that a~$k$-operation $\boldsymbol{f}$ is surjective if $\boldsymbol{f}^{(i)}$ is a surjective operation for every $i\in E_k$.

We define a~\textit{composition} of $k$-operations in a~natural way. For $\boldsymbol{f}\in\OO_{A,m}^k$ and $\boldsymbol{g}_1,\dots,\,\boldsymbol{g}_m\in\OO_{A,n}^k$, we define a new $k$-operation $\boldsymbol{h}\in\OO_{A,n}^k$ by putting
\begin{equation*}
    \boldsymbol{h}^{(i)}(x_1,\dots,\,x_n) \coloneqq \boldsymbol{f}^{(i)}(\boldsymbol{g}_1^{(i)}(x_1,\dots,\,x_n),\dots,\,\boldsymbol{g}_m^{(i)}(x_1,\dots,\,x_n))
\end{equation*}
for every $i\in E_k$. We sometimes use the shorter notation
\begin{equation*}
    \boldsymbol{h}(x_1,\dots,\,x_n) = \boldsymbol{f}(\boldsymbol{g}_1(x_1,\dots,\,x_n),\dots,\,\boldsymbol{g}_m(x_1,\dots,\,x_n))\,.
\end{equation*}

A~$k$-\textit{clone} on $A$ is a set of $k$-operations closed under composition, which, for each $n\in\N$ and $i\in E_n$, contains the \textit{projection $k$-operations} (or \textit{$k$-projections}) $\boldsymbol{p}_{n,i}^k$ where
\begin{equation*}
    {\boldsymbol{p}_{n,i}^k}^{(j)}(x_1,\dots,\,x_n) = x_i
\end{equation*}
for all $j\in E_k$. Finally, for a set $\mathcal{F}\subseteq \OO^k_A$ we define $\Clo^k\mathcal{F}$ as the smallest $k$-clone on $A$ containing~$\mathcal{F}$ and we put $\sClo^k\mathcal{F} \coloneqq \{\boldsymbol{f}\in\Clo^k \mathcal{F}\mid \boldsymbol{f}\text{ is surjective}\}$. Any set $\mathcal{C}\subseteq\OO_A^k$ satisfying $\sClo^k\mathcal{C} = \mathcal{C}$ is called a \textit{surjective clone}.
\end{definition}
\begin{definition}[Relations, relational clones]\label{def:relations}
A~\textit{predicate} on a~nonempty finite set $A$ is a~mapping $A^n \to \{\top,\,\bot\}$ where $n\in\N_0$ is called the~\textit{arity} of the~predicate. Let $k\in\N$. A~$k$-\textit{sorted} predicate is a~predicate in which every variable has a~sort from the~set $E_k$. When necessary, we denote the sorts of variables by superscript. By $R_A^k$ we denote the set of all $k$-sorted predicates on~$A$.

By $\sigma_\bot$, we denote the predicate of arity $0$, which takes the value $\bot$. Furthermore, for each $i\in E_k$, we define the predicate
\begin{equation*}
    \sigma_{=}^{i}(x,\,y) = \top \quad\iff\quad x=y\,,
\end{equation*}
where $x$ and $y$ are variables of the $i$-th sort.

Suppose $S\subseteq R_A^k$; then by $\pp[k]{S}$, we denote the set of all $k$-sorted predicates $\rho$ such that
\begin{equation*}
    \rho(x_1,\dots,\,x_n) = \exists y_1\dots \exists y_l\, \rho_1(v_{1,1},\dots,\,v_{1,n_1})\land\dots\land\rho_m(v_{m,1},\dots,\,v_{m,n_m})\,,
\end{equation*}
where
\begin{equation*}
    \rho_1,\dots,\,\rho_m\in S\cup\{\sigma_\bot,\,\sigma_{=}^1,\dots,\,\sigma_{=}^k\}
\end{equation*}
and $v_{i,j}\in\{x_1,\dots,\,x_n,\,y_1,\dots,\,y_l\}$ are variable symbols that are subject to the following restriction: if a variable is substituted in some predicate as a variable of the $i$-th sort, then this variable cannot be substituted in any predicate as a variable of another sort. That is, $\rho$ is defined by a~first-order formula that uses only predicates in $S\cup\{\sigma_\bot,\,\sigma_{=}^1,\dots,\,\sigma_{=}^k\}$, conjunction, and existential quantification, with the sorts of all variables adhering to the described condition -- such formula is called a~\textit{sorted primitive positive formula} (shortly, pp formula).

Equivalently, we define a closure under \textit{sorted quantified primitive positive formulas} (shortly, qpp formulas): by $\qpp[k]{S}$, we denote the set of all predicates defined from elements of $S$ by a first-order formula that uses only predicates in $S\cup\{\sigma_\bot,\,\sigma_{=}^1,\dots,\,\sigma_{=}^k\}$, conjunction, existential quantification, and universal quantification, with the sorts of all variables, once again adhering to the described condition.

A~set $S\subseteq R^k_A$ is called
\begin{itemize}
    \item $k$-sorted \textit{relational clone} if $\pp[k]{S}=S$;
    \item $k$-sorted \textit{quantified relational clone} if $\qpp[k]{S} = S$.
\end{itemize}

For simplicity, we usually omit brackets and write $\pp[k]{\rho_1,\dots,\,\rho_n}$ and $\qpp[k]{\rho_1,\dots,\,\rho_n}$ instead of $\pp[k]{\{\rho_1,\dots,\,\rho_n\}}$ and $\qpp[k]{\{\rho_1,\dots,\,\rho_n\}}$ respectively.

We say that two predicates are \textit{similar} if they differ only by an order of variables.

Finally, we do not distinguish between $n$-ary predicates and $n$-ary \textit{relations}, that is, subsets of $A^n$ with coordinates of corresponding sorts. For a~relation $\rho$ and a~tuple $\boldsymbol{a}$, notations $\rho(\boldsymbol{a}) = \top$ and $\boldsymbol{a} \in \rho$ are equivalent. We will mostly use the relational formalism, as it is arguably more established; however, we will occasionally also rely on the predicate formalism for the sake of precision.
\end{definition}
\begin{convention}
In some limited cases, like $\OO_A^k$ or $R_A^k$, we use the superscript as a part of a fixed symbol to denote the ``sorting'' of operations or relations instead of cartesian powers. These symbols will always be defined before they are used.
\end{convention}
\begin{remark}
It is also common \cite{BulatovMultisorted2003,Zhuk2020} to define multi-sorted relations and operations by considering distinct domains for each sort. In this approach, a~multi-sorted operation of arity $n$ consists of a tuple of functions $f_i: A_i^n \to A_i$, while a multi-sorted relation is a subset of $A_1 \times \dots \times A_k$, where $A_1, \dots, A_k$ are finite but possibly different domains. However, these definitions are essentially equivalent to our setting, as any such operation can be viewed as an element of $\OO_{A_1\, \cup\, \cdots\, \cup\, A_k}^k$, and any such relation can be viewed as an element of $R_{A_1\, \cup\, \cdots\, \cup\, A_k}^k$.
\end{remark}
\begin{definition}[Polymorphisms, invariant relations]
Let $k\in\N$ and let $\rho$ be a~$k$-sorted relation of arity $n$ on a nonempty finite set $A$ and let $r_i$ be the sort of its $i$-th variable for every $i\in E_n$. We say that an $m$-ary $k$-operation $\boldsymbol{f}$ on $A$ \textit{preserves}~$\rho$ if
\begin{equation*}
    \boldsymbol{f} \left(\!\begin{pmatrix}
    a_{1,1} \\
    a_{2,1} \\
    \vdots\\
    a_{n,1}
    \end{pmatrix},\,\begin{pmatrix}
    a_{1,2}\\
    a_{2,2}\\
    \vdots \\
    a_{n,2}
    \end{pmatrix},\dots,\,\begin{pmatrix}
    a_{1,m} \\
    a_{2,m} \\
    \vdots \\
    a_{n,m}
    \end{pmatrix}\!\right)\coloneqq \begin{pmatrix}
    \boldsymbol{f}^{(r_1)}(a_{1,1},\, a_{1,2}, \dots,\, a_{1,m}) \\
    \boldsymbol{f}^{(r_2)}(a_{2,1},\, a_{2,2}, \dots,\, a_{2,m}) \\
    \vdots \\
    \boldsymbol{f}^{(r_n)}(a_{n,1},\, a_{n,2}, \dots,\, a_{n,m})
    \end{pmatrix} \in \rho
\end{equation*}
for any $(a_{1,1},\dots,\,a_{n,1}),\dots,\,(a_{1,m},\dots,\,a_{n,m})\in\rho$. Based on this notion, we define the following mappings between the sets of operations and the sets of relations:
\begin{align*}
    \Pol^k S &\coloneqq \{\boldsymbol{f}\in\OO_A^k \mid \forall \rho\in S:\boldsymbol{f}\textrm{ preserves }\rho\}\,, \\
    \sPol^k S &\coloneqq \{\boldsymbol{f}\in\OO_A^k \mid (\forall \rho\in S:\boldsymbol{f}\textrm{ preserves }\rho)\text{ and }(\boldsymbol{f}\textrm{ is surjective})\}\,, \\
    \Inv^k \mathcal{F} &\coloneqq \{\rho\in R_A^k \mid \forall \boldsymbol{f}\in\mathcal{F}:\boldsymbol{f}\textrm{ preserves }\rho\}\,,
\end{align*}
for any $S\subseteq R_A^k$ and $\mathcal{F}\subseteq \OO_A^k$. The images of these mappings are respectively called \textit{polymorphisms},\textit{ surjective polymorphisms}, and \textit{invariant relations}.
\end{definition}
\begin{convention}
Whenever the meaning is clear from the context, we simplify the terminology by omitting the ``$k$-sorted'' and refer simply to \textit{relations}, \textit{relational clones}, and \textit{quantified relational clones}.

Similarly, we will simplify the notation of operators $\Clo^k$, $\sClo^k$, $\pp[k]{-}$, $\qpp[k]{-}$, $\Pol^k$, $\sPol^k$, $\Inv^k$ by omitting the $k$ in superscript as it does not cause any ambiguity.
\end{convention}

Independently studied in \cite{Geiger1968} and \cite{Bodnarchuk1969I,Bodnarchuk1969II}, there is a well-known Galois connection between the sets of $1$-sorted relations and the sets of $1$-operations given by the mappings $\Pol$ and $\Inv$ where the closed sets are relational clones (pp closed sets of relations) and clones. Later in \cite{Romov1973} it was proved that this result also extends to $k$-sorted relations and $k$-operations.

Similarly, there is a Galois connection between the sets of $1$-sorted relations and $1$-operations given by the mappings $\sPol$ and $\Inv$. It was proved in \cite{Borner2009} that the closed sets of this Galois connection are quantified relational clones and surjective clones. As with the standard $\Pol$-$\Inv$ Galois connection, we may extend this result for $k$-sorted relations and $k$-operations. We will prove the following general statement in Section \ref{sec:Galois}.
\begin{theorem}\label{thm:Galois}
Let $k\in\N$ and let $A$ be a~nonempty finite set. For all $\mathcal{F}'\subseteq\mathcal{F}\subseteq \OO_A^k$ and all $S'\subseteq S \subseteq R_A^k$, we have
\begin{align*}
    \Inv \mathcal{F} &\subseteq \Inv\mathcal{F}'\,, & \mathcal{F}&\subseteq\sPol\Inv\mathcal{F}\,, & \Inv\sPol\Inv\mathcal{F} &= \Inv\mathcal{F}\,, \\
    \sPol S &\subseteq \sPol S'\,, & S&\subseteq\Inv\sPol S\,, & \sPol\Inv\sPol S &= \sPol S\,,
\end{align*}
and
\begin{align*}
    \sPol\Inv \mathcal{F} &= \sClo\mathcal{F}\,,\\
    \Inv\sPol S &= \qpp{S}\,.
\end{align*}
\end{theorem}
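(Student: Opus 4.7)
The easy parts -- monotonicity, extensivity, and the three absorption identities in the first two lines -- follow by routine manipulations. The antimonotonicity inclusions $\Inv\mathcal{F}\subseteq\Inv\mathcal{F}'$ and $\sPol S\subseteq\sPol S'$ are immediate, since enlarging the indexing set can only shrink the intersection of preservation conditions. The extensivity inclusions $\mathcal{F}\subseteq\sPol\Inv\mathcal{F}$ (meaningful precisely when $\mathcal{F}$ consists of surjective $k$-operations, since $\sPol$ ignores non-surjective ones) and $S\subseteq\Inv\sPol S$ follow directly from the definitions. The three absorption identities $\Inv\sPol\Inv\mathcal{F}=\Inv\mathcal{F}$ and $\sPol\Inv\sPol S=\sPol S$ then follow by the standard two-step Galois argument: applying the antitone $\Inv$ (or $\sPol$) to an extensivity inclusion yields one direction, and applying extensivity with $\Inv\mathcal{F}$ (or $\sPol S$) in place of the original set yields the other.

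For the identity $\sPol\Inv\mathcal{F}=\sClo\mathcal{F}$, the plan is to reduce to the classical multi-sorted Pol-Inv Galois connection \cite{Romov1973}, which asserts $\Pol\Inv\mathcal{F}=\Clo\mathcal{F}$. Both inclusions are then immediate. For $\sClo\mathcal{F}\subseteq\sPol\Inv\mathcal{F}$: any $\boldsymbol{f}\in\sClo\mathcal{F}$ lies in $\Clo\mathcal{F}=\Pol\Inv\mathcal{F}$ and is surjective by definition, so $\boldsymbol{f}\in\sPol\Inv\mathcal{F}$. Conversely, any $\boldsymbol{f}\in\sPol\Inv\mathcal{F}$ lies in $\Pol\Inv\mathcal{F}=\Clo\mathcal{F}$ and is surjective, hence in $\sClo\mathcal{F}$.

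The core content is $\Inv\sPol S=\qpp{S}$. I plan to prove the inclusion $\qpp{S}\subseteq\Inv\sPol S$ by structural induction on the defining qpp-formula. The base case ($\rho\in S\cup\{\sigma_\bot,\sigma_{=}^1,\dots,\sigma_{=}^k\}$) and the inductive steps for conjunction and existential quantification are handled exactly as in the classical pp-case and require nothing about surjectivity. The new ingredient is the universal-quantifier step: if $\rho(\boldsymbol{x})\equiv\forall y\,\sigma(\boldsymbol{x},y)$ with $y$ of sort $r$ and $\boldsymbol{f}$ is a surjective $m$-ary $k$-operation preserving $\sigma$, then for any $\boldsymbol{a}_1,\dots,\boldsymbol{a}_m\in\rho$ and arbitrary $y\in A$, I invoke surjectivity of $\boldsymbol{f}^{(r)}$ to pick $y_1,\dots,y_m$ with $\boldsymbol{f}^{(r)}(y_1,\dots,y_m)=y$. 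Each $(\boldsymbol{a}_i,y_i)$ lies in $\sigma$ (since $\boldsymbol{a}_i\in\rho$), and applying $\boldsymbol{f}$ componentwise together with preservation of $\sigma$ delivers $(\boldsymbol{f}(\boldsymbol{a}_1,\dots,\boldsymbol{a}_m),y)\in\sigma$. As $y$ was arbitrary, $\boldsymbol{f}(\boldsymbol{a}_1,\dots,\boldsymbol{a}_m)\in\rho$.

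The hard direction $\Inv\sPol S\subseteq\qpp{S}$ is where I expect the bulk of the work, and the plan is to adapt the single-sorted argument of B\"orner \cite{Borner2009} to the multi-sorted setting. Given $\rho\in\Inv\sPol S$ of arity $n$, I will first pass to the pp-saturation $\rho^+$, the smallest element of $\pp{S}\subseteq\qpp{S}$ containing $\rho$ -- equivalently, the closure of $\rho$ under $\Pol S$, whose pp-definability is supplied by the classical multi-sorted Pol-Inv theorem. Since $\rho$ is closed only under \emph{surjective} polymorphisms, the gap $\rho^+\setminus\rho$ consists precisely of tuples reachable from $\rho$ via genuinely non-surjective polymorphisms of $S$. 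These defects I plan to isolate by auxiliary pp-definable relations obtained from an indicator construction over an enumeration of $A^n$, and then remove them using a universal quantifier, thereby producing a qpp-definition of $\rho$ from $S$. The principal technical obstacle will be verifying that the universal quantifier cuts out exactly the tuples of $\rho^+\setminus\rho$ -- no more and no fewer -- and that the multi-sorted bookkeeping (tracking sorts of universally quantified variables and of the tuples in the indicator) does not interfere with the construction; the multi-sortedness itself should contribute only notational overhead to the single-sorted argument.
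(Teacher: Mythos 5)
Your treatment of the routine Galois identities, your derivation of $\sPol\Inv\mathcal{F}=\sClo\mathcal{F}$ from the multi-sorted $\Pol$--$\Inv$ theorem (the paper reproves $\Pol\Inv\mathcal{F}=\Clo\mathcal{F}$ via the indicator relation $\widetilde{\Clo\mathcal{F}}_n$ instead of citing \cite{Romov1973}, but the reduction is the same), and your induction showing $\qpp{S}\subseteq\Inv\sPol S$ -- in particular the universal-quantifier step using surjectivity of $\boldsymbol{f}^{(r)}$ to lift an arbitrary value $y$ to preimages $y_1,\dots,y_m$ -- are all correct and essentially coincide with the paper's Observation~\ref{thm:galoisproperty}, Theorem~\ref{thm:polinv} and Lemma~\ref{thm:invisrelclo}.

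The genuine gap is the hard inclusion $\Inv\sPol S\subseteq\qpp{S}$, which is the core of the theorem and for which you give only a plan -- and the plan, as stated, would not work. You propose to saturate $\rho$ itself to $\rho^{+}$, ``isolate'' the defect tuples $\rho^{+}\setminus\rho$ by auxiliary pp-definable relations, and then ``remove'' them with a universal quantifier. But qpp formulas are monotone conjunctive: there is no mechanism for subtracting a set of tuples from a pp-definable relation, and any auxiliary relation that pinpointed exactly $\rho^{+}\setminus\rho$ would have to be (q)pp-definable from $S$, which is circular. The actual argument (\cite{Borner2009}, and the paper's Theorem~\ref{thm:invspol}) never forms $\rho^{+}$ at all. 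Given $\tau\in\Inv\sPol S$ of arity $n$, one pads $\tau$ with $|A|$ fresh \emph{dummy} variables of \emph{each} of the $k$ sorts, obtaining $\rho$, takes the least relation $\rho'\in\pp{S}$ containing $\rho$ (this uses the multi-sorted $\Inv\Pol S=\pp{S}$, Corollary~\ref{thm:invpol}, together with Lemma~\ref{thm:leastrel}, which describes $\rho'$ as $\{\boldsymbol{f}(\boldsymbol{b}_1,\dots,\boldsymbol{b}_m)\mid\boldsymbol{b}_i\in\rho,\ \boldsymbol{f}\in\Pol S\}$), and then universally quantifies the $k\cdot|A|$ padding variables to get $\tau'\in\qpp{S}$ with $\tau\subseteq\tau'$. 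The crux is $\tau'\subseteq\tau$: for $\boldsymbol{a}\in\tau'$, specialize the universally quantified padding to values that enumerate all of $A$ within every sort; the witnessing polymorphism $\boldsymbol{f}\in\Pol S$ supplied by Lemma~\ref{thm:leastrel} must then map onto $A$ in every coordinate, hence $\boldsymbol{f}\in\sPol S$, and $\boldsymbol{a}\in\tau$ because $\tau\in\Inv\sPol S$. So surjectivity is \emph{manufactured} by the extra coordinates rather than detected by isolating bad tuples; note also that one needs a block of $|A|$ padding variables \emph{per sort} so that every component $\boldsymbol{f}^{(j)}$ is forced to be surjective -- this is exactly where the multi-sorted bookkeeping enters, so it is slightly more than the ``notational overhead'' you anticipate, though still straightforward once the construction is in place.
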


Even though this theorem holds for an arbitrary finite set, in the rest of the paper, we will only consider the~two-element domain $A = \{0,\,1\}$.

A particular consequence of this theorem that will be useful in Section~\ref{sec:lattice} is that the mappings $\Inv$ and $\sPol$ give mutually inverse, order-reversing bijections between the sets of surjective clones and the sets of quantified relational clones (ordered by inclusion).

Next, we introduce a class of relations that will serve as a~convenient tool for characterizing (quantified) relational clones.

\begin{definition}[Linear equations]
An expression
\begin{equation*}
    a_1 x_1 + \dots + a_n x_n = b
\end{equation*}
where $a_1,\dots,\,a_n,\,b\in\{0,\,1\}$, $x_1,\dots,\,x_n$ are variables, and addition and multiplication are performed modulo $2$ is called a~\textit{linear equation}. Fixing the~order of variables, any linear equation may be viewed as a predicate that evaluates to $\top$ if and only if the equation is satisfied. We say that a relation (/predicate) $\rho$ may be represented as a~\textit{disjunction of linear equations} if and only if
\begin{equation*}
    \rho(x_1,\dots,\,x_n) = \disj{\begin{array}{c}
        \lambda_1(x_1,\dots,\,x_n)  \\
        \vdots \\
        \lambda_m(x_1,\dots,\,x_n)
    \end{array}} \coloneqq \lambda_1(x_1,\dots,\,x_n)\lor \dots\lor \lambda_m(x_1,\dots,\,x_n)
\end{equation*}
for some linear equations $\lambda_1,\dots,\,\lambda_m$. The notation $\disj{-}$ will henceforth denote a relation formed as the disjunction of the listed relations, as shown above.

By $\KR^k$, we denote the set of all relations from $R_{\{0,\,1\}}^k$ that may be represented as disjunctions of linear equations.
\end{definition}
The notation $\KR^k$ comes from the fact that these relations are precisely the \textit{key relations} on $\{0,\,1\}$; see \cite{ZhukKeyRelations} for details. It was shown in \cite[Section 2.2]{ZhukKeyRelations} that every relational clone is uniquely determined by the relations that may be represented as a~disjunction of linear equations.
\begin{theorem}\label{thm:KR}\cite[Section 2.2]{ZhukKeyRelations}
Let $k\in\N$ and $S\subseteq R_{\{0,\,1\}}^k$. Then
\begin{equation*}
    \pp{S} = \pp{\pp{S}\cap \KR^k}\,.
\end{equation*}
\end{theorem}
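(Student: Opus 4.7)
The plan is to establish the two inclusions separately. The direction $\pp{\pp{S}\cap\KR^k}\subseteq \pp{S}$ is immediate from $\pp{S}\cap\KR^k\subseteq \pp{S}$ together with monotonicity and idempotence of the closure operator $\pp{-}$.

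For the reverse inclusion, fix $\rho\in\pp{S}$ of arity $n$. Since the arity is finite and the domain is $\{0,\,1\}$, there are only finitely many relations of arity $n$ with the given sorting of variables. Hence, for each tuple $\boldsymbol{a}\in\{0,\,1\}^n\setminus\rho$, the nonempty finite collection $\{\sigma\in\pp{S} : \rho\subseteq\sigma,\ \boldsymbol{a}\notin\sigma\}$ admits a maximal element $\rho^{\boldsymbol{a}}$. The next step is to verify the identity
\[
\rho \;=\; \bigcap_{\boldsymbol{a}\,\notin\,\rho} \rho^{\boldsymbol{a}},
\]
where the inclusion $\subseteq$ is immediate and the inclusion $\supseteq$ follows because every $\boldsymbol{b}\notin\rho$ is excluded by $\rho^{\boldsymbol{b}}$. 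Since the right-hand side is a conjunction (i.e.\ a pp-formula) over relations in $\pp{S}$, it remains only to show that each $\rho^{\boldsymbol{a}}$ belongs to $\KR^k$.

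The maximality built into the construction forces $\rho^{\boldsymbol{a}}$ to be $\cap$-irreducible in $\pp{S}$: any $\sigma\in\pp{S}$ strictly containing $\rho^{\boldsymbol{a}}$ must contain $\boldsymbol{a}$, so the intersection of all such $\sigma$ is strictly larger than $\rho^{\boldsymbol{a}}$ as well. This $\cap$-irreducibility is inherited by the smaller relational clone $\pp{\rho^{\boldsymbol{a}}}$, so $\rho^{\boldsymbol{a}}$ is a \emph{key relation} in the sense of \cite{ZhukKeyRelations}. The proof therefore reduces to the following characterization lemma: every key relation on $\{0,\,1\}$ admits a representation as a disjunction of linear equations, i.e.\ lies in $\KR^k$.

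This last lemma is where I expect the main obstacle to lie, and it is precisely the Boolean key-relation theorem of \cite{ZhukKeyRelations}. To prove it directly, I would argue the contrapositive over $\mathbb{F}_2$: given a relation $\tau$ that is \emph{not} representable as a disjunction of linear equations, construct two relations $\sigma_1,\sigma_2\in\pp{\tau}$ with $\sigma_1,\sigma_2\supsetneq \tau$ and $\sigma_1\cap\sigma_2=\tau$, thereby contradicting $\cap$-irreducibility. The natural strategy is to look at the smallest disjunction of linear equations $\tilde\tau$ containing $\tau$; if $\tilde\tau\neq\tau$, then two distinct tuples in $\tilde\tau\setminus\tau$ (or two distinct ``affine directions'' in which $\tau$ fails to extend uniquely) yield two different minimal extensions of $\tau$ inside $\pp{\tau}$, providing the desired intersection decomposition. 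The affine-geometric arithmetic needed to carry this out cleanly is the technical heart of the argument and the one step that I would not attempt to grind out from scratch here.
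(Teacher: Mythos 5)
The paper does not prove this statement at all -- it is quoted directly from \cite[Section 2.2]{ZhukKeyRelations} -- so there is no in-paper proof to compare against; your reduction is essentially the standard argument underlying that citation. Your steps are sound: writing $\rho$ as $\bigcap_{\boldsymbol{a}\notin\rho}\rho^{\boldsymbol{a}}$ with each $\rho^{\boldsymbol{a}}$ maximal among same-arity relations of $\pp{S}$ avoiding $\boldsymbol{a}$, noting that maximality makes $\rho^{\boldsymbol{a}}$ $\cap$-irreducible in $\pp{S}$ and hence in $\pp{\rho^{\boldsymbol{a}}}$, and passing from $\cap$-irreducibility to being a key relation (via the fact that the least relation of $\pp{\rho^{\boldsymbol{a}}}$ containing $\rho^{\boldsymbol{a}}\cup\{\boldsymbol{b}\}$ is the image of $\rho^{\boldsymbol{a}}\cup\{\boldsymbol{b}\}$ under polymorphisms, yielding the required unary vector function). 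The one ingredient you leave unproved -- that every Boolean key relation is a disjunction of linear equations -- is exactly the theorem of \cite{ZhukKeyRelations} that the paper itself defers to, so deferring it is consistent with the paper; your closing sketch of how one might prove it is not yet a proof, but no gap arises relative to what the paper actually establishes.
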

In Section~\ref{sec:canpreds}, we introduce an even smaller class of relations that retains this property in the context of quantified relational clones.

We conclude this section with the definition of dummy variables and a~final convention to improve the overall clarity.
\begin{definition}
Let $A_1,\dots,\,A_n,\,B$ be arbitrary sets, and let $f: A_1\times \dots \times A_i \times \dots \times A_n \to B$ be some mapping. We say that the $i$-th variable of $f$ is \textit{dummy} if for all $(a_1,\dots,\,a_n)\in A_1\times\dots\times A_n$ and for all $a_i'\in A_i$, we have
\begin{equation*}
    f(a_1,\dots,\,a_{i-1},\, a_i,\,a_{i+1},\dots,\,a_n) = f(a_1,\dots,\,a_{i-1},\, a_i',\,a_{i+1},\dots,\,a_n)\,.
\end{equation*}
\end{definition}
Notice that this definition also applies to relations, since we view them as predicates, that is, mappings to $\{\top,\,\bot\}$. Thus, we may speak of relations with or without dummy variables.
\begin{convention}
Since it is often sufficient to consider relations up to an order of variables and/or up to dummy variables, we sometimes simplify the notation by not distinguishing between relations and the formulas that define them. For example, instead of writing:
\begin{equation*}
    \rho(x,\,y,\,z) \coloneqq (x=0\lor y+z=1)\,,\quad S\coloneqq \qpp{\rho}\,,
\end{equation*}
we use a more compact notation:
\begin{equation*}
    S\coloneqq\qpp{x=0\lor y+z=1}\,.
\end{equation*}
\end{convention}
\section{Canonical relations}\label{sec:canpreds}
We fix $k\in\N$ in this section. We define the following class of relations.
\begin{definition}\label{def:canonicalrelations}
A $k$-sorted relation without dummy variables is called \textit{canonical} if, up to an order of its variables, it may be expressed in one of the following ways:
\begin{enumerate}[label=\textup{(c\arabic*)},leftmargin=1.4\parindent]
    \item $x^i = 0 \lor y^i = 1$ \hspace*{\fill} for $i\in E_k$,
    \item $x^i = y^i \lor u^j = b$\hspace*{\fill} for $i,\,j\in E_k$, $b\in\{0,\,1\}$,
    \item $x^i = y^i \lor u^j = v^j$ and $x^i = y^i \lor y^i = z^i$ \hspace*{\fill} for $i,\,j\in E_k$, $i\neq j$,
    \item $x^i + y^i = 1$\hspace*{\fill} for $i\in E_k$,
    \item $x^i + y^i = u^j + v^j$\hspace*{\fill} for $i,\,j\in E_k$,
    \item $x^{s_1} + x^{s_2} + \dots + x^{s_n} = b$, where $n\geq 2$, $s_1,\dots,\,s_n\in E_k$ are pairwise distinct integers, and $b\in\{0,\,1\}$,
    \item \begin{equation*}
        \disj{\begin{array}{r@{\mskip\medmuskip}c@{\mskip\medmuskip}l}
        x_1^{s_1} = b_1 \lor x_2^{s_1} = b_1 \,\lor & \cdots & \lor\, x_{m_1}^{s_1} = b_1  \\
        x_1^{s_2} = b_2 \lor x_2^{s_2} = b_2 \,\lor & \cdots & \lor\, x_{m_2}^{s_2} = b_2 \\
        &\vdots& \\
        x_1^{s_n} = b_n \lor x_2^{s_n} = b_n \,\lor & \cdots & \lor\, x_{m_n}^{s_n} = b_n
    \end{array}}\,,
    \end{equation*}
    where $s_1,\dots,\,s_n\in E_k$ are pairwise distinct integers, and $b_1,\dots,\,b_n\in\{0,\,1\}$.
\end{enumerate}
By $\CR^k\subseteq \KR^k$, we denote the set of all $k$-sorted canonical relations.
\end{definition}
Theorem \ref{thm:KR} offers a~convenient way to describe relational clones on a~two-element domain. Analogously, the following theorems -- arguably the main results of this paper -- establish that canonical relations are sufficient to describe quantified relational clones and that they are, in some sense, the simplest relations with this property.
\begin{theorem}\label{thm:CR}
Let $S\subseteq R_{\{0,\,1\}}^k$. Then
\begin{equation*}
    \qpp{S} = \qpp{\qpp{S}\cap \CR^k}\,.
\end{equation*}
\end{theorem}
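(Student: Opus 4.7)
The inclusion $\qpp{\qpp{S}\cap \CR^k}\subseteq \qpp{S}$ is immediate, since $\qpp{S}\cap\CR^k\subseteq\qpp{S}$ and $\qpp{-}$ is a closure operator. For the nontrivial direction, set $T\coloneqq\qpp{S}$. Because every pp formula is a qpp formula, $T$ is pp-closed, so Theorem~\ref{thm:KR} yields $T=\pp{T\cap\KR^k}$. It therefore suffices to prove the intermediate claim that every key relation $\kappa\in T\cap\KR^k$ already lies in $\qpp{T\cap\CR^k}$: once this is known, we obtain $T=\pp{T\cap\KR^k}\subseteq\qpp{T\cap\CR^k}$, which is the required inclusion.

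By definition of $\KR^k$, such a $\kappa$ is a disjunction of linear equations, and I would prove the intermediate claim by induction on a suitable complexity measure of $\kappa$ -- for example the pair consisting of the number of variables and the number of disjuncts, ordered lexicographically. The base case amounts to recognizing the canonical families (c1)--(c7) among disjunctions of linear equations. For the inductive step, I would set up three reduction moves that rewrite $\kappa$ as a qpp-combination of strictly simpler relations still lying in $T\cap\KR^k$: first, if some variable $y$ of sort $i$ occurs in only one disjunct, it can be eliminated by an existential quantifier, shortening the disjunction; second, if the disjuncts decompose into two groups whose variables interact only in a controlled way, $\kappa$ is a qpp-combination of two proper sub-disjunctions; third, otherwise, a Gaussian rearrangement of the defining linear equations together with a universal quantifier over a freshly introduced sort-$i$ auxiliary variable collapses two disjuncts into a single shorter one. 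Iterating these moves, each step strictly decreases the complexity measure and so the process must terminate in a qpp-combination of canonical relations.

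The principal obstacle is ensuring that every intermediate relation produced remains inside $T$ and inside $\KR^k$. Because we work with $\qpp{-}$ rather than $\pp{-}$, each reduction must be a genuine qpp-equivalence -- the simpler pieces must collectively reconstruct $\kappa$ -- and each of those pieces must itself be a disjunction of linear equations in order for the induction hypothesis to apply. The most delicate cases are those in which $\kappa$ mixes several sorts together with non-trivial linear combinations of variables, which is precisely where the canonical forms (c5)--(c7) originate; here the sort structure must be peeled off one sort at a time via universal quantifiers, while carefully avoiding dummy variables and forbidden identifications across sorts. A complementary difficulty is verifying, for each of the seven canonical families, that none of the reduction moves applies to it, so that the induction genuinely terminates on canonical relations instead of looping.
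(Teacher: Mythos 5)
Your overall skeleton matches the paper's: the easy inclusion, then Theorem~\ref{thm:KR} applied to $T=\qpp{S}$, reducing the theorem to the claim that every $\kappa\in\qpp{S}\cap\KR^k$ is qpp-generated by canonical relations lying in $\qpp{S}$. From that point on, however, your argument is a plan rather than a proof, and the plan contains a concrete error. Your first reduction move is wrong as stated: if a variable $y$ occurs (with coefficient $1$) in exactly one disjunct of a disjunction of linear equations, existentially quantifying $y$ does not ``shorten the disjunction'' --- it makes that disjunct satisfiable for every assignment of the remaining variables, so the result is the full relation, from which $\kappa$ cannot be reconstructed and which is useless for the induction. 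The operation that deletes a disjunct is \emph{universal} quantification (this is exactly how the paper extracts the single equations $\lambda_i$ from a key relation), and the genuinely hard direction is the converse: showing that the simpler pieces both lie in $\qpp{\kappa}$ and qpp-define $\kappa$ back. You name this as ``the principal obstacle'' and then leave it unresolved, but it is precisely the mathematical content of the theorem; without it the second and third ``moves'' are not specified precisely enough to verify that the intermediate relations stay in $\qpp{S}\cap\KR^k$, that they jointly reconstruct $\kappa$, or that the process terminates on canonical relations.

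For comparison, the paper avoids a termination argument on an ad hoc complexity measure altogether. It first puts $\kappa$ into a Gauss--Jordan normal form (Lemma~\ref{thm:gaussrearrangement}, using that every relation in $\KR^k$ is the complement of an affine space), then proves one nontrivial decomposition once and for all: Lemma~\ref{thm:decomposition} shows that the ``disjunction of equalities and constants'' relation $\sigma$ is pp-definable from $\rho$ (via an explicit pp formula with a conjunction over all tuples in $\{1,\,2\}^m$), that each single equation $\lambda_i$ arises from $\rho$ by universal quantification, and that $\rho$ is recovered from $\sigma,\lambda_1,\dots,\lambda_m$ by compositions; Lemmas~\ref{thm:canon45}, \ref{thm:canon2}, \ref{thm:canon3}, \ref{thm:disjeq01}, and \ref{thm:canon12} then reduce $\sigma$ and the $\lambda_i$ to the canonical types (c1)--(c7). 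Your proposal would need analogues of all of these equivalences, proved in both directions, before it could be accepted; as it stands, the reduction to key relations is correct but the key step is missing and one of the proposed rewriting moves is invalid.
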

\begin{theorem}\label{thm:propCR}
If $\rho\in\CR^k$, then
\begin{enumerate}
    \item there is no $\rho'\in\qpp{\rho}$ of lower arity than $\rho$ such that $\qpp{\rho'} = \qpp{\rho}$;
    \item for all $n\in\N$ and $\sigma_1,\,\sigma_2,\dots,\,\sigma_n\in\qpp{\rho}$ such that
    \begin{equation*}
        \forall i\in E_n\quad \qpp{\sigma_i} \subsetneq \qpp{\rho}\,,
    \end{equation*}
    we have $\qpp{\sigma_1,\,\sigma_2,\dots,\,\sigma_n}\subsetneq\qpp{\rho}$\,.
\end{enumerate}
\end{theorem}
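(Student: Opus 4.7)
The plan is to use the Galois connection of Theorem~\ref{thm:Galois} to treat both parts uniformly, working in the lattice of surjective clones. For each canonical relation $\rho$ I aim to exhibit a~single surjective $k$-operation $\boldsymbol{f}_\rho$ with three properties: \textup{(a)}~$\boldsymbol{f}_\rho\notin\sPol\{\rho\}$; \textup{(b)}~every surjective $k$-operation $\boldsymbol{g}\notin\sPol\{\rho\}$ satisfies $\boldsymbol{f}_\rho\in\sClo(\sPol\{\rho\}\cup\{\boldsymbol{g}\})$; and \textup{(c)}~$\boldsymbol{f}_\rho$ preserves every relation in $\qpp{\rho}$ of arity strictly smaller than the arity of $\rho$.

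Properties~(a) and~(b) will give part~(2). Indeed, if each $\qpp{\sigma_i}$ is a~strict subclone of $\qpp{\rho}$, then by Theorem~\ref{thm:Galois} each $\sPol\{\sigma_i\}$ strictly contains $\sPol\{\rho\}$, so we can pick some $\boldsymbol{g}_i\in\sPol\{\sigma_i\}\setminus\sPol\{\rho\}$; since $\sPol\{\sigma_i\}$ is a~surjective clone containing $\sPol\{\rho\}\cup\{\boldsymbol{g}_i\}$, property~(b) forces $\boldsymbol{f}_\rho\in\sPol\{\sigma_i\}$ for every~$i$. Hence $\boldsymbol{f}_\rho\in\bigcap_i\sPol\{\sigma_i\}=\sPol\{\sigma_1,\dots,\sigma_n\}$, and then~(a) together with Theorem~\ref{thm:Galois} yields $\rho\notin\qpp{\sigma_1,\dots,\sigma_n}$, so $\qpp{\sigma_1,\dots,\sigma_n}\subsetneq\qpp{\rho}$. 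Properties~(a) and~(c) give part~(1): for any lower-arity $\rho'\in\qpp{\rho}$, property~(c) gives $\boldsymbol{f}_\rho\in\sPol\{\rho'\}$ while~(a) gives $\boldsymbol{f}_\rho\notin\sPol\{\rho\}$, so $\rho\notin\qpp{\rho'}$ and hence $\qpp{\rho'}\neq\qpp{\rho}$.

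The bulk of the proof is thus the construction and verification of $\boldsymbol{f}_\rho$ in each of the seven families (c1)--(c7). For types (c1)--(c4) the structure of $\rho$ essentially forces~$\boldsymbol{f}_\rho$: it is a~low-arity operation that flips a~single value on a~single sort, of negation-, constant- or projection-like type depending on which literals appear in~$\rho$. For the parity relations (c5) and~(c6), $\boldsymbol{f}_\rho$ is a~binary operation that violates linearity on exactly one sort. For~(c7), $\boldsymbol{f}_\rho$ acts on a~single sort~$s_j$ so as to collapse two equal literals in the $j$-th block of the disjunction. Property~(c) is then checked using that lower-arity relations in $\qpp{\rho}$ arise from~$\rho$ by identifying variables of the same sort, substituting constants, or quantifying variables out, and each such reduction can be shown to be preserved by the chosen~$\boldsymbol{f}_\rho$.

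The main obstacle is case~(c7), where both the number of sorts $n$ and the block lengths $m_1,\dots,m_n$ are unbounded, so property~(b) cannot be verified by a~finite enumeration of violating operations~$\boldsymbol{g}$. Here I plan to use the elementary-operation machinery of Section~\ref{sec:eo} to reduce an arbitrary violating~$\boldsymbol{g}$ to a~normal form modulo $\sPol\{\rho\}$, and then analyse finitely many patterns in order to recover $\boldsymbol{f}_\rho$ from this normal form. The detailed case-by-case calculations fit naturally into Section~\ref{sec:defproofs} alongside the paper's other technical lemmata.
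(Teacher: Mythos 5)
Your dualization through Theorem~\ref{thm:Galois} is logically sound: if an operation $\boldsymbol{f}_\rho$ with your properties (a)--(c) exists, then parts (1) and (2) do follow exactly as you argue. The problem is that you never produce such an operation, and its existence is not a lemma you can defer -- it \emph{is} the theorem. Property (b) says that every surjective clone strictly containing $\sPol\{\rho\}$ contains $\boldsymbol{f}_\rho$; under the anti-isomorphism of Theorem~\ref{thm:Galois} this is precisely the assertion that $\qpp{\rho}$ has a greatest proper quantified relational subclone, which is (a slightly strengthened form of) statement (2). So your plan replaces the claim to be proved by an equivalent operational claim, and the actual content -- constructing $\boldsymbol{f}_\rho$ for each of the families (c1)--(c7) and verifying (a), (b), (c) -- is only gestured at (``negation-, constant- or projection-like'', ``violates linearity on exactly one sort''), with no verification of (b) even in the easy cases; for (c7), where the arities are unbounded, you explicitly concede that you only ``plan'' a normal-form argument. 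As it stands, nothing beyond the reduction has been established.

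There is also a concrete inaccuracy in your route to property (c): lower-arity members of $\qpp{\rho}$ do \emph{not} all arise from $\rho$ by identifying variables, substituting constants, or quantifying; by Theorem~\ref{thm:eoconj} they are conjunctions of relations obtained via (eo1)--(eo5), and in particular compositions (eo4) of several copies of $\rho$ can merge linear equations into new ones, so preservation of these cannot be checked from single-copy reductions of $\rho$ alone. In effect, verifying (c) (and (b)) requires knowing what $\qpp{\rho}$ actually contains. That is exactly what the paper does instead: it computes $\qpp{\rho}\cap\KR^k$ explicitly for every canonical $\rho$ (Table~\ref{tab:qppcanon}, Theorem~\ref{thm:qppcanon}), reads off statement (1) from the low-arity members listed there, and for statement (2) determines $T_\rho=\{\sigma\in\qpp{\rho}\mid\qpp{\sigma}\subsetneq\qpp{\rho}\}$ and checks $\qpp{T_\rho}\subsetneq\qpp{\rho}$ -- a stronger conclusion than the finite case you target -- entirely on the relational side, using the elementary-operation calculus of Section~\ref{sec:eo}. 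To salvage your approach you would need either to carry out the clone-theoretic constructions and the normal-form analysis for arbitrary violating operations $\boldsymbol{g}$ in all seven families, or to import a description of $\qpp{\rho}$ of the kind given in Table~\ref{tab:qppcanon} -- at which point the relational argument is already available and shorter.
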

We divide the proof of Theorem \ref{thm:CR} into the following lemmata outlining the constructions. These lemmata are stated below without proofs and will be proved in Section \ref{sec:defproofscanon}, together with Theorem \ref{thm:propCR}.

\begin{lemma}\label{thm:gaussrearrangement}
For every relation $\rho'\in\KR^k$, there are constants $m,\,n,\,l\in\N_0$, and $p_i,\,q_j,\,r_h\in E_k$, $a_{i,j},\,b_i,\,c_h\in\{0,\,1\}$ for all $i\in E_m$, $j\in E_n$, $h\in E_l$ such that the relation $\rho$ defined by
\begin{equation*}
    \rho\!\left(\begin{array}{@{\,}l@{\,}}
        x_1^{p_1},\dots,\,x_m^{p_m}, \\
        y_1^{q_1},\dots,\,y_n^{q_n}, \\
        z_1^{r_1},\dots,\,z_l^{r_l}
    \end{array}\right) = \disj{\begin{array}{r@{\mskip\medmuskip}c@{\mskip\medmuskip}l}
        x_1^{p_1} & = & a_{1,1}y_1^{q_1} + \dots + a_{1,n}y_n^{q_n} + b_1 \\
        &\vdots&\\
        x_m^{p_m} & = & a_{m,1}y_1^{q_1} + \dots + a_{m,n}y_n^{q_n} + b_m\\
        z_1^{r_1} & = & c_1 \\
        & \vdots & \\
        z_l^{r_l} & = & c_l
    \end{array}}
\end{equation*}
is similar to $\rho'$.
\end{lemma}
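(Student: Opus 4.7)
The plan is to prove this lemma by a linear-algebraic decomposition of the complement of $\rho'$ in $\{0,\,1\}^N$, where $N$ is the arity of $\rho'$. Since $\rho' \in \KR^k$ is a disjunction of linear equations $\lambda_1 \vee \cdots \vee \lambda_M$, its complement $T := \neg\rho' = \bigcap_i \neg\lambda_i$ is the intersection of negated linear equations, hence (if nonempty) an affine subspace of $\mathbb{F}_2^N$. I would fix some $t_0 \in T$, write $T = t_0 + W$ for a linear subspace $W \subseteq \mathbb{F}_2^N$, and then read off the canonical form from the annihilator $W^\perp$.

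The key structural step is to partition the $N$ coordinates of $\rho'$ into three groups corresponding to the $x$-, $y$-, and $z$-variables of the target form. The $z$-variables are the \emph{fixed coordinates} of $T$: indices $i$ such that $e_i \in W^\perp$, equivalently, those on which all elements of $T$ agree; each such coordinate $z$ contributes the equation $z = c_z$, where $c_z$ is that common value. For the remaining coordinates I would form $U := W^\perp \cap \{\alpha : \alpha_z = 0 \text{ for all fixed } z\}$, a complement in $W^\perp$ of $\langle e_z : z \text{ fixed}\rangle$, and choose a basis of $U$ in reduced row echelon form. The pivot columns become the $x$-variables (yielding $m$ equations) and the remaining non-fixed columns become the $y$-variables. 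Each basis vector has the form $e_x + \sum_y a_{x,y}\, e_y$, which I interpret as the equation $x = \sum a_{x,y}\, y + b_x$ with $b_x := \alpha_x \cdot t_0 + 1$, chosen so that the equation is disjoint from $T$. The sorts $p_i, q_j, r_h$ are inherited directly from $\rho'$'s coordinates, requiring no further bookkeeping.

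Correctness then reduces to two clean verifications. First, each of the $m+l$ chosen equations is disjoint from $T$: its coefficient vector $\alpha$ lies in $W^\perp$, so $\alpha \cdot t$ is constant on $T$, and the constant term is chosen to disagree with this common value. Second, every $v \notin T$ satisfies at least one of the equations: $v - t_0 \notin W$, and since the chosen coefficient vectors form a basis of $W^\perp$, which separates all points outside $W$, some $\alpha_i$ satisfies $\alpha_i\cdot(v - t_0) = 1$, which is exactly the condition for $v$ to satisfy the $i$-th equation. Together these show the disjunction defines exactly $\neg T = \rho'$, up to the coordinate reordering that regroups variables into the three blocks.

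The main anticipated obstacle is the degenerate case $\rho' = \top$ with $N \geq 1$: then $T = \emptyset$ so the construction has no $t_0$ to fix, and moreover no relation of the stated canonical form can ever be a tautology (since every such relation has nonempty complement of size $2^n$). This case must be addressed either by a brief separate argument or by the convention that such tautologies lie outside the scope of the lemma; the rest of the construction is routine linear algebra over $\mathbb{F}_2$, with the $\mathrm{RREF}$ choice being what ensures the equations fit precisely the schema demanded in the conclusion.
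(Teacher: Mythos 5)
Your construction is essentially the paper's proof written out in full: the paper negates via De~Morgan to see that $\neg\rho'$ is an affine subspace and then applies Gauss--Jordan elimination, which is exactly your passage to a reduced-row-echelon basis of the annihilator $W^\perp$ (the fixed coordinates giving the $z$-block, the remaining pivots the $x$-block, the other non-fixed coordinates the $y$-block), so the two arguments coincide, with yours supplying the verification the paper leaves implicit. Two small remarks. First, in your description of the $z$-equations the constant $c_z$ must be the value \emph{disagreeing} with the common value of that coordinate on $T$, i.e.\ $c_z = e_z\cdot t_0 + 1$; as written ("that common value") the equation would contain $T$ rather than avoid it. Your later verification paragraph already says the constants are "chosen to disagree", so this is a slip in the earlier sentence, not a gap in the argument. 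Second, your point about tautologies is accurate: the full relation of positive arity does lie in $\KR^k$ (e.g.\ $x=0\lor x=1$) yet is not similar to any relation of the stated shape, and the paper's own one-line proof silently skips this case too; since no separate argument can produce such a representation, the honest resolution is the convention/exclusion you mention (which is harmless for the lemma's only use, as full relations belong to $\qpp{\emptyset}$ and hence to every quantified relational clone).
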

\begin{lemma}\label{thm:decomposition}
Suppose
\begin{align*}
    \rho\!\left(\begin{array}{@{\,}l@{\,}}
        x_1^{p_1},\dots,\,x_m^{p_m}, \\
        y_1^{q_1},\dots,\,y_n^{q_n}, \\
        z_1^{r_1},\dots,\,z_l^{r_l}
    \end{array}\right) &= \disj{\begin{array}{r@{\mskip\medmuskip}c@{\mskip\medmuskip}l}
        x_1^{p_1} & = & a_{1,1}y_1^{q_1} + \dots + a_{1,n}y_n^{q_n} + b_1 \\
        &\vdots&\\
        x_m^{p_m} & = & a_{m,1}y_1^{q_1} + \dots + a_{m,n}y_n^{q_n} + b_m\\
        z_1^{r_1} & = & c_1 \\
        & \vdots & \\
        z_l^{r_l} & = & c_l
    \end{array}}\,,\\
    \sigma\!\left(\begin{array}{@{\,}l@{\,}}
        u_{1}^{p_1},\dots,\,u_{m}^{p_m}, \\
        v_{1}^{p_1},\dots,\,v_{m}^{p_m}, \\
        z_1^{r_1},\dots,\,z_l^{r_l}
    \end{array}\right)&= \disj{\begin{array}{r@{\mskip\medmuskip}c@{\mskip\medmuskip}l}
        u_{1}^{p_1} & = & v_{1}^{p_1} \\
        &\vdots&\\
        u_{m}^{p_m} & = & v_{m}^{p_m} \\
         z_1^{r_1} & = & c_1 \\
        & \vdots & \\
        z_l^{r_l} & = & c_l
    \end{array}}\,,\\
    \lambda_1(x_1^{p_1},\,y_1^{q_1},\dots,\,y_n^{q_n}) &= (x_1^{p_1} = a_{1,1}y_1^{q_1} + \dots + a_{1,n}y_n^{q_n} + b_1)\,,\\
    &\hspace*{0.5em}\vdots\\
    \lambda_m(x_m^{p_m},\,y_1^{q_1},\dots,\,y_n^{q_n}) &= (x_m^{p_m} = a_{m,1}y_1^{q_1} + \dots + a_{m,n}y_n^{q_n} + b_m)\,,
\end{align*}
where $m,\,n,\,l\in\N_0$, and $p_i,\,q_j,\,r_h\in E_k$, $a_{i,j},\,b_i,\,c_h\in\{0,\,1\}$ for all $i\in E_m$, $j\in E_n$, $h\in E_l$. Then
\begin{enumerate}
    \item $\sigma\in\pp{\rho}$,
    \item $\qpp{\rho} = \qpp{\sigma,\,\lambda_1,\dots,\,\lambda_m}$.
\end{enumerate}
\end{lemma}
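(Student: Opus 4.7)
The plan is to derive Part~(2) from Part~(1) together with a universal-quantifier trick, so the main technical effort concentrates on Part~(1). Throughout I abbreviate $L_j(y)\coloneqq a_{j,1}y_1^{q_1}+\cdots+a_{j,n}y_n^{q_n}$, so that $\lambda_j(x_j^{p_j},y) = (x_j^{p_j} = L_j(y)+b_j)$.

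For Part~(1), the claim is that $\sigma$ is primitive positive definable from $\rho$. The natural first attempt is
\begin{equation*}
\exists y_1^{q_1} \cdots \exists y_n^{q_n}\; \rho(u_1^{p_1},\dots,u_m^{p_m},y,z) \wedge \rho(v_1^{p_1},\dots,v_m^{p_m},y,z),
\end{equation*}
which captures the diagonal conjunctions $\lambda_i(u_i,y) \wedge \lambda_i(v_i,y)$, collapsing to $u_i^{p_i}=v_i^{p_i}$, as well as the shared constant disjuncts $z_h^{r_h}=c_h$. The obstruction is that cross conjunctions $\lambda_i(u_i,y)\wedge\lambda_j(v_j,y)$ with $i\neq j$ may be jointly satisfiable for some $y$ and in fact become trivial precisely when $L_i,L_j$ are linearly independent as forms in $y$, producing spurious tuples. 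I would eliminate these cross terms either by intersecting with further copies of $\rho$ in which the $u_i$'s and $v_i$'s are permuted in patterns forcing every surviving conjunction to have $i=j$ at some coordinate, or alternatively by invoking the $\Pol$-$\Inv$ Galois connection (available in the multi-sorted setting by Romov): it then suffices to verify that every polymorphism of $\rho$ preserves $\sigma$, which on $\{0,1\}$ reduces to a finite case analysis along Post's lattice. I expect this step to be the main obstacle.

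For Part~(2), the inclusion $\qpp{\rho} \subseteq \qpp{\sigma, \lambda_1, \ldots, \lambda_m}$ follows from the pp definition
\begin{equation*}
\rho(x,y,z) \;\equiv\; \exists u_1^{p_1} \cdots \exists u_m^{p_m}\; \sigma(x_1^{p_1},\dots,x_m^{p_m},u_1^{p_1},\dots,u_m^{p_m},z) \wedge \bigwedge_{i=1}^m \lambda_i(u_i^{p_i}, y),
\end{equation*}
since each $\lambda_i$ functionally pins $u_i^{p_i}$ to $L_i(y)+b_i$, so that $\sigma(x,u,z)$ unfolds to $\bigvee_i(x_i^{p_i}=L_i(y)+b_i) \vee \bigvee_h(z_h^{r_h}=c_h) = \rho(x,y,z)$. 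For the reverse inclusion, it suffices to show $\sigma$ and each $\lambda_i$ lie in $\qpp{\rho}$: $\sigma$ is given by Part~(1), and for $\lambda_i$ I would use
\begin{equation*}
\lambda_i(x_i^{p_i}, y) \;\equiv\; \forall x_1^{p_1} \cdots \forall \widehat{x_i^{p_i}} \cdots \forall x_m^{p_m}\; \forall z_1^{r_1} \cdots \forall z_l^{r_l}\; \rho(x,y,z),
\end{equation*}
where the hat indicates that $x_i^{p_i}$ is omitted from the quantifier list. Correctness: the worst-case assignment $x_j^{p_j} = L_j(y) + b_j + 1$ for $j\neq i$ and $z_h^{r_h} = c_h + 1$ for all $h$ falsifies every disjunct of $\rho$ except $\lambda_i(x_i^{p_i}, y)$, so the universally quantified formula holds precisely when $\lambda_i(x_i^{p_i}, y)$ does; and any other assignment to the quantified variables already satisfies some disjunct of $\rho$ by itself.
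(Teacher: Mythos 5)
Your Part~(2) is correct and is essentially the paper's argument: you recover $\rho$ from $\sigma,\lambda_1,\dots,\lambda_m$ by existentially quantifying the pinned variables $u_i$, and you obtain each $\lambda_i$ from $\rho$ by universally quantifying the remaining $x_j$'s and all $z_h$'s, exactly as the paper does. But the reverse inclusion of Part~(2) also needs $\sigma\in\qpp{\rho}$, i.e.\ Part~(1), and Part~(1) is precisely what you leave open (``I expect this step to be the main obstacle''). Since Part~(1) is the actual content of the lemma, this is a genuine gap: neither of your two sketches supplies the construction, and neither is verified.

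The missing idea is a single concrete formula: conjoin all $2^m$ \emph{mixed} instantiations of $\rho$ under one \emph{shared} block of existential quantifiers,
\begin{equation*}
    \sigma(u_1,\dots,u_m,v_1,\dots,v_m,z_1,\dots,z_l)=\exists y_1\dots\exists y_n\;\bigwedge_{(d_1,\dots,d_m)\in\{1,\,2\}^m}\rho(w_{1,d_1},\dots,w_{m,d_m},y_1,\dots,y_n,z_1,\dots,z_l)\,,
\end{equation*}
where $w_{i,1}\coloneqq u_i$ and $w_{i,2}\coloneqq v_i$. Sharing the witnesses $y$ across all $2^m$ conjuncts is exactly what eliminates the cross terms you worry about; your phrase ``intersecting with further copies of $\rho$ in which the $u_i$'s and $v_i$'s are permuted'' fails if each copy carries its own existential quantifier. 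For instance, with $\rho(x_1,x_2,y)=(x_1=y\lor x_2=y)$ and $(u_1,u_2,v_1,v_2)=(0,0,1,1)$, every mixed copy is separately satisfiable although $\sigma$ is false, whereas with a common $y$ the conjuncts for $(d_1,d_2)=(1,1)$ and $(2,2)$ force $y=0$ and $y=1$ simultaneously. Verifying the displayed formula is then a short case analysis (if some $z_h=c_h$ both sides hold; if $u_i=v_i$ for some $i$, choose $y$ making the $i$-th disjunct true in every conjunct; if all $u_i\neq v_i$ and no $z_h=c_h$, then for any $y$ one can pick each $d_i$ so that $w_{i,d_i}\neq a_{i,1}y_1+\dots+a_{i,n}y_n+b_i$, falsifying that conjunct), under the implicit assumption -- satisfied in the intended application after Gauss--Jordan elimination -- that each of the first $m$ rows involves at least one $y$-variable, constant rows being listed among the $z$-rows. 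Finally, your alternative route via the multi-sorted $\Pol$--$\Inv$ connection is not circular, but the claim that it ``reduces to a finite case analysis along Post's lattice'' is unsubstantiated: polymorphism clones of $k$-sorted relations are not classified by Post's lattice, and you give no argument that every polymorphism of $\rho$ preserves $\sigma$.
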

\begin{lemma}\label{thm:canon45}
Let
\begin{multline*}
    \rho(x_1^1,\dots,\,x_{l_1}^1,\,x_1^2,\dots,\,x_{l_2}^2,\dots,\,x_1^k,\dots,\,x_{l_k}^k) =\\= (x_1^1 + \dots + x_{l_1}^1 + x_{1}^2 + \dots + x_{l_2}^2+\dots+x_{1}^k + \dots + x_{l_k}^k = b)\,,
\end{multline*}
where $l_1,\dots,\,l_k\in\N_0$ and $l_1 + \dots + l_k \geq 3$. We define
\begin{align*}
    I &= \{i\in E_k\mid l_i\neq 0\}\,, \qquad \text{(sorts with at least one variable)}\\
    O &= \{i\in E_k\mid l_i\text{ is odd}\}\,. \quad \text{(sorts with an odd number of variables)}
\end{align*}
Then:
\begin{enumerate}
    \item If $O = \emptyset$ and $b = 0$, then
    \begin{equation*}
        \qpp{\rho} = \qpp{\{x^i+y^i = u^j + v^j \mid i,\,j\in I\}}\,.
    \end{equation*}
    \item If $O = \emptyset$ and $b=1$, then
    \begin{equation*}
        \qpp{\rho} = \qpp{\{(x^i+y^i = u^j + v^j),\,(x^i + y^i = 1) \mid i,\,j\in I\}}\,.
    \end{equation*}
    \item If $O \neq \emptyset$, letting $\{p_1,\dots,\,p_m\} = O$, we have
    \begin{equation*}
        \qpp{\rho} = \qpp{\{x^{p_1} +\dots + x^{p_m} =  b\}\cup\{x^i+y^i = u^j + v^j \mid i,\,j\in I\}}\,.
    \end{equation*}
\end{enumerate}
\end{lemma}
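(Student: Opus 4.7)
The plan is to prove, in each of the three cases, both inclusions $\qpp{\text{RHS}} \subseteq \qpp{\rho}$ and $\rho \in \qpp{\text{RHS}}$ by writing out explicit primitive positive formulas; universal quantifiers turn out not to be needed, so both equalities actually hold already at the level of $\pp{\cdot}$. Two techniques recur throughout. The first is identifying two variables of the same sort inside $\rho$, which deletes their contribution from the underlying linear equation since $x + x = 0 \pmod{2}$. The second is chaining copies of $\rho$ or of the four-ary relation $R_{ij} \coloneqq (x^i + y^i = u^j + v^j)$ by sharing existentially quantified auxiliary variables, which adds the corresponding equations modulo~$2$.

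For the forward direction, pairing up and identifying all variables of $\rho$ except two of sort $i$ and two of sort $j$ reduces $\rho$ to a four-variable equation. In case~1 this yields $R_{ij}$ directly; to obtain the diagonal $R_{ii}$ when $l_i = 2$, I would chain through some $j \in I \setminus \{i\}$ via $\exists u^j \exists v^j \, [R_{ij}(x, y, u, v) \land R_{ij}(s, t, u, v)]$, which gives $x^i + y^i = s^i + t^i$ (if $|I| = 1$ then $l_i \geq 4$ by $\sum l_i \geq 3$, and the argument is immediate). In case~2 the same identification yields $x^i + y^i = 1$ when only two variables remain, and $R_{ij}$ is obtained by conjoining two copies of $\rho$ with shared sort-$i$ and sort-$j$ auxiliaries so that the two $b = 1$ contributions cancel. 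In case~3 identification of pairs in every sort produces the relation $\sum_{p \in O} x^p = b$ at once, after which $R_{ij}$ is obtained by conjoining $\rho$ with this new relation, gluing their odd-sort leftovers so that the $\sum_{p \in O}$ term cancels.

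For the reverse direction, I would chain $R_{ij}$'s into $\rho$. A chain of the form $\exists y_1 \dots \exists y_{t-1} \, [R(x_1, x_2, x_3, y_1) \land R(y_1, x_4, x_5, y_2) \land \dots \land R(y_{t-1}, x_{N-2}, x_{N-1}, x_N)]$ encodes the equation $x_1 + \dots + x_N = 0$, and by choosing the sorts of the individual $R_{ij}$'s in the chain (using that $R_{ii}$ is among the RHS) one can distribute the $x_r$'s into any sort pattern with an even count in each sort; this settles case~1. Case~2 is reduced to case~1 by splicing in one extra pair $a^i, b^i$ constrained by $a^i + b^i = 1$, which shifts the total parity from $0$ to $1$. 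Case~3 is handled analogously: the relation $\sum_{p \in O} x^p = b$ is spliced into a chain of $R_{ij}$'s via auxiliary variables of sorts in $O$ that tie the arguments of $\sum_{p \in O} x^p = b$ to the odd-sort variables of $\rho$.

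The main obstacle I expect is the forward direction of case~3 when some $l_i$ with $i \in O$ equals~$1$: then $\rho$ contains only one variable of sort $i$, so direct identification cannot retain two sort-$i$ variables to form $R_{ij}$. The remedy is to use two copies of $\rho$ sharing auxiliary variables of a sort in $I$, via an identity of the shape $R_{ij}(x_1, x_2, u_1, u_2) = \exists w^j \, [\rho(x_1, u_1, w, \dots) \land \rho(x_2, u_2, w, \dots)]$, with the remaining slots in each $\rho$ pair-identified so their contributions cancel; in particular the case $i = j$ can require up to four copies of $\rho$ with a carefully chosen sharing pattern. Ensuring that a suitable auxiliary sort in $I$ is available for every target pair $(i, j)$, and verifying that the existential projection really collapses to the desired equation and no more, requires a careful sub-case analysis on $(|O|, \{l_i\}_{i \in I})$.
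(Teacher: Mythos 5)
Your proposal is correct and is essentially the paper's own argument: the paper proves the inclusion $\supseteq$ by identification of variables (eo3) and removal of dummy variables (eo1), ``sometimes preceded by a suitable sequence of compositions (eo4) of $\rho$ with itself,'' and the inclusion $\subseteq$ by a sequence of compositions (eo4) of the right-hand-side relations -- exactly your identification/chaining scheme, with no universal quantifiers needed. Your write-up simply supplies the details the paper leaves implicit, including the $l_i=1$ subcase in case~3, which is precisely what the paper's phrase about composing $\rho$ with itself is meant to cover.
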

\begin{lemma}\label{thm:canon2}
Let $p_1,\dots,\,p_m,\,r_1,\dots,\,r_l\in E_k$ and $c_1,\dots,\,c_l\in\{0,\,1\}$. Then
\begin{equation*}
    \qpp{\disj{\begin{array}{r@{\mskip\medmuskip}c@{\mskip\medmuskip}l}
        u_{1}^{p_1} & = & v_{1}^{p_1} \\
        &\vdots&\\
        u_{m}^{p_m} & = & v_{m}^{p_m} \\
        z_1^{r_1} & = & c_1 \\
        z_2^{r_2} & = & c_2 \\
        & \vdots & \\
        z_l^{r_l} & = & c_l
    \end{array}}} = \qpp{\disj{\begin{array}{r@{\mskip\medmuskip}c@{\mskip\medmuskip}l}
        u_{1}^{p_1} & = & v_{1}^{p_1} \\
        &\vdots&\\
        u_{m}^{p_m} & = & v_{m}^{p_m} \\
        z_2^{r_2} & = & c_2 \\
        & \vdots & \\
        z_{l}^{r_{l}} & = & c_{l}
    \end{array}},\,\disj{\begin{array}{r@{\mskip\medmuskip}c@{\mskip\medmuskip}l}
        u_{1}^{p_1} & = & v_{1}^{p_1} \\
        z_1^{r_1} & = & c_1
    \end{array}}}\,.
\end{equation*}
\end{lemma}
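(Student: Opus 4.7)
Denote the left-hand side relation by $\rho$ and the two right-hand side relations by $\rho_1$ (the disjunction missing $z_1 = c_1$) and $\rho_2$ (the binary disjunction). The plan is to prove the two inclusions separately. The direction $\qpp{\rho_1, \rho_2} \subseteq \qpp{\rho}$ I would handle by noting that both generators arise from $\rho$ via universal quantification, whose role is to strip off disjuncts. Concretely, $\rho_1 = \forall z_1^{r_1}\, \rho$: the value $z_1 = 1-c_1$ falsifies the $z_1 = c_1$ disjunct and so forces $\rho_1$, whereas $z_1 = c_1$ makes $\rho$ trivially true. Similarly, $\rho_2 = \forall u_2^{p_2}\forall v_2^{p_2}\cdots\forall u_m^{p_m}\forall v_m^{p_m}\forall z_2^{r_2}\cdots\forall z_l^{r_l}\, \rho$: choosing $u_i \neq v_i$ for every $i\geq 2$ and $z_j \neq c_j$ for every $j\geq 2$ kills all the corresponding disjuncts, leaving precisely $u_1^{p_1} = v_1^{p_1} \lor z_1^{r_1} = c_1$, while any assignment that satisfies one of the stripped disjuncts makes $\rho$ trivially true.

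For the reverse inclusion $\rho \in \qpp{\rho_1, \rho_2}$, I would exhibit an explicit pp formula (so the full power of qpp is not even needed). Introduce a fresh existentially quantified variable $a$ of sort $p_1$, and claim
\[
    \rho = \exists a^{p_1}\, \rho_2(a,\, v_1,\, z_1) \land \rho_1(u_1,\, a,\, u_2,\, v_2,\, \ldots,\, u_m,\, v_m,\, z_2,\, \ldots,\, z_l).
\]
To verify, I split on $z_1$. If $z_1 = c_1$, then $\rho_2(a,v_1,z_1)$ is true for every $a$, and choosing $a := u_1$ satisfies the first disjunct of $\rho_1$; hence the right-hand side is true, as is $\rho$. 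If $z_1 \neq c_1$, then $\rho_2$ forces $a = v_1$, so the second conjunct becomes $\rho_1(u_1,v_1,u_2,v_2,\ldots,z_l)$, i.e.\ the disjunction of every literal of $\rho$ other than $z_1 = c_1$, which, combined with $z_1 = c_1$ being false, is equivalent to $\rho$ holding.

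The main obstacle is guessing the pp formula: the trick is that $\rho_2$ plays the role of a conditional gadget that either (when $z_1 = c_1$) becomes vacuous and allows the auxiliary $a$ to be set freely to absorb $\rho_1$'s first disjunct, or (when $z_1 \neq c_1$) pins $a = v_1$ and thereby reconstructs the full $\rho_1$ disjunction together with the missing literal. Once the formula is written down, the rest is a routine two-case check. It is worth remarking that universal quantification is genuinely needed for the easy direction (to delete disjuncts), whereas the hard direction lives inside $\pp{-}$; this asymmetry is characteristic of the decomposition lemmata for canonical relations in this paper.
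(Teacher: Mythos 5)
Your proposal is correct and follows essentially the same route as the paper: the generators are obtained from $\rho$ by universal quantification (eo5), and $\rho$ is recovered from them by a composition (eo4), which is exactly the pp formula $\exists a\, \rho_2(a,\,v_1,\,z_1)\land\rho_1(u_1,\,a,\,\dots)$ you wrote down, with the two-case check on $z_1$ matching the intended verification.
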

\begin{lemma}\label{thm:canon3}
Let $p_1,\dots,\,p_m\in E_k$. Then
\begin{equation*}
    \qpp{\disj{\begin{array}{r@{\mskip\medmuskip}c@{\mskip\medmuskip}l}
        u_{1}^{p_1} & = & v_{1}^{p_1} \\
        u_{2}^{p_2} & = & v_{2}^{p_2} \\
        &\vdots&\\
        u_{m}^{p_m} & = & v_{m}^{p_m} 
    \end{array}}} = \qpp{\disj{\begin{array}{r@{\mskip\medmuskip}c@{\mskip\medmuskip}l}
        u_{2}^{p_2} & = & v_{2}^{p_2} \\
        &\vdots&\\
        u_{m}^{p_m} & = & v_{m}^{p_m}
    \end{array}},\,\disj{\begin{array}{r@{\mskip\medmuskip}c@{\mskip\medmuskip}l}
        u_{1}^{p_1} & = & v_{1}^{p_1} \\
        u_{2}^{p_2} & = & v_{2}^{p_2} 
    \end{array}}}\,.
\end{equation*}
Furthermore, if $i\in E_k$, then
\begin{equation*}
    \qpp{x^i = y^i\lor u^i = v^i} = \qpp{x^i = y^i \lor y^i = z^i}\,.
\end{equation*}
\end{lemma}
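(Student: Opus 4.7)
The plan is to treat the two identities in Lemma~\ref{thm:canon3} independently, and for each verify both inclusions. Denote by $\rho_m$ the left-hand relation of the first identity, by $\mu$ the two-disjunct ingredient, and by $\tau$ the $(m-1)$-disjunct ingredient on the right. For the $\supseteq$ direction I would extract $\mu$ and $\tau$ from $\rho_m$ via universal quantification: the identity
\[
\mu(u_1^{p_1}, v_1^{p_1}, u_2^{p_2}, v_2^{p_2}) = \forall u_3^{p_3}\, \forall v_3^{p_3}\, \cdots\, \forall u_m^{p_m}\, \forall v_m^{p_m}\, \rho_m
\]
holds because on the two-element domain we can instantiate each universally quantified pair by $(0, 1)$, forcing the disjunct $u_i = v_i$ to fail; since $\rho_m$ must still hold, the surviving two disjuncts of $\mu$ must carry the load. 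Analogously, $\tau = \forall u_1^{p_1}\, \forall v_1^{p_1}\, \rho_m$.

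For the $\subseteq$ direction of the first identity, I would propose the existential formula
\[
\rho_m(u_1, v_1, \dots, u_m, v_m) = \exists w^{p_2}\,\bigl[\mu(u_1, v_1, u_2, w) \wedge \tau(w, v_2, u_3, v_3, \dots, u_m, v_m)\bigr],
\]
placing $u_2$ in the fourth slot of $\mu$ and $v_2$ in the second slot of $\tau$. A case analysis verifies correctness: if $u_2 = v_2$, pick $w = u_2$ and both conjuncts hold trivially; if $u_2 \neq v_2$, then $w$ must be one of these two values, and the choice $w = u_2$ forces $\mu$ to hold while reducing $\tau$ to $\bigvee_{i \geq 3}(u_i = v_i)$, whereas $w = v_2$ forces $\tau$ and reduces $\mu$ to $u_1 = v_1$. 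The existential therefore captures exactly $(u_2 = v_2) \vee (u_1 = v_1) \vee \bigvee_{i \geq 3}(u_i = v_i) = \rho_m$.

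For the second identity, write $\rho_L(x, y, u, v) = (x = y) \vee (u = v)$ and $\rho_R(x, y, z) = (x = y) \vee (y = z)$, all of sort $i$. The $\supseteq$ inclusion is immediate from variable identification: $\rho_R(x, y, z) = \rho_L(x, y, y, z)$. For $\subseteq$ I would use the conjunction of two existentials,
\[
\rho_L(x, y, u, v) = \exists w\,\bigl[\rho_R(x, y, w) \wedge \rho_R(w, u, v)\bigr] \wedge \exists w'\,\bigl[\rho_R(x, y, w') \wedge \rho_R(u, v, w')\bigr].
\]
A case analysis shows that the first conjunct evaluates to $(x = y) \vee (u = v) \vee (y = u)$ and the second to $(x = y) \vee (u = v) \vee (y = v)$. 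The intersection removes the spurious extras: a tuple with $x \neq y$ and $u \neq v$ satisfying both would require $y = u$ and $y = v$, hence $u = v$, a contradiction.

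The main obstacle I anticipate is the case analysis in the $\subseteq$ direction of each identity: naive existential combinations of the smaller relations tend to introduce spurious extra disjuncts such as $y = u$. The guiding principle in both parts is a pigeonhole on the two-element domain: a single witness $w$ can agree with at most one of two distinct values, so a carefully chosen formula pins down one disjunct per choice and pushes the remaining alternatives onto the companion ingredient, either $\tau$ in Part~1 or the second existential in Part~2.
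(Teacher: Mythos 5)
Your proof is correct and follows essentially the same route as the paper: universal quantification (eo5) gives the easy inclusions, and the converse of the first identity is exactly the composition (eo4) over a shared witness of sort $p_2$ that the paper uses, as in Lemma~\ref{thm:canon2}. The only cosmetic difference is in the ``$\subseteq$'' direction of the second identity, where the paper invokes Lemma~\ref{thm:decomposition} applied to $(x_1^i = y^i \lor x_2^i = y^i)$ while you verify by hand an equivalent pp definition (a conjunction of two compositions of the ternary relation with itself); your case analysis there is sound.
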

\begin{lemma}\label{thm:disjeq01}
Let $i,\,r_1,\dots,\,r_l\in E_k$ and $c_1,\dots,\,c_l\in\{0,\,1\}$. Then
\begin{equation*}
    \qpp{\disj{\begin{array}{r@{\mskip\medmuskip}c@{\mskip\medmuskip}l}
        x^i & = & 0 \\
        y^i & = & 1 \\
        z_1^{r_1} & = & c_1 \\
        z_2^{r_2} & = & c_2 \\
        & \vdots & \\
        z_l^{r_l} & = & c_l
    \end{array}}} = \qpp{\disj{\begin{array}{r@{\mskip\medmuskip}c@{\mskip\medmuskip}l}
        x^i & = & 0 \\
        y^i & = & 1 \\
        z_2^{r_2} & = & c_2 \\
        & \vdots & \\
        z_l^{r_l} & = & c_l
    \end{array}},\,\disj{\begin{array}{r@{\mskip\medmuskip}c@{\mskip\medmuskip}l}
        x^i & = & 0 \\
        y^i & = & 1 \\
        z_1^{r_1} & = & c_1 \\
    \end{array}}}\,.
\end{equation*}
\end{lemma}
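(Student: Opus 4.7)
The plan is to prove the two containments $\qpp{\rho_1,\rho_2}\subseteq\qpp{\rho}$ and $\qpp{\rho}\subseteq\qpp{\rho_1,\rho_2}$ separately.

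For the first, I would exhibit $\rho_1$ and $\rho_2$ as explicit qpp formulas over $\rho$ using only universal quantifiers:
\begin{equation*}
\rho_1(x,y,z_2,\dots,z_l)\iff \forall z_1\,\rho(x,y,z_1,z_2,\dots,z_l)
\end{equation*}
and
\begin{equation*}
\rho_2(x,y,z_1)\iff \forall z_2\cdots\forall z_l\,\rho(x,y,z_1,\dots,z_l).
\end{equation*}
Both verifications reduce to a routine case check: the disjunct $z_j=c_j$ in $\rho$ depends only on $z_j$ and fails at $z_j=1-c_j$, so universally quantifying $z_j$ eliminates exactly that disjunct while leaving the remaining ones intact.

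For the reverse containment the semantic content is the disjunction $\rho\iff\rho_1\vee\rho_2$ (both sides sharing $x$ and $y$), which qpp formulas cannot express directly. The plan is to introduce an auxiliary variable $z_0$ of sort $i$ acting as a Boolean switch between the two halves, via the identity
\begin{equation*}
\rho(x,y,z_1,\dots,z_l)\iff\exists z_0\bigl[\rho_2(z_0,y,z_1)\wedge \rho_1(x,z_0,z_2,\dots,z_l)\bigr].
\end{equation*}

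To verify this, I would split on the two possible values of $z_0$. For $z_0=0$, the factor $\rho_2(0,y,z_1)$ is trivially true by its first disjunct, and $\rho_1(x,0,z_2,\dots,z_l)$ collapses to $x=0\lor\bigvee_{j\geq 2}(z_j=c_j)$. For $z_0=1$, the factor $\rho_1(x,1,z_2,\dots,z_l)$ is trivially true by its second disjunct, and $\rho_2(1,y,z_1)$ collapses to $y=1\lor z_1=c_1$. Since $\exists z_0$ over the two-element domain is exactly the disjunction of these two cases, the existential recovers $\rho$.

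The hard part is spotting the right positioning of $z_0$: it has to plug into a ``killer'' slot of one factor (the $x$-slot of $\rho_2$ or the $y$-slot of $\rho_1$), trivialising that factor, while simultaneously appearing in the other factor in a position whose value does not destroy the disjunct we want to preserve. Once this formula is guessed, the rest is mechanical case analysis and no further auxiliary lemmata are required.
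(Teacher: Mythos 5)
Your proof is correct and follows essentially the same route as the paper: the paper (by analogy with Lemma~\ref{thm:canon2}) obtains the two right-hand relations by universal quantification (eo5), exactly as your $\forall z_1$ and $\forall z_2\cdots\forall z_l$ formulas do, and recovers $\rho$ by the composition operation (eo4), which is precisely your $\exists z_0\,[\rho_2(z_0,y,z_1)\wedge\rho_1(x,z_0,z_2,\dots,z_l)]$ with the shared sort-$i$ variable. Your ``switch'' case analysis on $z_0\in\{0,1\}$ is the same mechanical computation the paper packages in Remark~\ref{rem:applyingeo}.
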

\begin{lemma}\label{thm:canon12}
Let $i,\,j\in E_k$ and $b\in\{0,\,1\}$. Then
\begin{equation*}
    \qpp{\disj{\begin{array}{r@{\mskip\medmuskip}c@{\mskip\medmuskip}l}
        x^i & = & 0 \\
        y^i & = & 1 \\
        z^j & = & b \\
    \end{array}}} = \qpp{\disj{\begin{array}{r@{\mskip\medmuskip}c@{\mskip\medmuskip}l}
        x^i & = & y^i \\
        z^j & = & b \\
    \end{array}}, \disj{\begin{array}{r@{\mskip\medmuskip}c@{\mskip\medmuskip}l}
        x^i & = & 0 \\
        y^i & = & 1 \\
    \end{array}}}\,.
\end{equation*}
\end{lemma}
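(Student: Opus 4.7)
The plan is to prove both inclusions by exhibiting explicit qpp formulas. Let me write $\rho(x^i,y^i,z^j)$ for the left-hand disjunction, $\sigma(x^i,y^i,z^j) \coloneqq (x^i = y^i) \lor (z^j = b)$, and $\tau(x^i,y^i) \coloneqq (x^i = 0) \lor (y^i = 1)$; these are canonical relations of types (c2) and (c1) respectively, and the lemma asserts $\qpp{\rho} = \qpp{\sigma, \tau}$.

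For the inclusion $\qpp{\sigma, \tau} \subseteq \qpp{\rho}$, I would produce $\tau$ and $\sigma$ separately from $\rho$. For $\tau$, I would take $\forall z^j \, \rho(x^i, y^i, z^j)$: when $z^j = b$ the predicate $\rho$ is trivially $\top$, and when $z^j = 1-b$ it collapses to $\tau(x^i, y^i)$, so the universal quantifier extracts precisely $\tau$. For $\sigma$, I would use the conjunction $\rho(x^i, y^i, z^j) \land \rho(y^i, x^i, z^j)$: both conjuncts are $\top$ when $z^j = b$, while for $z^j = 1-b$ the conjunction becomes $(x^i = 0 \lor y^i = 1) \land (y^i = 0 \lor x^i = 1)$, which a short case analysis shows is equivalent to $x^i = y^i$; hence the conjunction equals $(x^i = y^i) \lor (z^j = b) = \sigma$.

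For the reverse inclusion $\qpp{\rho} \subseteq \qpp{\sigma, \tau}$, I would recover $\rho$ from $\sigma$ and $\tau$ via the pp formula $\rho(x^i, y^i, z^j) = \exists u^i \, \sigma(x^i, u^i, z^j) \land \tau(u^i, y^i)$. If $z^j = b$, $\sigma$ is satisfied for any choice of $u^i$, and taking $u^i = 0$ makes $\tau(u^i, y^i)$ true, so the formula holds. If $z^j \neq b$, then $\sigma$ forces $u^i = x^i$, so the formula reduces to $\tau(x^i, y^i) = (x^i = 0) \lor (y^i = 1)$. Combining the two cases yields exactly $(x^i = 0) \lor (y^i = 1) \lor (z^j = b) = \rho$.

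There is no serious obstacle here: the content is essentially guessing the right formulas. The only mild point requiring care is the sort discipline — each of $\forall z^j$, $\exists u^i$, and the exchange $x^i \leftrightarrow y^i$ used in the $\sigma$-derivation stays within a single sort, so the restriction that identified variables share a common sort is respected, and the constructed expressions are genuine qpp formulas.
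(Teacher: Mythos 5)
Your proof is correct and follows essentially the same route as the paper: $\tau$ via universal quantification of $z^j$ (eo5), $\sigma$ via the conjunction $\rho(x^i,y^i,z^j)\land\rho(y^i,x^i,z^j)$ (eo6), and $\rho$ recovered from $\sigma$ and $\tau$ by the existential composition (eo4). You merely write out explicitly the formulas and case checks that the paper leaves implicit.
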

\begin{proof}[Proof of Theorem \ref{thm:CR}]
We will refer to the canonical relations by their numbering from Definition \ref{def:canonicalrelations}. The proof is a straightforward application of lemmata \ref{thm:gaussrearrangement}, \ref{thm:decomposition}, \ref{thm:canon45}, \ref{thm:canon2}, \ref{thm:canon3}, \ref{thm:disjeq01}, and \ref{thm:canon12}.

By Theorem~\ref{thm:KR}, it suffices to consider only the relations in $\KR^k$. For any $\rho'\in\KR^k$, we consider the similar relation $\rho\in\KR^k$ given by Lemma \ref{thm:gaussrearrangement}, for which we consider the relations $\sigma,\,\lambda_1,\dots,\,\lambda_m\in\KR^k$ given by Lemma \ref{thm:decomposition}. We have $\qpp{\rho'} = \qpp{\sigma,\,\lambda_1,\dots,\,\lambda_m}$.
\begin{itemize}
    \item By Lemma \ref{thm:canon45}, the relations $\lambda_1,\dots,\,\lambda_m$ can be described by some canonical relations (c4), (c5), (c6), and/or unary relations of type~(c7).
    \item The relation $\sigma$ is either similar to some canonical relation (c1) or (c7), or it can be described by some canonical relations (c2), (c3), and/or (c1) by the lemmata \ref{thm:canon2}, \ref{thm:canon3}, \ref{thm:disjeq01}, and/or \ref{thm:canon12}.\qedhere
\end{itemize}
\end{proof}

%%%%%%%%%%%%%%%%%%%%%%%%%%%%

%%%%%%%%%%%%%%%%%%%%%%%%%%%%%%%%%%%%%%%%%%%%%%%%%%%%%%%%%%%%%%%%%%%%%%

\section{Closedness criteria and Post's lattice theorem}\label{sec:closednessandpost}
Although all quantified relational clones are uniquely described by the canonical relations they contain, two different sets of canonical relations do not necessarily generate different quantified relational clones. In this section, we introduce elementary operations, which are less cumbersome to work with than general qpp formulas, and we describe the properties that make them a~valuable tool to decide whether two sets $S,\,S'\subseteq \CR^k$ generate the same quantified relational clone or not. Using these tools, we subsequently describe all $1$-sorted quantified relational clones, which can be extended to a classification of relational clones, providing a concise proof of Post's lattice theorem.
\subsection{Elementary operations}\label{sec:eo}
\begin{definition}[Elementary operations]\label{def:eo}
Let $k\in\N$. We define the~following \textit{elementary operations} on $R_{\{0,\,1\}}^k$.
\begin{enumerate}[label=\textup{(eo\arabic*)},leftmargin=1.4\parindent]
    \item \textit{Appending} or \textit{removing} a~dummy variable.
    \item \textit{Permutation} of variables. For a~relation $\rho$ of an arity $n$ and a~permutation $\pi$ on $\{1,\dots,\,n\}$ we define a~new relation $\rho_\pi$ as
    \begin{equation*}
        \rho_\pi(x_1,\dots,\,x_n) = \rho(x_{\pi(1)},\dots,\,x_{\pi(n)})\,.
    \end{equation*}
    \item \textit{Identification} of variables. For a~relation $\rho$ whose first two variables have the same sort, we define a~new relation $\rho'$ as
    \begin{equation*}
        \rho'(x_1,x_2,\dots,\,x_{n-1}) = \rho(x_1,\,x_1,\,x_2,\dots,\,x_{n-1})\,.
    \end{equation*}
    \item \textit{Composition} of  relations. Let $\rho_1$ and $\rho_2$ be relations whose first variables are non-dummy and have the same sort. The new relation $\rho'$ is defined by
    \begin{multline*}
        \rho'(x_1,\dots,\,x_{n-1},\,y_1,\dots,\,y_{m-1}) =\\= \exists z\, \rho_1(z,\,x_1,\dots,\,x_{n-1}) \land \rho_2(z,\,y_1,\dots,\,y_{m-1})\,.
    \end{multline*}
    \item \textit{Universal quantification} of a variable. For a~relation $\rho$ we define a~new relation $\rho'$ as
    \begin{equation*}
        \rho'(x_1,\dots,\,x_{n-1}) = \forall y\, \rho(y,\,x_1,\,x_2,\dots,\,x_{n-1})\,.
    \end{equation*}
    \item \textit{Conjunction} of two relations. Let $\rho_1$~and~$\rho_2$ be $n$-ary relations such that the $i$-th variable of $\rho_1$ has the same sort as the $i$-th variable of $\rho_2$ for every $i\in E_n$. The new relation $\rho'$ is defined by
    \begin{equation*}
        \rho'(x_1,\dots,\,x_n) = \rho_1(x_1,\dots,\,x_n) \land \rho_2(x_1,\dots,\,x_n)\,.
    \end{equation*}
\end{enumerate}
For $i\in E_6$ and $S\subseteq R_{\{0,\,1\}}^k$, we define $\eo{S}{i}$ as the set of all relations in $R_{\{0,\,1\}}^k$ that can be obtained from the elements of $S\cup\{\sigma_\bot,\,\sigma_{=}^1,\dots,\,\sigma_{=}^k\}$ by a sequence of elementary operations (eo1)--(eo$i$), and $\eoc{S}$ as the set of all relations in $R_{\{0,\,1\}}^k$ that can be obtained by a sequence of elementary operations (eo6).
\end{definition}
Let us now describe the properties of elementary operations. Informally, we claim that elementary operations are equivalent to general qpp formulas and that any relation in $\qpp{S}$ (for some set $S\subseteq\KR^k$), which can not be constructed using the elementary operations (eo1)--(eo5), is simply a conjunction of some relations that can be constructed in such way (Lemma~\ref{thm:eo} and Theorem~\ref{thm:eoconj}). Moreover, we claim that the application of elementary operations (eo1)--(eo5) on relations in $\KR^k$ can be computed explicitly (Remark~\ref{rem:applyingeo} and Observation~\ref{thm:KRclosedoneo5}) implying that characterization of quantified relational clones is, to some extent, purely a mechanical task (Remark~\ref{rem:closedness}).
\begin{lemma}\label{thm:eo}
Let $k\in\N$.
\begin{enumerate}
    \item Assume a relation $\rho\in R_{\{0,\,1\}}^k$ is defined by
    \begin{equation*}
        \rho(x_1,\dots,\,x_n) = \exists y\, \rho_1(v_{1,1},\dots,\,v_{1,n_1})\land\dots\land\rho_m(v_{m,1},\dots,\,v_{m,n_m})
    \end{equation*}
    where $\rho_1,\dots,\,\rho_m\in R_{\{0,\,1\}}^k$ and $v_{i,j}\in\{x_1,\dots,\,x_n,\,y\}$ are symbols for variables. There are relations $\sigma_1,\dots,\,\sigma_l \in \eo{\rho_1,\dots,\,\rho_m}{4}$ such that
    \begin{equation*}
        \rho(x_1,\dots,\,x_n) =  \sigma_1(x_1,\dots,\,x_n)\land\dots\land\sigma_l(x_1,\dots,\,x_n)\,.
    \end{equation*}
    \item Let $S\subseteq R_{\{0,\,1\}}^k$. For every $\rho\in\qpp{S}$, there are relations $\sigma_1,\dots,\,\sigma_l \in \eo{S}{5}$ such that
    \begin{equation*}
        \rho(x_1,\dots,\,x_n) =  \sigma_1(x_1,\dots,\,x_n)\land\dots\land\sigma_l(x_1,\dots,\,x_n)\,.
    \end{equation*}
\end{enumerate}
\end{lemma}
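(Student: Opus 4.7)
The proof splits naturally into the two parts. Part~1 handles a single existential quantifier over a conjunction on the two-element domain, and Part~2 promotes this to arbitrary qpp formulas by structural induction.

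For Part~1, I would start by rewriting
\[
\rho(\vec{x}) \;=\; \biggl(\exists y \bigwedge_{i\in I_y}\rho_i(v_i)\biggr)\,\land\,\bigwedge_{j\notin I_y}\rho_j(v_j),
\]
where $I_y$ is the set of indices $i$ for which $y$ occurs non-vacuously in $\rho_i(v_i)$. Each conjunct $\rho_j(v_j)$ with $j\notin I_y$ is brought to arity $n$ by a sequence of (eo1)-(eo3) (padding with dummies, permuting, identifying), producing one $\sigma_j$. The essential step is the existential factor, where I would invoke the Helly property of the two-element Boolean algebra: for any subsets $A_1,\dots,\,A_p\subseteq\{0,\,1\}$,
\[
\bigcap_{i=1}^p A_i \neq \emptyset \quad\iff\quad A_i\cap A_j\neq\emptyset \text{ for all } i,\,j.
\]
Applying this pointwise in $\vec{x}$ with $A_i = \{y : \rho_i(v_i(y,\vec{x}))=\top\}$ yields, whenever $|I_y|\geq 2$,
\[
\exists y\bigwedge_{i\in I_y}\rho_i(v_i) \;=\; \bigwedge_{\substack{i,\,j\in I_y \\ i<j}}\exists y\,\rho_i(v_i)\land\rho_j(v_j).
\]
Each pairwise existential on the right is obtained by a single application of composition (eo4), after permuting $y$ into the first position of $\rho_i$ and $\rho_j$ via (eo2), and then using (eo1)-(eo3) to bring the result to arity $n$. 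The degenerate case $|I_y|=1$ is handled by composing $\rho_{i_0}$ with itself via (eo4) and then identifying the duplicated free variables with (eo3); the case $|I_y|=0$ is trivial.

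For Part~2, I would proceed by structural induction on a first-order formula defining $\rho$. For an atom $\rho_i(v_i)$ (with $\rho_i\in S\cup\{\sigma_\bot,\,\sigma_=^1,\dots,\,\sigma_=^k\}$), the operations (eo1)-(eo3) produce a single $\sigma$ of arity $n$. For a conjunction $\phi\land\psi$, apply the induction hypothesis to each conjunct and pad each resulting relation with dummies via (eo1) to reach the common free-variable list. For $\exists y\,\phi(y,\vec{x})$, the induction hypothesis expresses $\phi$ as $\bigwedge_i \sigma_i(y,\vec{x})$ with $\sigma_i\in\eo{S}{5}$, and the Part~1 argument applied to the $\sigma_i$'s produces the desired decomposition using (eo4). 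For $\forall y\,\phi(y,\vec{x})$, the distributivity $\forall y\bigwedge_i \sigma_i = \bigwedge_i \forall y\,\sigma_i$ reduces the task to applying (eo5) to each $\sigma_i$ after permuting $y$ to the first position.

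The main obstacle is the Boolean Helly identity underlying Part~1; the two-element assumption enters the proof here and nowhere else. Once this pointwise equivalence is verified (by a short case analysis on the shape of the $A_i\subseteq\{0,\,1\}$), the remainder is structural bookkeeping to convert each intermediate relation to the correct arity via (eo1)-(eo3).
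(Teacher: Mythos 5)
Your proposal is correct and follows essentially the same route as the paper: the pointwise Helly-type identity on $\{0,\,1\}$ you invoke is exactly the paper's key claim that $\exists y\,\bigwedge_i\rho_i'(y,\boldsymbol{x})$ equals the conjunction of the pairwise compositions $\exists z\,\rho_i'(z,\boldsymbol{x})\land\rho_j'(z,\boldsymbol{x})$, with the same handling of the degenerate cases ($p=0$ trivial, $p=1$ via composing a relation with itself and identifying variables) and the same bookkeeping via (eo1)--(eo3). Part~2 by structural induction using distributivity of $\forall$ over $\land$ likewise matches the paper's argument.
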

\begin{proof}
It is straightforward to see that there are relations $\rho_1',\dots,\,\rho_m'$ defined from relations $\rho_1,\dots,\,\rho_m$ by rearranging, permuting variables, identifying variables, and appending/removing dummy variables, such that
\begin{multline*}
    \rho(x_1,\dots,\,x_n) = \exists y\, \rho_1'(y,\,x_1,\dots,\,x_n)\land\dots\land\rho_p'(y,\,x_1,\dots,\,x_n)\,\land\\
    \land \rho_{p+1}'(x_1,\dots,\,x_n)\land\dots\land\rho_m'(x_1,\dots,\,x_n)\,,
\end{multline*}
where the first variable of relations $\rho_1,\dots,\,\rho_p$ is non-dummy. For the rest of the proof, we will write $\boldsymbol{x}$ instead of $x_1,\dots,\,x_n$. 

Noticing that the relations $\rho_{p+1}',\dots,\,\rho_m'$ do not depend on the variable $y$, we can rearrange the original expression:
\begin{equation}
    \rho(\boldsymbol{x}) = \rho_{p+1}'(\boldsymbol{x})\land\dots\land\rho_m'(\boldsymbol{x})\, \land\exists y\, \rho_1'(y,\,\boldsymbol{x})\land\dots\land\rho_p'(y,\,\boldsymbol{x})\,. \label{eq:eoexistentialqelimination}
\end{equation}
We now assume that $p\geq 2$, as the case $p=0$ is trivial, and the case $p=1$ can be interpreted as a composition of $\rho_1'$ with itself and subsequent identifications of variables. The following claim captures the core idea of the proof.
\begin{claim}\label{thm:rewriteascompositions}
Let us denote
\begin{align*}
    \tau(\boldsymbol{x}) &= \exists y\, \rho_1'(y,\,\boldsymbol{x})\land\dots\land\rho_p'(y,\,\boldsymbol{x})\,,\\
    \tau'(\boldsymbol{x}) &= \bigwedge_{\substack{i,\,j\,\in\,E_p\\ i\, <\, j}} \exists z_{i,j}\, \rho_i'(z_{i,j},\,\boldsymbol{x})\land\rho_j'(z_{i,j},\,\boldsymbol{x})\,.
\end{align*}
Then $\tau(\boldsymbol{x}) = \tau'(\boldsymbol{x})$.
\end{claim}
\begin{proof}[Proof of Claim \ref{thm:rewriteascompositions}]
Fix $\boldsymbol{x} = \boldsymbol{a}\in\{0,\,1\}^n$. It is enough to prove that $\tau(\boldsymbol{a}) = \top$ if and only if $\tau'(\boldsymbol{a}) = \top$.

Assume $\tau(\boldsymbol{a}) = \top$, any witness for the existential quantifier in $\tau(\boldsymbol{a})$ is then a witness for all existential quantifiers in $\tau'(\boldsymbol{a})$, implying $\tau'(\boldsymbol{a}) = \top$.

Conversely, assume $\tau'(\boldsymbol{a}) = \top$ and suppose $0$ is not a witness for $y$ in $\tau(\boldsymbol{a})$. This implies that $\rho_q'(0,\,\boldsymbol{a}) = \bot$ for some $q\in E_p$. Since $\tau'(\boldsymbol{a}) = \top$, we especially know that
\begin{equation*}
    \exists z_{q,j}\, \rho_q'(z_{q,j},\,\boldsymbol{a})\land\rho_j'(z_{q,j},\,\boldsymbol{a}) = \top
\end{equation*}
for every $j\in E_p\setminus\{q\}$. This must be witnessed by $z_{q,j} = 1$, therefore $y=1$ is a witness for the existential quantifier in $\tau(\boldsymbol{a})$ and thus $\tau(\boldsymbol{a}) = \top$.
\end{proof}
Using Claim \ref{thm:rewriteascompositions}, the formula \eqref{eq:eoexistentialqelimination} simplifies to
\begin{equation*}
    \rho(\boldsymbol{x}) =\rho_{p+1}'(\boldsymbol{x})\land\dots\land\rho_m'(\boldsymbol{x})\,\land \bigwedge_{\substack{i,\,j\,\in\,E_p\\ i\, <\, j}} \underbrace{\exists z_{i,j}\, \rho_i'(z_{i,j},\,\boldsymbol{x})\land\rho_j'(z_{i,j},\,\boldsymbol{x})}_{\text{composition of $\rho_i'$ and $\rho_j'$}}\,.
\end{equation*}
This concludes the proof of statement (1) of the theorem.

Finally, statement (2) follows from statement (1) and from the distributivity of universal quantifiers over conjunctions -- specifically, by inductively eliminating all quantifiers from a general qpp formula.
\end{proof}
\begin{theorem}\label{thm:eoconj}
Let $k\in\N$. For every $S\subseteq R_{\{0,\,1\}}^k$, we have that
    \begin{equation*}
        \qpp{S} = \eo{S}{6} = \eoc{\eo{S}{5}}\,.
    \end{equation*}
\end{theorem}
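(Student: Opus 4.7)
The plan is to establish the cycle of inclusions $\qpp{S} \subseteq \eoc{\eo{S}{5}} \subseteq \eo{S}{6} \subseteq \qpp{S}$, which immediately yields the desired equality of all three sets.

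\textbf{Step 1:} The inclusion $\qpp{S} \subseteq \eoc{\eo{S}{5}}$ is essentially a restatement of Lemma~\ref{thm:eo}(2). Given any $\rho \in \qpp{S}$, that lemma produces $\sigma_1,\dots,\sigma_l \in \eo{S}{5}$ with $\rho = \sigma_1 \land \dots \land \sigma_l$. Since this is precisely an iterated application of the conjunction operation (eo6) to elements of $\eo{S}{5}$, we conclude $\rho \in \eoc{\eo{S}{5}}$. So this step is essentially free once Lemma~\ref{thm:eo} is in hand.

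\textbf{Step 2:} The inclusion $\eoc{\eo{S}{5}} \subseteq \eo{S}{6}$ is immediate from the definitions: anything built from $\eo{S}{5}$ using (eo6) alone is a fortiori built from $S \cup \{\sigma_\bot, \sigma_{=}^1, \dots, \sigma_{=}^k\}$ using operations (eo1)--(eo6), which is the definition of $\eo{S}{6}$.

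\textbf{Step 3:} The inclusion $\eo{S}{6} \subseteq \qpp{S}$ is handled by induction on the length of the sequence of elementary operations used to construct a given relation. The base case notes that $S \cup \{\sigma_\bot, \sigma_{=}^1, \dots, \sigma_{=}^k\} \subseteq \qpp{S}$ by definition of the qpp-closure. For the inductive step, I would verify that each of the six elementary operations, applied to relations in $\qpp{S}$, yields a relation in $\qpp{S}$. This is routine since qpp formulas explicitly permit $\land$, $\exists$, $\forall$, and variable manipulation: (eo1) and (eo2) amount to trivial rewriting of the defining formula (one may append a dummy variable via a conjunction with $\sigma_{=}^i(y,y)$ or similar); (eo3) is a substitution of one variable for another; (eo4) is a conjunction followed by existential quantification; (eo5) is a universal quantification; (eo6) is a conjunction. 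The sort-compatibility conditions in Definition~\ref{def:eo} guarantee that these rewrites respect the sort restrictions in Definition~\ref{def:relations}.

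\textbf{Expected obstacle.} There is no genuine difficulty here, as Lemma~\ref{thm:eo} already does the substantive work of eliminating quantifiers in favor of (eo1)--(eo5) plus a final conjunction. The only thing to watch out for in Step 3 is bookkeeping of the sort conventions for (eo1) (appending a dummy variable of a chosen sort) to ensure the rewrite as a qpp formula is legal; this can be handled by padding with a suitable instance of $\sigma_{=}^i$ when needed.
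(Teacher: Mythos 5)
Your proposal is correct and follows essentially the same route as the paper, which likewise deduces the result from Lemma~\ref{thm:eo}(2) together with the observation that every elementary operation is expressible as a qpp formula; you merely spell out the three inclusions and the routine induction that the paper leaves implicit.
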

\begin{proof}
This follows directly from statement (2) of Lemma \ref{thm:eo}, combined with the straightforward observation that all elementary operations are qpp formulas.
\end{proof}
\begin{remark}
Lemma \ref{thm:eo} may be extended to relations over an arbitrary domain of size $n\in\N$, where, instead of composition of $2$ relations, we similarly define a composition of $n$ relations. We omit the details as they are beyond the scope of this paper.
\end{remark}
Based on Theorem \ref{thm:eoconj}, we summarize the methods of using the elementary operations to describe (quantified) relational clones in the following remark.
\begin{remark}[Applying elementary operations]\label{rem:applyingeo}
Let $k\in\N$. Assume $\rho,\,\rho'\in\KR^k$ are relations of the form
\begin{align*}
    \rho(x,\,y_1,\dots,\,y_n) &= (a_1 y_1+\dots + a_n y_n + b = x) \lor \sigma(y_1,\dots,\,y_n)\,,\\
    \rho'(x,\,z_1,\dots,\,z_{m}) &= (a_1' z_1+\dots + a_{m}' z_m + b' = x) \lor \sigma'(z_1,\dots,\,z_{m})\,,
\end{align*}
for some $\sigma,\,\sigma'\in\KR^k$ and $a_1,\dots,\,a_n,\,a_1',\dots,\,a_{m}',\,b,\,b'\in\{0,\,1\}$. Let us observe that
\begin{equation*}
    \forall x\, \rho(x,\,y_1,\dots,\,y_n) = \sigma(y_1,\dots,\,y_n)\,,
\end{equation*}
and
\begin{multline*}
    \exists x\, \rho(x,\,y_1,\dots,\,y_n) \land \rho'(x,\,z_1,\dots,\,z_m) =\\= (a_1 y_1+\dots + a_n y_n + a_1'z_1+\dots+a_m'z_m + b + b'=0)\mskip\medmuskip \lor\\ \lor \sigma(y_1,\dots,\,y_n) \lor \sigma'(z_1,\dots,\,z_{m})\,,
\end{multline*}
where addition is taken modulo $2$. Colloquially, we see that (eo4) only ``joins'' some equations together and (eo5) ``removes'' equations.

From this example and based on the fact that applying (eo1)--(eo3) is simple, we may conclude that applying the elementary operations (eo1)--(eo5) on relations in $\KR^k$ is not only straightforward, but it always results in a~relation in $\KR^k$ (as we formally state in Observation \ref{thm:KRclosedoneo5}) whose representation as a disjunction of linear equations can be obtained directly from the representation of the original relations.
\end{remark}
\begin{observation}\label{thm:KRclosedoneo5}
Let $k\in\N$ and let $S\subseteq\KR^k$. Then $\eo{S}{5}\subseteq \KR^k$.
\end{observation}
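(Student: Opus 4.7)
The plan is to induct on the number of elementary operations applied. The base objects $\sigma_\bot$ and $\sigma_{=}^{i}$ belong to $\KR^k$ (the former as the empty disjunction of equations, the latter as the single equation $x + y = 0$), and the relations in $S$ lie in $\KR^k$ by assumption. So it suffices to show that each of the operations (eo1)--(eo5) preserves the class $\KR^k$.

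The operations (eo1)--(eo3) are handled by direct syntactic manipulation of a representation of $\rho \in \KR^k$ as a disjunction $\lambda_1 \lor \dots \lor \lambda_m$ of linear equations. Appending a dummy variable amounts to adding a $0$-coefficient monomial to each $\lambda_i$; removing a dummy variable or identifying two same-sort variables corresponds to substituting in each $\lambda_i$, possibly simplifying some disjuncts to $\top$ or $\bot$; permutation merely relabels variables. In every case the result is still a disjunction of linear equations, hence in $\KR^k$.

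For (eo4) and (eo5) the key technical tool is a \emph{pivot step}: if $\rho \in \KR^k$ and the variable $z$ is non-dummy in $\rho$, then $\rho$ can be rewritten as
\begin{equation*}
\rho(z,\boldsymbol{x}) = (z = L(\boldsymbol{x})) \lor \sigma(\boldsymbol{x}),
\end{equation*}
where $L$ is a linear form and $\sigma \in \KR^k$ does not involve $z$. To see this, pick any disjunct $\lambda_1 = (z = L + c)$ that contains $z$ (such a disjunct must exist, since otherwise the whole disjunction would not depend on $z$), and for every other disjunct $\lambda_j = (z = L_j + c_j)$ that also involves $z$ apply the pointwise identity $\lambda_1 \lor \lambda_j = \lambda_1 \lor \mu_j$, where $\mu_j$ is the linear equation $(L + L_j = c + c_j + 1)$ and does not involve $z$. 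This identity is verified by a case split: if $L + c = L_j + c_j$ on the current input, both sides equal $\lambda_1$, and otherwise $z$ lies in $\{L+c, L_j+c_j\} = \{0,1\}$ so both sides are $\top$. Iterating eliminates $z$ from every disjunct except $\lambda_1$; this is essentially the content of Lemma~\ref{thm:gaussrearrangement}, which could also be cited directly.

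With the pivot form in hand, the explicit formulas indicated in Remark~\ref{rem:applyingeo} are easy to verify. For (eo5), writing $\rho(y,\boldsymbol{x}) = (y = L(\boldsymbol{x})) \lor \sigma(\boldsymbol{x})$ gives $\forall y\, \rho(y,\boldsymbol{x}) = \sigma(\boldsymbol{x})$, since when $\sigma(\boldsymbol{x})$ fails, the disjunct $(y = L)$ holds for only one value of $y$. For (eo4), writing $\rho_i(z,\boldsymbol{x}_i) = (z = L_i) \lor \sigma_i$ for $i = 1, 2$, expanding the conjunction into four cases and existentially quantifying $z$ in each collapses the formula to $\sigma_1 \lor \sigma_2 \lor (L_1 = L_2)$, which is again a disjunction of linear equations. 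Thus both operations produce elements of $\KR^k$, completing the induction. The main delicate point is the pivot identity $\lambda_1 \lor \lambda_j = \lambda_1 \lor \mu_j$: it is not an XOR-manipulation of a single equation but a genuine equivalence of binary disjunctions that requires the case split above; once it is established, the rest of the argument is purely mechanical.
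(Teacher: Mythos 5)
Your proposal is correct and follows essentially the same route as the paper, which treats this observation as immediate from the explicit computations in Remark~\ref{rem:applyingeo} (your ``pivot form'' is exactly the normal form assumed there, justified as in Lemma~\ref{thm:gaussrearrangement}), with (eo1)--(eo3) handled syntactically. The only detail left implicit is the degenerate case of (eo5) where the quantified variable is dummy, which reduces trivially to the (eo1) case.
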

As we have observed, working with each elementary operation, apart from (eo6), is straightforward and mechanical. While the computation of (eo6) is also mechanical in any given case, determining whether the resulting relation belongs to $\CR^k$ is equivalent to checking whether a union of affine spaces over ${0,\,1}$ is itself an affine space (an immediate consequence of De~Morgan's laws) and obtaining an explicit representation of such relation can be cumbersome. Nevertheless, as the following theorem suggests, in each case, only finitely many conjunctions need to be considered.
\begin{theorem}\label{thm:expressedbyconjunctions}
Let $k\in\N$ and $S\subseteq R_{\{0,\,1\}}^k$. Assume $\rho$ is a canonical relation such that $\rho\in\qpp{S}$ and $\rho\notin \eo{S}{5}$. Then $\rho$ is not a canonical relation of type (c7). In particular, there are only finitely many relations $\rho$ such that $\rho\in\qpp{S}$ and $\rho\notin\eo{S}{5}$.
\end{theorem}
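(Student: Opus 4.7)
My plan is to decompose the assertion into a ``meet-irreducibility'' claim about type (c7) canonical relations and a short counting argument for the remaining types. The key structural observation is that a canonical relation $\rho$ of type (c7), with variables $x_j^{s_i}$ for $i\in E_n$, $j\in E_{m_i}$, and viewed as a subset of $\{0,1\}^N$ where $N=m_1+\dots+m_n$, is exactly the complement of the single tuple $\mathbf{t}$ defined by $t_j^{s_i}=1-b_i$; indeed, a tuple fails to lie in $\rho$ precisely when every disjunct $x_j^{s_i}=b_i$ fails.

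With this observation in hand, I would apply Lemma~\ref{thm:eo}(2) to write $\rho(x_1,\dots,\,x_N)=\sigma_1(x_1,\dots,\,x_N)\wedge\cdots\wedge\sigma_l(x_1,\dots,\,x_N)$ for some $\sigma_1,\dots,\,\sigma_l\in\eo{S}{5}$ over the same variables as $\rho$, and hence with the same arity $N$ and sort profile. Each $\sigma_i\supseteq\rho$, so the complement of $\sigma_i$ in $\{0,1\}^N$ is contained in the singleton $\{\mathbf{t}\}$; hence $\sigma_i$ is either $\rho$ itself or the full relation on those variables. Since $\rho$ is not the full relation and $\bigcap_i \sigma_i=\rho$ as sets of tuples, at least one $\sigma_i$ must coincide with $\rho$, yielding $\rho\in\eo{S}{5}$. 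This proves the first assertion by contrapositive.

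For the ``in particular'' clause, I would observe that canonical relations of types (c1)--(c6) have arity at most $\max(4,k)$: types (c1)--(c5) are of arity at most $4$, and in (c6) the sorts $s_1,\dots,\,s_n$ are pairwise distinct elements of $E_k$, so $n\leq k$. Since $E_k$ and $\{0,1\}$ are finite, only finitely many canonical relations of these types exist, which combined with the previous step gives the stated finiteness. The main obstacle is really just identifying the right meet-irreducibility property of (c7) relations; after that, the argument is essentially immediate, the only minor technicality being the verification that the full relation of any given sort profile lies in $\eo{S}{5}$, which follows by identifying variables in $\sigma_{=}^i$ via (eo3) and then appending dummy variables of the required sorts via (eo1).
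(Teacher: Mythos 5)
Your proposal is correct and follows essentially the same route as the paper: decompose $\rho$ as a conjunction of relations in $\eo{S}{5}$ (the paper cites Theorem~\ref{thm:eoconj}, you cite Lemma~\ref{thm:eo}(2), which is the same content), observe that a type (c7) relation is the complement of a single tuple so each conjunct is either full or equal to $\rho$, and conclude finiteness from the fact that there are only finitely many canonical relations of types (c1)--(c6). Your closing remark about full relations lying in $\eo{S}{5}$ is unnecessary for the argument but harmless.
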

\begin{proof}
By Theorem~\ref{thm:eoconj}, we may assume that
\begin{equation*}
    \rho(x_1,\dots,\,x_n) = \sigma_1(x_1,\dots,\,x_n)\land\dots\land\sigma_m(x_1,\dots,\,x_n)
\end{equation*}
for some $\sigma_1,\dots,\,\sigma_m\in\eo{S}{5}$. Suppose that $\rho$ is a canonical relation of type (c7) of arity $n$. For any such relation, there exists a~uniquely determined tuple $(a_1',\dots,\,a_n')\in\{0,\,1\}^n$ such that
\begin{equation*}
    \rho = A^n\setminus\{(a_1',\dots,\,a_n')\}\,.
\end{equation*}
This implies that for every $i\in E_m$, the relation $\sigma_i$ is either the full relation (i.e., the corresponding predicate is the constant $\top$ predicate) or equal to $\rho$. In either case, this leads to contradiction.

The second claim follows from the fact that for every $k\in \N$ there are finitely many canonical relations of types (c1)--(c6).
\end{proof}

We summarize the main results in the following remark.

\begin{remark}[Checking for closedness]\label{rem:closedness}
Suppose that we need to decide if a~set $S\subseteq\CR^k$ describes a relational clone, i.e. whether $S = \qpp{S}\cap\CR^k$. 

By Theorem~\ref{thm:eoconj} and Theorem~\ref{thm:expressedbyconjunctions}, this task reduces to computing $\eo{S}{5}$, which is a purely mechanical task by Remark~\ref{rem:applyingeo}, and subsequent checking of the following two conditions:
\begin{enumerate}
    \item $\eo{S}{5} \cap \CR^k = S$.
    \item For each of the finitely many canonical relations $\rho\notin S$ of types (c1)--(c6), we have
    \begin{equation*}
        \rho(x_1,\dots,\,x_n) \neq \bigwedge_{\substack{\sigma\,\in\,\eo{S}{5}\\ \rho\,\subseteq\,\sigma}} \sigma(x_1,\dots,\,x_n)\,.
    \end{equation*}
\end{enumerate}
If both of these conditions are satisfied, then $S = \qpp{S}\cap\CR^k$.

In conclusion, the entire process of describing qpp-closed sets and multi-sorted quantified relational clones on $\{0,\,1\}$ is, although nontrivial, mechanical in nature. To some extent, the same can be said about ``non-quantified'' relational clones. We explore this in Subsection~\ref{sec:post} (mainly Remark~\ref{rem:post}) by outlining the proof of Post's lattice theorem, and in Section~\ref{sec:lattice} (mainly Theorem~\ref{thm:indstr}), where we describe an inductive structure of the lattice of these clones.
\end{remark}

\subsection{1-sorted case and Post's lattice theorem}\label{sec:post}
As a demonstration of the practicality of the developed tools, we derive the classification of all $1$-sorted quantified relational clones. While this classification is well known and can be obtained via Post's lattice, the approach presented here provides a direct way to obtain it and, in turn, enables a direct proof of Post's lattice itself. Since the proof follows naturally from the previously developed tools and reduces to a routine verification, requiring no additional ideas, we omit the details and instead summarize the classification.
\begin{theorem}\label{thm:qpp_post}
Figure~\ref{fig:qpp_post} shows all the $1$-sorted quantified relational clones on~$\{0,\,1\}$ ordered by inclusion.
\end{theorem}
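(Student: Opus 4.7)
The plan is to use the canonical-relation framework from Section~\ref{sec:canpreds} together with the closure criteria from Section~\ref{sec:closednessandpost} to reduce the classification to a mechanical enumeration. By Theorem~\ref{thm:CR}, every $1$-sorted quantified relational clone $C$ on $\{0,1\}$ is uniquely determined by $C\cap\CR^1$, so it suffices to describe the qpp-closed subsets of $\CR^1$.

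First I would enumerate $\CR^1$ explicitly. Specializing Definition~\ref{def:canonicalrelations} to $k=1$, types (c3) and (c6) are vacuous since both require at least two distinct sorts; the remaining types yield exactly the relation $x=0\lor y=1$ of type (c1), the two relations $x=y\lor u=b$ with $b\in\{0,1\}$ of type (c2), the relation $x+y=1$ of type (c4), the relation $x+y=u+v$ of type (c5), and the infinite family $R_m^b\coloneqq(x_1=b\lor\cdots\lor x_m=b)$ for $m\geq 1$ and $b\in\{0,1\}$ of type (c7). So $\CR^1$ consists of six sporadic relations together with two parameterized families.

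Next I would bound the behaviour of the (c7) family inside qpp. Applying universal quantification (eo5) gives $\forall x_{m+1}R_{m+1}^b=R_m^b$, so for each fixed $b$ the (c7)-part of any closed set is downward closed in $m$. Combined with the mechanical rewriting rules of Remark~\ref{rem:applyingeo}, which show that elementary operations (eo1)--(eo5) applied to relations in $\KR^1$ never \emph{increase} the width of a $=b$-disjunction, this forces the (c7)-part of a closed set to be, for each $b$, either empty, a finite initial segment $\{R_1^b,\dots,R_{m_0}^b\}$, or the whole family. Moreover, Theorem~\ref{thm:expressedbyconjunctions} guarantees that no (c7) relation is ever produced by the final (eo6) conjunction step, so the (c7)-content of $\qpp{S}\cap\CR^1$ is determined already at the level of $\eo{S}{5}$.

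With the (c7) families under control, the remaining task is a finite case analysis: for every choice among the six sporadic canonical relations together with an admissible (c7)-pattern for each $b\in\{0,1\}$, I would apply Remark~\ref{rem:closedness} to compute $\qpp{S}\cap\CR^1$, using Remark~\ref{rem:applyingeo} and Observation~\ref{thm:KRclosedoneo5} for the (eo1)--(eo5) step and testing each of the finitely many non-(c7) canonical relations against conjunctions of members of $\eo{S}{5}$ for the (eo6) step. Distinctness of the resulting closures is guaranteed by Theorem~\ref{thm:propCR}(2), and all inclusions in Figure~\ref{fig:qpp_post} can be read off directly from the canonical generators of each clone. The main obstacle is organising the cross-interactions between the $R_m^0$ and $R_m^1$ families and the sporadic relations under composition -- for instance, composing $R_2^0$ with $R_2^1$ along a shared variable produces the (c1) relation $x=0\lor y=1$ -- so that every induced closure is accounted for; once this bookkeeping is complete, the stated Hasse diagram follows.
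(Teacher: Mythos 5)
Your overall strategy -- reduce to $\CR^1$ via Theorem~\ref{thm:CR} and then run the mechanical machinery of Remark~\ref{rem:applyingeo}, Remark~\ref{rem:closedness} and Theorem~\ref{thm:expressedbyconjunctions} over a finite case analysis -- is exactly the paper's, but your enumeration of $\CR^1$ is incorrect, and this breaks the case analysis. The restriction $i\neq j$ in Definition~\ref{def:canonicalrelations}~(c3) concerns only the two-sort form $x^i=y^i\lor u^j=v^j$; the one-sort form $x^i=y^i\lor y^i=z^i$ is canonical for every $i\in E_k$, in particular for $k=1$. This is confirmed by the second statement of Lemma~\ref{thm:canon3}, by the row for $x^i=y^i\lor y^i=z^i$ in Table~\ref{tab:qppcanon}, and by the vertex labelled $x=y\lor y=z$ in Figure~\ref{fig:qpp_post} itself. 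So besides the relations on your list, $\CR^1$ also contains the ternary relation $x=y\lor y=z$ (you are right that (c6) is vacuous for $k=1$).

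The omission is not cosmetic. By Table~\ref{tab:qppcanon}, $\qpp{x=y\lor y=z}$ consists (up to conjunctions and trivial relations) of disjunctions of linear equations in which every equation has an even number of variables; hence it strictly contains $\qpp{x+y=u+v}$, is strictly below the top clone, and by Theorem~\ref{thm:propCR}(2) it cannot be generated by canonical relations whose individual qpp-closures are strictly smaller. Consequently, a case analysis ranging only over your ``six sporadic relations plus the two (c7) families'' can never produce the quantified relational clone sitting at the vertex $x=y\lor y=z$ of Figure~\ref{fig:qpp_post}, nor correctly account for the part of the diagram above it, so the resulting classification would be a proper part of the claimed one. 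Your handling of the (c7) families and of the cross-compositions producing (c1)/(c2) relations is in the spirit of the paper's verification; the missing (c3) generator is the genuine gap, and once it is added the plan coincides with the paper's outline (minimality of the atoms, verification of the inclusions among canonical generators, computation of joins and closedness checks).
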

\begin{figure}[ht]
    \centering
    \begin{tikzcd}[scale cd=.57, column sep=0.5em, row sep=1.5em, ampersand replacement=\&, cramped, crossing over clearance=.75ex]
	\&\&\&\& \bullet \\
	\&\& {\disj{\begin{array}{r@{\mskip\medmuskip}c@{\mskip\medmuskip}l}
        x &=& y\\
        u &=& 0
    \end{array}}} \&\&\&\& {\disj{\begin{array}{r@{\mskip\medmuskip}c@{\mskip\medmuskip}l}
        x &=& y\\
        u &=& 1
    \end{array}}} \\
	\&\&\&\& {\disj{\begin{array}{r@{\mskip\medmuskip}c@{\mskip\medmuskip}l}
        x &=& y\\
        y &=& z
    \end{array}}} \\
	\bullet \&\&\&\& \bullet \&\&\&\& \bullet \\
	\bullet \&\&\& \bullet \& \bullet \& \bullet \&\&\& \bullet \\
	\begin{array}{c} \bigvee_{i=1}^n x_i = 0\\ \text{for every } n\in\N \end{array} \& \bullet \&\&\& {x+y=u+v} \&\&\& \bullet \& \begin{array}{c} \bigvee_{i=1}^n x_i = 1\\ \text{for every } n\in\N \end{array} \\
	\& \bullet \& \bullet \&\& \bullet \&\& \bullet \& \bullet \\
	\& {\bigvee_{i=1}^4 x_i = 0} \& \bullet \& \bullet \& \bullet \& \bullet \& \bullet \& {\bigvee_{i=1}^4 x_i = 1} \\
	\&\& {\bigvee_{i=1}^3 x_i = 0} \& \bullet \& {\disj{\begin{array}{r@{\mskip\medmuskip}c@{\mskip\medmuskip}l}
        x_1 &=& 0\\
        x_2 &=& 1
    \end{array}}} \& \bullet \& {\bigvee_{i=1}^3 x_i = 1} \\
	\&\& {\disj{\begin{array}{r@{\mskip\medmuskip}c@{\mskip\medmuskip}l}
        x_1 &=& 0\\
        x_2 &=& 0
    \end{array}}} \&\& \bullet \&\& {\disj{\begin{array}{r@{\mskip\medmuskip}c@{\mskip\medmuskip}l}
        x_1 &=& 1\\
        x_2 &=& 1
    \end{array}}} \\
	\&\&\& {x=0} \& {x+y=1} \& {x=1} \\
	\&\&\&\& {x+y=0}
	\arrow[curve={height=-30pt}, no head, from=4-5, to=1-5]
	\arrow[curve={height=9pt}, no head, from=5-5, to=3-5]
        \arrow[curve={height=-21pt}, no head, from=10-5, to=8-5]
        \arrow[curve={height=-18pt}, no head, from=11-4, to=5-4]
	\arrow[curve={height=-54pt}, no head, from=11-5, to=5-5]
	\arrow[curve={height=24pt}, no head, from=11-5, to=8-5]
	\arrow[curve={height=18pt}, no head, from=11-6, to=5-6]
	\arrow[curve={height=36pt}, no head, from=12-5, to=6-5]
	\arrow[curve={height=30pt}, no head, from=4-5, to=8-5, crossing over]
	\arrow[curve={height=60pt}, no head, from=7-5, to=1-5, crossing over]
	\arrow[no head, from=2-3, to=1-5]
	\arrow[no head, from=2-7, to=1-5]
	\arrow[no head, from=3-5, to=1-5]
	\arrow[no head, from=4-1, to=2-3]
	\arrow[no head, from=4-9, to=2-7]
	\arrow[no head, from=5-1, to=4-1]
	\arrow[no head, from=5-4, to=4-5]
	\arrow[no head, from=5-5, to=4-5]
	\arrow[no head, from=5-6, to=4-5, crossing over]
	\arrow[no head, from=5-9, to=4-9]
	\arrow[no head, from=6-1, to=5-1]
	\arrow[dashed, no head, from=6-2, to=4-1]
	\arrow[no head, from=6-5, to=5-4, crossing over]
	\arrow[no head, from=6-5, to=5-5]
	\arrow[no head, from=6-5, to=5-6]
	\arrow[dashed, no head, from=6-8, to=4-9]
	\arrow[no head, from=6-9, to=5-9]
	\arrow[dashed, no head, from=7-2, to=5-1]
	\arrow[no head, from=7-2, to=6-2]
	\arrow[no head, from=7-3, to=6-2]
	\arrow[no head, from=7-7, to=6-8]
	\arrow[dashed, no head, from=7-8, to=5-9]
	\arrow[no head, from=7-8, to=6-8]
	\arrow[dashed, no head, from=8-2, to=6-1]
	\arrow[no head, from=8-2, to=7-2]
	\arrow[no head, from=8-3, to=7-2]
	\arrow[no head, from=8-3, to=7-3]
	\arrow[no head, from=8-4, to=7-3, crossing over]
	\arrow[no head, from=8-4, to=7-5, crossing over]
	\arrow[no head, from=8-5, to=7-5]
	\arrow[no head, from=8-6, to=7-5, crossing over]
	\arrow[no head, from=8-6, to=7-7, crossing over]
	\arrow[no head, from=8-7, to=7-7]
	\arrow[no head, from=8-7, to=7-8]
	\arrow[dashed, no head, from=8-8, to=6-9]
	\arrow[no head, from=8-8, to=7-8]
	\arrow[no head, from=9-3, to=8-2]
	\arrow[no head, from=9-3, to=8-3]
	\arrow[no head, from=9-4, to=8-3, crossing over]
	\arrow[no head, from=9-4, to=8-4]
	\arrow[no head, from=9-5, to=8-4, crossing over]
	\arrow[no head, from=9-5, to=8-6, crossing over]
	\arrow[no head, from=9-6, to=8-6]
	\arrow[no head, from=9-6, to=8-7, crossing over]
	\arrow[no head, from=9-7, to=8-7]
	\arrow[no head, from=9-7, to=8-8]
	\arrow[no head, from=10-3, to=9-3]
	\arrow[no head, from=10-3, to=9-4, crossing over]
	\arrow[no head, from=10-5, to=9-4]
	\arrow[no head, from=10-5, to=9-5]
	\arrow[no head, from=10-5, to=9-6, crossing over]
	\arrow[no head, from=10-7, to=9-6, crossing over]
	\arrow[no head, from=10-7, to=9-7]
	\arrow[no head, from=11-4, to=10-3]
	\arrow[no head, from=11-4, to=10-5, crossing over]
	\arrow[no head, from=11-6, to=10-5, crossing over]
	\arrow[no head, from=11-6, to=10-7]
	\arrow[no head, from=12-5, to=11-4]
	\arrow[no head, from=12-5, to=11-5]
	\arrow[no head, from=12-5, to=11-6]
\end{tikzcd}

%\disj{\begin{array}{r@{\mskip\medmuskip}c@{\mskip\medmuskip}l}
%        x_1 &=& 0\\
%        x_2 &=& 0
%    \end{array}}
    \caption{Hasse diagram of all 1-sorted quantified relational clones on $\{0,\,1\}$. Each vertex denotes a quantified relational clone generated by all the relations below it.}
    \label{fig:qpp_post}
\end{figure}
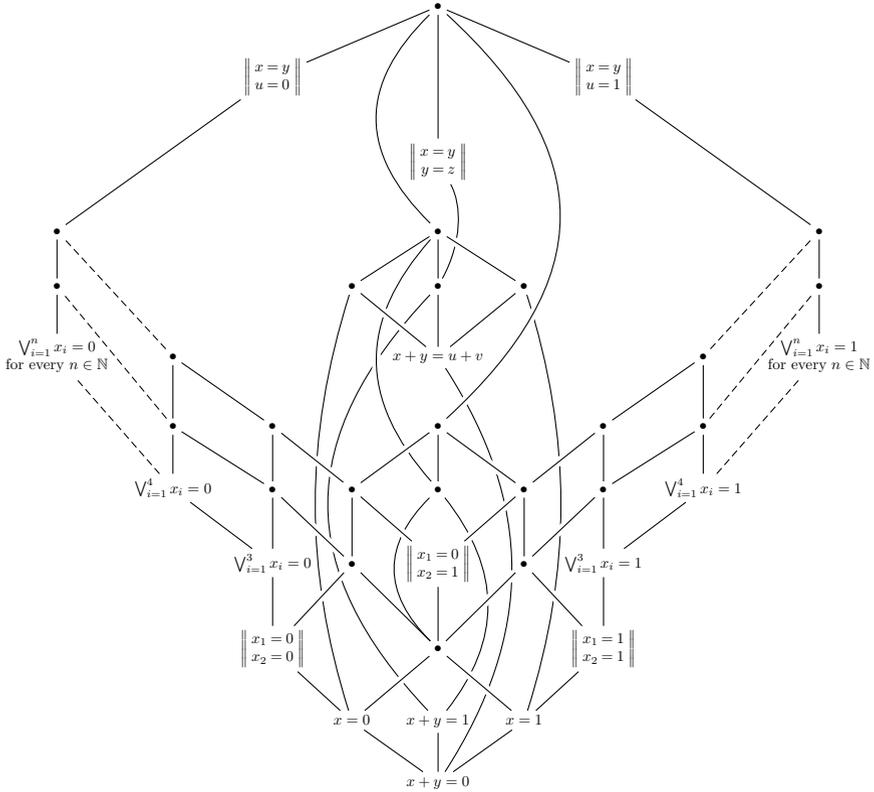
\begin{proof}[Proof (outline)]
It suffices to:
\begin{itemize}
    \item Verify that the relational clones generated by $x=0$, $x=1$, $x+y=1$, and $x+y=u+v$ are minimal by checking that their qpp closure does not contain any other canonical relation.
    \item Verify all the presented inclusions $\rho\in\qpp{\sigma}$ for $\rho,\,\sigma\in\CR^1$.
    \item Compute all joins in the lattice and check for closedness.
\end{itemize}
Each of these steps follows directly from the tools introduced in the previous section, mainly in Remark~\ref{rem:applyingeo} and Remark~\ref{rem:closedness}. For some details, we also point to the description of all quantified relational clones generated by a single canonical relation, as given in Table~\ref{tab:qppcanon} and Theorem~\ref{thm:qppcanon}.
\end{proof}

By the $\sPol$-$\Inv$ Galois connection (Theorem \ref{thm:Galois}), quantified relational clones correspond to surjective clones. Thus, Theorem~\ref{thm:qpp_post} also implicitly provides the classification of surjective clones.

Since the only non-surjective operations on $\{0,\,1\}$, up to arity, are the two constant operations, the classification of all clones -- Post's lattice theorem -- follows as a direct corollary of Theorem~\ref{thm:qpp_post}. We formalize this in the following remark.
\begin{remark}[Post's lattice theorem \cite{Post1920,Post1941}] \label{rem:post}
Let $\mathfrak{0}$ and $\mathfrak{1}$ denote the unary constant operations returining $0$ and $1$, respectively.

It follows directly from the definition that every clone is obtained by adding none, one, or both the constant operation to each surjective clone. That is, for each quantified relational clone $S$ it is enough to consider the following (not necessarily distinct) clones
\begin{equation*}
    \Pol S\,,\quad \Clo(\Pol S\cup\{\mathfrak{0}\})\,,\quad \Clo(\Pol S\cup\{\mathfrak{1}\})\,,\quad \Clo(\Pol S\cup\{\mathfrak{0},\,\mathfrak{1}\})\,.
\end{equation*}

Moreover, this classification can be derived without explicit clone computations. By the well-known $\Pol$-$\Inv$ Galois connection (see \cite{Geiger1968,Bodnarchuk1969I,Bodnarchuk1969II} or Corollary~\ref{thm:invpol}), we obtain
\begin{align*}
    \Inv\Pol S &= S\,,\\
    \Inv\Clo(\Pol S\cup\{\mathfrak{0}\}) &= S\cap \Inv\{\mathfrak{0}\}\,,\\
    \Inv\Clo(\Pol S\cup\{\mathfrak{1}\}) &= S\cap \Inv\{\mathfrak{1}\}\,,\\
    \Inv\Clo(\Pol S\cup\{\mathfrak{0},\,\mathfrak{1}\}) &= S\cap \Inv\{\mathfrak{0},\,\mathfrak{1}\}\,.
\end{align*}
This follows from the fact that in the $\Pol$-$\Inv$ connection, joins in the lattice of clones correspond to meets in the lattice of relational clones. 

Thus, to classify all clones, it suffices to compute
\begin{equation*}
    S\,,\quad S\cap \Inv\{\mathfrak{0}\}\,,\quad S\cap \Inv\{\mathfrak{1}\}\,,\quad S\cap \Inv\{\mathfrak{0},\,\mathfrak{1}\}\,.
\end{equation*}

To simplify this even further, suppose $S = \qpp{T}$ for some $T\subseteq \CR^1$. Then we claim:
\begin{claim}
\begin{align*}
    S\cap\Inv\{\mathfrak{0}\} &= \eoc{\eo{T}{5}\cap\Inv\{\mathfrak{0}\}}\,,\\
    S\cap\Inv\{\mathfrak{1}\} &= \eoc{\eo{T}{5}\cap\Inv\{\mathfrak{1}\}}\,,\\
    S\cap\Inv\{\mathfrak{0},\,\mathfrak{1}\} &= \eoc{\eo{T}{5}\cap\Inv\{\mathfrak{0},\,\mathfrak{1}\}}\,.
\end{align*}
\end{claim}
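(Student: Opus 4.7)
The plan is to prove each of the three equalities by the same template: the argument rests entirely on Lemma~\ref{thm:eo}(2) together with the trivial pointwise behavior of conjunction at a fixed tuple, so I will work out the case $\Inv\{\mathfrak{0}\}$ explicitly and note that the other two cases are verbatim identical, substituting $\boldsymbol{1}$ for $\boldsymbol{0}$ (respectively, testing both tuples simultaneously) throughout.

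For the inclusion $\supseteq$, I would observe that $\Inv\{\mathfrak{0}\}$ is closed under conjunction of predicates of matching arity: if $\sigma_1(\boldsymbol{0}) = \sigma_2(\boldsymbol{0}) = \top$, then $(\sigma_1\land\sigma_2)(\boldsymbol{0}) = \top$. This immediately gives $\eoc{\eo{T}{5}\cap\Inv\{\mathfrak{0}\}}\subseteq\Inv\{\mathfrak{0}\}$. Moreover, since $\eo{T}{5}\cap\Inv\{\mathfrak{0}\}\subseteq\eo{T}{5}$, applying $\eoc{-}$ and using Theorem~\ref{thm:eoconj} gives $\eoc{\eo{T}{5}\cap\Inv\{\mathfrak{0}\}}\subseteq\eoc{\eo{T}{5}} = \qpp{T} = S$, so combining these two inclusions yields the desired containment in $S\cap\Inv\{\mathfrak{0}\}$.

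For the harder inclusion $\subseteq$, I would take $\rho\in S\cap\Inv\{\mathfrak{0}\}$ and apply Lemma~\ref{thm:eo}(2) to write $\rho(x_1,\dots,x_n) = \sigma_1(x_1,\dots,x_n)\land\cdots\land\sigma_l(x_1,\dots,x_n)$ with each $\sigma_i\in\eo{T}{5}$. If $\rho$ is nonempty, then preservation by $\mathfrak{0}$ forces $\boldsymbol{0}\in\rho$, so the defining conjunction gives $\sigma_i(\boldsymbol{0}) = \top$ for every $i$; hence each $\sigma_i\in\eo{T}{5}\cap\Inv\{\mathfrak{0}\}$ and $\rho\in\eoc{\eo{T}{5}\cap\Inv\{\mathfrak{0}\}}$ by (eo6). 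The only real subtlety is the edge case of an empty $\rho$ of arity $n$: it lies in $\Inv\{\mathfrak{0}\}$ vacuously and can be obtained from $\sigma_\bot$ by appending $n$ dummy variables via (eo1), so $\rho$ itself is already in $\eo{T}{5}\cap\Inv\{\mathfrak{0}\}$. I do not anticipate any serious obstacle; the only conceptual step is the pointwise collapse of the conjunction at $\boldsymbol{0}$, and the empty-relation edge case is the one place where I would be most careful in the writeup.
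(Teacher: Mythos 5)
Your proposal is correct and follows essentially the same route as the paper: reduce via Theorem~\ref{thm:eoconj} (equivalently Lemma~\ref{thm:eo}(2)) to writing each $\rho\in S$ as a conjunction of relations from $\eo{T}{5}$, and observe that preservation by a constant operation passes between a conjunction and its conjuncts. In fact you are slightly more careful than the paper's one-line justification, since you explicitly handle the empty relation (where the ``only if'' direction of the conjunct argument would otherwise fail) by realizing it from $\sigma_\bot$ via (eo1).
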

\begin{proof}
We prove the first equality; the remaining cases are analogous. Theorem~\ref{thm:eoconj} gives the following equality:
\begin{equation*}
    S\cap\Inv\{\mathfrak{0}\} = \eoc{\eo{T}{5}}\cap\Inv\{\mathfrak{0}\}\,.
\end{equation*}
Hence, it is enough to observe that
\begin{equation*}
    \eoc{\eo{T}{5}}\cap\Inv\{\mathfrak{0}\} = \eoc{\eo{T}{5}\cap\Inv\{\mathfrak{0}\}}\,.
\end{equation*}
This holds because a relation defined as a conjunction of other relations is preserved by $\mathfrak{0}$ if and only if all the conjuncts are preserved by $\mathfrak{0}$.
\end{proof}

Thus, to obtain the full classification of $1$-sorted relational clones on $\{0,\,1\}$, i.e. the relational counterpart of the Post's lattice, it suffices to:
\begin{itemize}
    \item Compute $\eo{T}{5}$ for all sets $T\subseteq \CR^1$ described by Theorem~\ref{thm:qpp_post}.
    \item Compute the intersections of $\eo{T}{5}$ with $\Inv\{\mathfrak{0}\}$, $\Inv\{\mathfrak{1}\}$, and $\Inv\{\mathfrak{0},\,\mathfrak{1}\}$, which amounts to selecting relations preserved by these constant operations.
\end{itemize}
This yields the classification of $1$-sorted relational clones, which correspond to $1$-clones via the $\Pol$-$\Inv$ Galois connection, thereby providing a concise and straightforward proof of Post's lattice theorem \cite{Post1920,Post1941}.
\end{remark}

%%%%%%%%%%%%%%%%%%%%%%%%%%%%%%%%%%%%%%%%%%%%%%%%%%%%%%%%%%%%%%%%%%%%%%
\section{Lattice of multi-sorted clones}\label{sec:lattice}
In this section, we use the description of canonical relations to study the properties of the lattice of $k$-sorted quantified relational clones. The main result, Theorem \ref{thm:embedding}, particularly states that the lattice in question has quite a simple structure and admits an embedding into the product of a finite poset with the poset of downsets in $\N_0^{2k}$. Noticing the inductive structure of a full lattice of $k$-clones in Subsection \ref{sec:inductivestructure}, we then get a short proof of Taimanov's theorem \cite{Taimanov1983,Taimanov1984thesis,Taimanov2019,Taimanov2020,Taimanov2022} about the finiteness of generating set of every $k$-clone.

In this section, all observations follow directly from the definitions and are therefore presented without proofs. Furthermore, the proofs of all lemmata are deferred to Subsection \ref{sec:defproofslattice} due to their technical nature.
\subsection{Embedding of the lattice}
We assume the reader is familiar with the notion of partially ordered sets (shortly, posets) and begin by defining some perhaps less established concepts.
\begin{definition}[Closure operator]
Let $A$ be a~nonempty set. An~\textit{algebraic closure operator} on $A$ is a map $\cls: 2^A\to 2^A$ with the following properties
\begin{enumerate}[label=\textup{(cl\arabic*)},leftmargin=1.4\parindent]
    \item $B\subseteq \cls B$\hspace*{\fill} for all $B\subseteq A$,
    \item $B\subseteq C\implies \cls B\subseteq \cls C$ \hspace*{\fill} for all $B,\,C\subseteq A$,
    \item $\cls\cls B = \cls B$ \hspace*{\fill} for all $B\subseteq A$,
    \item $\cls B = \bigcup_{C\,\subseteq\, B,\, |C|\,<\,\infty} \cls C$\hspace*{\fill} for all $B\subseteq A$.
\end{enumerate}
A~set $B\subseteq A$ is called \textit{closed} with respect to $\cls$ if $B = \cls B$. We say a closed set $B$ is \textit{finitely generated} if there is a finite set $C\subseteq A$ such that $B = \cls C$.

By $\mathfrak{L}_{\cls}(A)$ we denote the set $\{B\subseteq A\mid B = \cls B\}$ of all closed subsets of $A$.
\end{definition}
Although, in general, closure operators are not required to satisfy the condition~(cl4), we will only be concerned with algebraic closure operators. We may also notice that the set $\mathfrak{L}_{\cls}(A)$ ordered by inclusion has a structure of a lattice, which, in particular, means that all the multi-sorted clones (or quantified relational clones, etc.) ordered by inclusion form a lattice; see Observation~\ref{thm:closureoperators}. However, we do not assume nor require this property in any of the following claims.
\begin{definition}[Order theory concepts I]
Let $(P,\,\leq)$ be a~poset. Any set $D\subseteq P$ satisfying
\begin{equation*}
    \forall\,q\in D\,\forall\,p\in P: p\leq q\implies p\in D
\end{equation*}
is called a~\textit{downset}; dually, we define an~\textit{upset}.

For $Q\subseteq P$ we denote
\begin{align*}
    \upset_\leq Q &\coloneqq \{p\in P\mid \exists\,q\in Q:q\leq p\}\,,\quad\text{(upset generated by $Q$)}\\
    \downset_\leq Q &\coloneqq \{p\in P\mid \exists\,q\in Q:p\leq q\}\,.\quad\text{(downset generated by $Q$)}
\end{align*}

Let $(Q,\,\preceq)$ be a poset. We say that $(Q,\,\preceq)$ \textit{embeds} into $(P,\,\leq)$ if there is an injective map $\nu$ from $Q$ to $P$, which is order-preserving, that is
\begin{equation*}
    p\preceq q \implies \nu(p)\leq\nu(q)
\end{equation*}
for all $p,\,q\in Q$.

Let $(P_1,\,\leq_{P_1})$, $(P_2,\,\leq_{P_2})$ be posets. We define the \textit{product ordering} $\preceq$ on $P_1\times P_2$ by the following rule:
\begin{equation*}
    \left[(p_1,\,p_2)\preceq(p_1',\,p_2')\right] \iff \left[(p_1\leq_{P_1} p_1') \text{ and } (p_2\leq_{P_2} p_2')\right]\,.
\end{equation*}
To denote product posets, we will also use the notation
\begin{equation*}
    \prod_{i\,=\,1}^n (P_i,\,\leq_i) \coloneqq (P_1,\,\leq_1)\times\dots\times(P_n,\,\leq_n)\,.
\end{equation*}
\end{definition}
\begin{observation}\label{thm:closureoperators}
Let $(P,\,\leq)$ be a poset. The mappings $\upset_\leq$ and $\downset_\leq$ are algebraic closure operators on $P$. The closed sets are the upsets and the downsets respectively.

The mappings $\Clo^k$, $\sClo^k$, and $\qpp[k]{-}$ are algebraic closure operators on $\OO_{A}^k$, $\OO_{A}^k$, and $R_A^k$ respectively.
\end{observation}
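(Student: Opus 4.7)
The plan is to verify the four axioms (cl1)--(cl4) separately for each of the five operators listed, although almost all the work is routine once one writes down the definitions carefully.

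First, for $\upset_\leq$ and $\downset_\leq$, axiom (cl1) follows from the reflexivity of $\leq$ (for any $b\in B$ we have $b\leq b$, so $b\in\upset_\leq B$, and dually), while (cl2) is immediate: an element witnessing membership in the closure of $B$ also witnesses membership in the closure of any larger set. For (cl3) we use transitivity of $\leq$: if $p\in\upset_\leq\upset_\leq B$, pick $q\in\upset_\leq B$ with $q\leq p$ and then $r\in B$ with $r\leq q$; transitivity gives $r\leq p$. Axiom (cl4) is trivial here because membership in $\upset_\leq B$ (resp.\ $\downset_\leq B$) is witnessed by a single element of $B$, so the singleton suffices. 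The characterization of the closed sets is then just a rephrasing of the definition of upset/downset: a set equals its upward closure iff it contains every element above any of its members, which is precisely the definition of an upset.

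Next, for the operators $\Clo^k$, $\sClo^k$, and $\qpp[k]{-}$, axioms (cl1)--(cl3) follow from the observation that each of them produces the smallest set that (i) contains the input, (ii) contains the appropriate ``trivial'' elements (projections, or equality and empty predicates), and (iii) is closed under the corresponding operations (composition; composition restricted to surjective tuples; conjunction, existential and universal quantification). Indeed, (cl1) is the ``contains the input'' clause, (cl2) follows because the defining closure conditions are inherited when enlarging the generating set, and (cl3) follows because the already-closed set $\Clo^k\mathcal{F}$ (resp.\ $\sClo^k\mathcal{F}$, $\qpp[k]{S}$) equals its own further closure.

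The only axiom requiring a brief argument is the algebraicity condition (cl4). For $\qpp[k]{-}$, every element $\rho\in\qpp[k]{S}$ is by definition specified by a single first-order formula using finitely many predicate symbols from $S\cup\{\sigma_\bot,\,\sigma_{=}^1,\dots,\,\sigma_{=}^k\}$; taking $S'\subseteq S$ to be the finite set of predicates from $S$ used in the formula gives $\rho\in\qpp[k]{S'}$. The argument for $\Clo^k$ and $\sClo^k$ is analogous: any $\boldsymbol{f}\in\Clo^k\mathcal{F}$ can be built from projections and elements of $\mathcal{F}$ by a finite composition tree, and the finite set $\mathcal{F}'\subseteq\mathcal{F}$ of elements appearing in that tree witnesses $\boldsymbol{f}\in\Clo^k\mathcal{F}'$; for $\sClo^k$ this relies on the fact that $\sClo^k\mathcal{F}=\{\boldsymbol{f}\in\Clo^k\mathcal{F}\mid\boldsymbol{f}\text{ surjective}\}$, so the same finite witness works. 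There is no real obstacle in this proof; the only minor subtlety is being careful with the surjectivity requirement for $\sClo^k$, but since surjectivity depends only on $\boldsymbol{f}$ itself and not on the generating set, the restriction passes cleanly through all four axioms.
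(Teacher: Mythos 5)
The paper presents this as an unproved observation (its Section~4 convention is that observations ``follow directly from the definitions''), so there is no proof of record to compare against; the only question is whether your verification is sound. For $\upset_\leq$, $\downset_\leq$, $\Clo^k$ and $\qpp[k]{-}$ it is: extensivity, monotonicity, idempotence (via the standard substitution/unfolding of generating terms or formulas, which your phrasing slightly glosses but correctly invokes), and algebraicity via finite witnesses are all handled correctly, as is the identification of the closed sets with upsets and downsets.

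Your treatment of $\sClo^k$, however, contains a genuine error. You claim that ``the restriction passes cleanly through all four axioms,'' but extensivity (cl1) fails for $\sClo^k$ as an operator on all of $\OO_A^k$: by definition $\sClo^k\mathcal{F}=\{\boldsymbol{f}\in\Clo^k\mathcal{F}\mid\boldsymbol{f}\text{ is surjective}\}$ contains only surjective operations, so for any $\mathcal{F}$ containing a non-surjective $k$-operation (e.g.\ one whose $i$-th coordinate is a constant on $\{0,\,1\}$) we have $\mathcal{F}\not\subseteq\sClo^k\mathcal{F}$. Your justification --- that surjectivity depends only on $\boldsymbol{f}$ and not on the generating set --- does give (cl2) and (cl4) (intersecting with the surjective operations commutes with the directed union over finite subsets), and (cl3) also holds on the full power set, since every surjective member of $\Clo^k(\sClo^k\mathcal{F})\subseteq\Clo^k\mathcal{F}$ already lies in $\sClo^k\mathcal{F}$, while conversely $\sClo^k\mathcal{F}$ consists of surjective elements of $\Clo^k(\sClo^k\mathcal{F})$; but none of this rescues (cl1). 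The statement is true only when the ambient set for $\sClo^k$ is taken to be the set of surjective $k$-operations (equivalently, when $\sClo^k$ is viewed as the restriction $\cls'C=(\cls C)\cap B$ of $\cls=\Clo^k$ to $B=\{\text{surjective }k\text{-operations}\}$, which is precisely what the paper's next observation supplies). A correct write-up should either prove that restricted version or explicitly flag that, read literally on $\OO_A^k$, axiom (cl1) fails; waving the surjectivity issue through as a ``minor subtlety'' is the one step in your argument that does not hold.
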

\begin{observation}
Let $A$ be a nonempty set and $\cls$ an algebraic closure operator on $A$. For $B\subseteq A$, the mapping $\cls':2^B\to 2^B$ defined by the following rule:
\begin{equation*}
    \cls'C = (\cls{C})\cap B\,,
\end{equation*}
is an algebraic closure operator on $B$.
\end{observation}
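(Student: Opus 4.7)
The plan is a straightforward verification of the four axioms (cl1)--(cl4) for the induced operator $\cls'$ on $B$; the claim is purely formal and the main obstacle is simply being careful about where the intersection with $B$ sits in each chain of inclusions. I would fix an arbitrary $C\subseteq B$ (and, for (cl2), an arbitrary $D\subseteq B$ with $C\subseteq D$) and check each property in turn.

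For (cl1), I would note that $C\subseteq \cls C$ by (cl1) for $\cls$, and $C\subseteq B$ by assumption, so $C\subseteq \cls C\cap B=\cls' C$. For (cl2), applying (cl2) for $\cls$ gives $\cls C\subseteq \cls D$, and intersecting both sides with $B$ yields $\cls' C\subseteq \cls' D$. For (cl3), one inclusion follows from (cl1) applied to $\cls'$ (which I have just verified). For the reverse, I would use $\cls C\cap B\subseteq \cls C$, apply (cl2) and (cl3) for $\cls$ to get $\cls(\cls C\cap B)\subseteq \cls\cls C=\cls C$, and then intersect with $B$ to conclude $\cls'\cls' C=\cls(\cls C\cap B)\cap B\subseteq \cls C\cap B=\cls' C$.

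The only step worth thinking about is (cl4), and even this is immediate once one observes that intersection with $B$ commutes with arbitrary unions. More precisely, using (cl4) for $\cls$,
\begin{equation*}
    \cls' C=\cls C\cap B=\Bigl(\bigcup_{\substack{D\subseteq C\\|D|<\infty}}\cls D\Bigr)\cap B=\bigcup_{\substack{D\subseteq C\\|D|<\infty}}(\cls D\cap B)=\bigcup_{\substack{D\subseteq C\\|D|<\infty}}\cls' D\,,
\end{equation*}
where every finite $D\subseteq C$ is automatically a subset of $B$ since $C\subseteq B$, so $\cls' D$ is well-defined. This completes the verification of all four axioms.

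In summary, I anticipate no genuine obstacle: each property of $\cls'$ reduces in one or two lines to the corresponding property of $\cls$ together with elementary set-theoretic manipulations involving the intersection with $B$. The whole argument can be written in well under a page.
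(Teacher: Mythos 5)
Your verification is correct in all four axioms, and it is exactly the routine check the paper has in mind: the statement appears there as an Observation, explicitly left without proof because it follows directly from the definitions, which is what your argument confirms.
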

\begin{definition}
Let $k\in\N$. By $\CR^k_{1-6}\subseteq\CR^k$ we denote the (finite) set of all $k$-sorted canonical relations of the types (c1)--(c6) and by $\CR^k_{7}\subseteq\CR^k$ we denote the set of all $k$-sorted canonical relations of the type (c7).
\end{definition}
\begin{observation}\label{thm:predtotuple}
Let $k\in\N$. We define a mapping $\mu: \CR_7^k\to \N_0^{2k}$ by the following rule.
\begin{multline*}
    \mu\left(\disj{\begin{array}{r@{\mskip\medmuskip}c@{\mskip\medmuskip}c@{\mskip\medmuskip}c@{\mskip\medmuskip}l}
        x_1^{1} = 0 \,\lor & \cdots & \lor\, x_{m_1}^{1} = 0 \lor y_1^{1} = 1 \,\lor & \cdots & \lor\, y_{n_1}^{1} = 1  \\
        x_1^{2} = 0 \,\lor & \cdots & \lor\, x_{m_2}^{2} = 0 \lor y_1^{2} = 1 \,\lor & \cdots & \lor\, y_{n_2}^{2} = 1\\
        &\vdots&&\vdots& \\
        x_1^{k} = 0 \,\lor & \cdots & \lor\, x_{m_k}^{k} = 0 \lor y_1^{k} = 1 \,\lor & \cdots & \lor\, y_{n_k}^{k} = 1
    \end{array}}\right) \coloneqq \\\coloneqq(m_1,\,n_1,\,m_2,\,n_2,\dots,\,m_k,\,n_k)\,,
\end{multline*}
where $m_i,\,n_i\in\N_0$ and either $m_i = 0$ or $n_i = 0$ for all $i\in E_k$. Then
\begin{equation*}
    \mu(\rho)\preceq \mu(\rho') \iff \qpp{\rho}\subseteq\qpp{\rho'}
\end{equation*}
for all $\rho,\,\rho'\in\CR_7^k$.
\end{observation}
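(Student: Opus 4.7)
The plan is to prove the biconditional by handling the two directions separately. The forward direction is constructive and relies only on the elementary operations (eo2), (eo3), and (eo5). The converse uses Theorem~\ref{thm:expressedbyconjunctions} to reduce the problem to an induction on derivations by elementary operations, and the main difficulty will be in controlling (eo4).

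For the forward direction, assuming $\mu(\rho)\preceq\mu(\rho')$, the sign constraint ``$m_i=0$ or $n_i=0$'' combined with the componentwise ordering forces every sort appearing in $\rho$ to carry the same value $b_i$ as in $\rho'$. I would realise $\rho$ from $\rho'$ by universally quantifying, for each sort~$i$, exactly $m_i'-m_i$ of the variables bearing the disjunct $x^i=0$ and $n_i'-n_i$ of those bearing $x^i=1$. Each such quantification removes exactly one disjunct, via the elementary identity $\forall y\,[(y=b)\lor\psi]=\psi$ visible directly from the disjunctive representation. A final variable permutation via (eo2) yields a relation identical to $\rho$, so $\rho\in\qpp{\rho'}$ and hence $\qpp{\rho}\subseteq\qpp{\rho'}$.

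For the converse, $\rho\in\qpp{\rho'}$ together with $\rho\in\CR^k_7$ yields $\rho\in\eo{\{\rho'\}}{5}$ by Theorem~\ref{thm:expressedbyconjunctions}. I would then show, by induction on the length of a derivation from $\rho'$ and the equality relations $\sigma_{=}^i$, that every type-(c7) relation appearing in $\eo{\{\rho'\}}{5}$ has its $\mu$-tuple componentwise bounded by $\mu(\rho')$. Observation~\ref{thm:KRclosedoneo5} keeps every intermediate relation inside $\KR^k$, and Remark~\ref{rem:applyingeo} makes the effect of (eo1)--(eo3) and (eo5) transparent: permutations and dummy variables preserve $\mu$, while identifications and universal quantifications can only shrink it.

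The principal obstacle is the analysis of (eo4). By the composition formula of Remark~\ref{rem:applyingeo}, combining relations of shape $(z=b)\lor\phi_1$ and $(z=b')\lor\phi_2$ along their shared first variable produces $\phi_1\lor\phi_2\lor(b=b')$. The key structural observation, which I would carry as a parallel invariant in the induction, is that every disjunction of linear equations derivable from $\rho'$ uses on each sort~$s$ only the unique value $b_s$ prescribed by~$\rho'$ (or omits sort~$s$ altogether); consequently any legal application of (eo4) forces $b=b'$, the linking clause $(b=b')$ collapses to $\top$, and the entire composition trivialises to $\top$. Compositions involving an equality relation $\sigma_{=}^s$ merely rename the quantified variable and preserve~$\mu$. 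Hence (eo4) produces no new canonical (c7) relation, the induction closes, and $\mu(\rho)\preceq\mu(\rho')$ follows.
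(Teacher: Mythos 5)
Your argument is correct, and it is necessarily more explicit than what the paper records: the paper states this as an observation and gives no proof (Section~\ref{sec:lattice} declares all observations to follow directly from the definitions), the nontrivial direction implicitly resting on the classification of $\qpp{\rho'}\cap\KR^k$ for type (c7) relations deferred to Table~\ref{tab:qppcanon} and Theorem~\ref{thm:qppcanon}, where $S_{\rho'}$ consists exactly of the (c7)-shaped relations with the same constants and componentwise smaller counts. Your forward direction (universal quantification deletes a lone disjunct $x=b$, so a componentwise smaller tuple gives qpp-definability, the sign constraint forcing the constants to agree) is precisely the mechanism the paper uses in the proofs of Lemmas~\ref{thm:canon2} and~\ref{thm:disjeq01}. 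Your converse -- invoking Theorem~\ref{thm:expressedbyconjunctions} to place $\rho$ in $\eo{\{\rho'\}}{5}$ and then inducting over derivations using the explicit composition formula of Remark~\ref{rem:applyingeo} -- is a legitimate self-contained substitute for citing the Table~\ref{tab:qppcanon} classification, and its key computation (composing two clauses $z=b$ and $z=b'$ with $b=b'$ trivialises the relation, while composition with $\sigma_{=}^s$ only renames) is right.

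One step should be tightened: the invariant as you state it (``every derivable disjunction uses on each sort $s$ only the value $b_s$'') is not by itself inductive. It does not exclude disjunctions mixing an equality clause with constant clauses, such as $(x^s=y^s)\lor(z^t=b_t)$; if such a relation were derivable, composing it over $x^s$ with a sub-(c7) relation $(x^s=b_s)\lor\phi$ would yield $(y^s=b_s)\lor(z^t=b_t)\lor\phi$, which can exceed the bound $m_t(\rho')$ on sort $t$ while still satisfying your stated invariant. The remedy is to carry the stronger structural invariant that every relation in $\eo{\{\rho'\}}{5}$ is, up to dummy variables and permutation of variables, either trivial (full or empty), a plain equality $x^s=y^s$, or a (c7)-shaped relation with constants $b_s$ and counts componentwise at most $\mu(\rho')$; the case analysis of (eo1)--(eo5) then closes exactly along the lines you describe, since mixed relations of the above kind (which are of type (c2)) never arise.
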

\begin{theorem}\label{thm:embedding}
Let $k\in\N$. The poset
\begin{equation*}
    (\mathfrak{L}_{\qpp[k]{-}}(R_{0,\,1}^k),\,\subseteq)
\end{equation*}
embeds into the poset
\begin{equation*}
    (2^{\CR_{1-6}^k},\,\subseteq)\times (\mathfrak{L}_{\downset_\preceq}(\N_0^{2k}),\,\subseteq)\,,
\end{equation*}
where $\preceq$ is the product ordering on $\N_0^{2k}$.
\end{theorem}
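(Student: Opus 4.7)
The plan is to define the embedding
\[
\nu(S) \coloneqq \bigl(S \cap \CR_{1-6}^k,\; \mu(S \cap \CR_7^k) \cup \{(0,\dots,0)\}\bigr)
\]
for each quantified relational clone $S$, where $\mu$ is the tuple invariant from Observation~\ref{thm:predtotuple}. The first coordinate lies in the power set of the finite set $\CR_{1-6}^k$, and the second coordinate must be shown to be a $\preceq$-downset of $\N_0^{2k}$. Order-preservation will then be immediate, and injectivity will follow from Theorem~\ref{thm:CR}.

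To verify the downset property, I would fix $\alpha = \mu(\rho)$ for some $\rho \in S \cap \CR_7^k$ and $\beta \preceq \alpha$ in $\N_0^{2k}$. If $\beta = (0,\dots,0)$, it lies in the second coordinate by construction. Otherwise, the structural constraint satisfied by $\alpha$ (at most one non-zero entry per sort-pair) is inherited by $\beta$, so $\beta = \mu(\rho')$ for some $\rho' \in \CR_7^k$; Observation~\ref{thm:predtotuple} then yields $\qpp{\rho'} \subseteq \qpp{\rho} \subseteq S$, so $\rho' \in S \cap \CR_7^k$ and $\beta \in \mu(S \cap \CR_7^k)$. Order-preservation is then immediate: if $S \subseteq S'$, then both coordinates of $\nu(S)$ are contained in the respective coordinates of $\nu(S')$.

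For injectivity, suppose $\nu(S) = \nu(S')$. Equality of the first coordinates gives $S \cap \CR_{1-6}^k = S' \cap \CR_{1-6}^k$. Since the zero tuple lies outside $\mu(\CR_7^k)$, the equality of second coordinates reduces to $\mu(S \cap \CR_7^k) = \mu(S' \cap \CR_7^k)$. The map $\mu$ distinguishes canonical relations of type (c7) up to similarity---the tuple $\mu(\rho)$ reconstructs $\rho$ uniquely up to a reordering of its variables---and every qpp-closed set is closed under the variable-permutation operation (eo2), so $S \cap \CR_7^k$ is a union of similarity classes, and likewise for $S'$. Hence $\mu(S \cap \CR_7^k) = \mu(S' \cap \CR_7^k)$ forces $S \cap \CR_7^k = S' \cap \CR_7^k$. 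Combining both, $S \cap \CR^k = S' \cap \CR^k$, and Theorem~\ref{thm:CR} gives $S = \qpp{S \cap \CR^k} = \qpp{S' \cap \CR^k} = S'$.

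The main technical obstacle lies in confirming the two properties of $\mu$ restricted to $\CR_7^k$: namely, that its image is closed under non-zero $\preceq$-descent (needed for the downset property) and that it is injective up to similarity (needed for injectivity of $\nu$). Both follow from the explicit description of canonical relations of type (c7): once the tuple $(m_1, n_1, \dots, m_k, n_k)$ with at most one non-zero entry per sort-pair is fixed, the canonical relation is determined up to reordering of its variables. Everything else reduces to a direct invocation of Theorem~\ref{thm:CR} and Observation~\ref{thm:predtotuple}.
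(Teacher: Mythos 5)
Your proof is correct and takes essentially the same route as the paper: reduce to canonical relations via Theorem~\ref{thm:CR}, record the finitely many relations of types (c1)--(c6) as a subset of $\CR_{1-6}^k$, and encode the type-(c7) part as a $\preceq$-downset of $\N_0^{2k}$ through the map $\mu$ of Observation~\ref{thm:predtotuple}. The only differences are cosmetic: the paper factors the embedding through the induced closure on $\CR_7^k$ and leaves the downset and injectivity checks implicit, whereas you write the composite map directly (adjoining the zero tuple) and spell those checks out.
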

\begin{proof}
Since a composition of embeddings is an embedding itself, we will prove the desired claim in steps.

By Theorem \ref{thm:CR}, the poset $(\mathfrak{L}_{\qpp[k]{-}}(R_{\{0,\,1\}}^k),\,\subseteq)$ is isomorphic to the poset $(\mathfrak{L}_{\qpp[k]{-}\,\cap\,\CR^k}(\CR^k),\,\subseteq)$.

Since for every closed subset $S$ of $\CR^k$ we may consider the partition
\begin{equation*}
    S = (S\cap \CR_{1-6}^k)\cup (S\cap \CR_{7}^k)\,,
\end{equation*}
we have the embedding of $(\mathfrak{L}_{\qpp[k]{-}\,\cap\,\CR^k}(\CR^k),\,\subseteq)$ into the poset
\begin{equation*}
    (2^{\CR_{1-6}^k},\,\subseteq)\times (\mathfrak{L}_{\qpp[k]{-}\,\cap\,\CR^k_7}(\CR^k_7),\,\subseteq)\,.
\end{equation*}

Thus, it remains to be shown that the poset $(\mathfrak{L}_{\qpp[k]{-}\,\cap\,\CR^k_7}(\CR^k_7),\,\subseteq)$ embeds into the poset $(\mathfrak{L}_{\downset_\preceq}(\N_0^{2k}),\,\subseteq)$.

Considering the mapping $\mu: \CR_7^k\to \N_0^{2k}$ from Observation~\ref{thm:predtotuple}, we define a~mapping $\nu:\mathfrak{L}_{\qpp[k]{-}\,\cap\,\CR^k_7}(\CR^k_7)\to\mathfrak{L}_{\downset_\preceq}(\N_0^{2k})$ by the following rule:
\begin{equation*}
    \nu(S) = \{\mu(\rho)\mid \rho\in S\}\,.
\end{equation*}
It follows from Observation~\ref{thm:predtotuple} that $\nu$ is well-defined and, in fact, the desired embedding.
\end{proof}
\subsection{Inductive structure of the lattice}\label{sec:inductivestructure}
Let us recall that by the Galois connection (Theorem \ref{thm:Galois}), the posets
\begin{equation*}
    (\mathfrak{L}_{\qpp[k]{-}}(R_{\{0,\,1\}}^k),\,\subseteq)\quad\text{and}\quad(\mathfrak{L}_{\sClo^k}(\OO_{\{0,\,1\}}^k),\,\subseteq)
\end{equation*}
are antiisomorphic, that is, there is a bijective, order-reversing mapping between them.

We will now demonstrate that the embedding described in Theorem \ref{thm:embedding} has implications not only for the lattice of quantified relational clones but also for the bigger lattice of (relational) clones. Moreover, as the following result suggests, this lattice has quite a simple inductive structure.
\begin{theorem}\label{thm:indstr}
Let $k\in\N$, $k\geq 2$. The poset 
\begin{equation*}
    (\mathfrak{L}_{\Clo^k}(\OO_{\{0,\,1\}}^k),\,\subseteq)
\end{equation*} embeds into the product poset
\begin{equation*}
    (\mathfrak{L}_{\sClo^k}(\OO_{\{0,\,1\}}^k),\,\subseteq)\times\prod_{i\,=\,1}^{2k}\left((\mathfrak{L}_{\Clo^{k-1}}(\OO_{\{0,\,1\}}^{k-1})\cup\{\emptyset\},\,\subseteq)\times (\{0,\,1\},\,\leq)\right)\,,
\end{equation*}
where $\leq$ denotes the standard linear ordering on $\{0,\,1\}$.
\end{theorem}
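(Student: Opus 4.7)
The plan is to construct an explicit order-preserving injection sending a $k$-clone $\mathcal{C}$ to the tuple
\[
\Phi(\mathcal{C}) \coloneqq \bigl(\sClo^k\mathcal{C},\, (\mathcal{C}_{i,b},\, \epsilon_{i,b})_{(i,b)\,\in\, E_k\,\times\,\{0,1\}}\bigr).
\]
The key structural observation is that on the two-element domain every non-surjective operation is one of the unary constants $\mathfrak{0},\,\mathfrak{1}$, so each non-surjective $\boldsymbol{f} \in \mathcal{C}$ has at least one coordinate $\boldsymbol{f}^{(i)}$ equal to some constant $b$. For each pair $(i,b) \in E_k \times \{0,1\}$, let $\pi_i : \OO_{\{0,1\}}^k \to \OO_{\{0,1\}}^{k-1}$ be the forgetful map deleting the $i$-th coordinate, and set
\[
\mathcal{D}_{i,b} \coloneqq \{\boldsymbol{f} \in \mathcal{C} : \boldsymbol{f}^{(i)} = \text{const } b\}.
\]
When $\mathcal{D}_{i,b}$ is non-empty, define $\mathcal{C}_{i,b} \coloneqq \pi_i(\mathcal{D}_{i,b}) \cup \bigl\{\boldsymbol{p}_{n,j}^{k-1}\bigr\}_{n\in\N,\,j\in E_n}$; otherwise put $\mathcal{C}_{i,b} \coloneqq \emptyset$. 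Finally, let $\epsilon_{i,b} \in \{0,1\}$ equal $1$ precisely when the witness $k$-operation $\boldsymbol{c}_{i,b}$, whose $i$-th coordinate is the unary constant $b$ and whose remaining coordinates are the unary identity, belongs to $\mathcal{C}$.

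First I will verify that each $\mathcal{C}_{i,b}$ is a genuine $(k-1)$-clone whenever $\mathcal{D}_{i,b} \neq \emptyset$. All $(k-1)$-projections are included by construction, so it suffices to check closure under composition. Given $\boldsymbol{g}_0 = \pi_i(\boldsymbol{f}_0)$ with $\boldsymbol{f}_0 \in \mathcal{D}_{i,b}$ and $\boldsymbol{h}_1,\dots,\boldsymbol{h}_m \in \mathcal{C}_{i,b}$, I lift each $\boldsymbol{h}_j$ to some $\hat{\boldsymbol{h}}_j \in \mathcal{C}$ -- either a preimage $\boldsymbol{F}_j \in \mathcal{D}_{i,b}$ under $\pi_i$, or the corresponding $k$-projection $\boldsymbol{p}_{n,l}^k$. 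Then $\boldsymbol{f}_0(\hat{\boldsymbol{h}}_1,\dots,\hat{\boldsymbol{h}}_m) \in \mathcal{C}$ has $i$-th coordinate $\boldsymbol{f}_0^{(i)}(\ldots) = b$, hence lies in $\mathcal{D}_{i,b}$, and its $\pi_i$-image is exactly the desired composition $\boldsymbol{g}_0(\boldsymbol{h}_1,\dots,\boldsymbol{h}_m)$; the case where $\boldsymbol{g}_0$ is a projection is trivial. Order-preservation of $\Phi$ is then immediate, since $\sClo^k\mathcal{C}$, $\mathcal{D}_{i,b}$, and the condition ``$\boldsymbol{c}_{i,b} \in \mathcal{C}$'' are all monotone in $\mathcal{C}$, with $\emptyset$ treated as the minimum of $\mathfrak{L}_{\Clo^{k-1}}(\OO_{\{0,1\}}^{k-1}) \cup \{\emptyset\}$.

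The main content of the proof is injectivity. Since every non-surjective $k$-operation has at least one constant coordinate, we have $\mathcal{C} = \sClo^k \mathcal{C} \cup \bigcup_{(i,b)} \mathcal{D}_{i,b}$, so recovering $\mathcal{C}$ from $\Phi(\mathcal{C})$ reduces to recovering each $\mathcal{D}_{i,b}$. Because $\pi_i$ is injective on $\mathcal{D}_{i,b}$ -- the deleted coordinate is forced to be the constant $b$ -- it is enough to recover $\pi_i(\mathcal{D}_{i,b})$ from the pair $(\mathcal{C}_{i,b}, \epsilon_{i,b})$. This is precisely where the extra bit is essential and is, in my view, the main obstacle: a $(k-1)$-projection appearing in $\mathcal{C}_{i,b}$ may or may not actually come from an element of $\mathcal{D}_{i,b}$. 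An elementary substitution shows that $\boldsymbol{p}_{n,j}^{k-1} \in \pi_i(\mathcal{D}_{i,b})$ iff $\boldsymbol{c}_{i,b}(\boldsymbol{p}_{n,j}^k) \in \mathcal{C}$, which by identification of variables is equivalent to $\boldsymbol{c}_{i,b} \in \mathcal{C}$; hence either \emph{all} $(k-1)$-projections lie in $\pi_i(\mathcal{D}_{i,b})$ (when $\epsilon_{i,b} = 1$) or \emph{none} do (when $\epsilon_{i,b} = 0$). Consequently $\pi_i(\mathcal{D}_{i,b}) = \mathcal{C}_{i,b}$ if $\epsilon_{i,b} = 1$, and $\pi_i(\mathcal{D}_{i,b}) = \mathcal{C}_{i,b} \setminus \{\boldsymbol{p}_{n,j}^{k-1}\}_{n,j}$ if $\epsilon_{i,b} = 0$, completing the recovery of $\mathcal{C}$ from $\Phi(\mathcal{C})$ and establishing that $\Phi$ is the claimed embedding.
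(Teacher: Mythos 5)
Your proposal is correct and follows essentially the same route as the paper: decompose $\mathcal{C}$ into its surjective part and the $2k$ sets of operations whose $i$-th coordinate is constant $b$, project these to $(k-1)$-operations, complete with the $(k-1)$-projections to get a $(k-1)$-clone, and record one extra bit to disambiguate whether the projections genuinely arise from $\mathcal{C}$ (your all-or-none claim via the witness $\boldsymbol{c}_{i,b}$ is exactly the paper's observation that $\mathcal{C}_{i,b}'$ contains either no or all $(k-1)$-projections). Your explicit recovery argument for injectivity is a slightly more detailed version of what the paper sketches, but the underlying construction is the same.
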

\begin{proof}
Let $\mathcal{C}$ be a $k$-clone on $\{0,\,1\}$. For all $b\in\{0,\,1\}$ and $i\in E_k$ we denote
\begin{equation*}
    \mathcal{C}_{i,b} \coloneqq \{\boldsymbol{f}\in\mathcal{C}\mid \boldsymbol{f}^{(i)}\text{ is constant }b\}\,.
\end{equation*}
Notice that we may express $\mathcal{C}$ as the following union of $2k+1$ sets:
\begin{equation}
    \mathcal{C} = \sClo^k\mathcal{C} \cup \bigcup_{i\,\in\, E_k} (\mathcal{C}_{i,0} \cup \mathcal{C}_{i,1})\,. \tag{$\ast$}
\end{equation}
This decomposition highlights the main idea behind the embedding: since the $k$-operations in $\mathcal{C}_{i,b}$ effectively reduce to $(k-1)$-operations, it is natural to expect the sets $\mathcal{C}_{i,b}$ to correspond to $(k-1)$-clones. To formalize this intuition, we introduce the following notation and prove Claim~\ref{thm:claiminduction}, which characterizes the structure of the sets $\mathcal{C}_{i,b}$.

Let $\mathcal{P}^{k-1}$ be the $(k-1)$-clone generated by the empty set (i.e., the clone of $(k-1)$-projections), and let
\begin{equation*}
    \mathcal{C}_{i,b}' \coloneqq \{(\boldsymbol{f}^{(1)},\dots,\,\boldsymbol{f}^{(i-1)},\,\boldsymbol{f}^{(i+1)},\dots,\,\boldsymbol{f}^{(k)})\mid \boldsymbol{f}\in \mathcal{C}_{i,b}\}\,.
\end{equation*}
\begin{claim}\label{thm:claiminduction}
For each $b\in\{0,\,1\}$ and $i\in E_k$, the set $\mathcal{C}_{i,b}'$ is either empty, a $(k-1)$-clone, or a $(k-1)$-clone without all $(k-1)$-projections.
\end{claim}
\begin{proof}[Proof of Claim \ref{thm:claiminduction}.]
Assume that $\mathcal{C}_{i,b}'$ is nonempty. The following observations are sufficient to prove the claim:
\begin{enumerate}
    \item The set $\mathcal{C}_{i,b}'$ is closed under compositions.
    \item The set $\mathcal{C}_{i,b}'\cup\mathcal{P}^{k-1}$ is a clone.
    \item If $\mathcal{C}_{i,b}'$ contains any $(k-1)$-projection, then it contains all $(k-1)$-projections.
\end{enumerate}
As all the points are straightforward to verify, we provide just the following brief comments to clarify the key reasoning:
\begin{itemize}
    \item (1) follows from the fact that $\mathcal{C}_{i,b}$ is itself closed under compositions.
    \item (2) and (3) are direct consequences of (1) combined with the observation that the composition of $k$-projections with the $k$-operations in $\mathcal{C}_{i,b}$ results in either a~$k$-projection or a~$k$-operation within $\mathcal{C}_{i,b}$.
    
    Specifically, if $\boldsymbol{f}'\in \mathcal{C}_{i,b}'$ is a $(k-1)$-projection, then every $(k-1)$-projection can be obtained by composing the corresponding $k$-operation $\boldsymbol{f} \in \mathcal{C}_{i,b}$ with suitable $k$-projections. \qedhere
\end{itemize}
\end{proof}
Based on this claim, let us define the following mapping for each $b\in\{0,\,1\}$ and $i\in E_k$:
\begin{equation*}
    \nu_{i,b}: 
    (\mathfrak{L}_{\Clo^k}(\OO_{\{0,\,1\}}^k),\,\subseteq)\to
    \left((\mathfrak{L}_{\Clo^{k-1}}(\OO_{\{0,\,1\}}^{k-1})\cup\{\emptyset\},\,\subseteq)\times (\{0,\,1\},\,\leq)\right)
\end{equation*}
given by
\begin{equation*}
    \nu_{i,b}(\mathcal{C}) = \left\{\begin{array}{ll}
        (\emptyset, 0)\,, & \text{ if }\mathcal{C}_{i,b}'\text{ is empty},\\
        (\mathcal{C}_{i,b}'\cup\mathcal{P}^{k-1},\, 0)\,, & \text{ if }\mathcal{C}_{i,b}' \text{ does not contain $(k-1)$-projections},\\
        (\mathcal{C}_{i,b}',\, 1)\,, & \text{ if }\mathcal{C}_{i,b}'\text{ contains $(k-1)$-projections}.
    \end{array}\right.
\end{equation*}
Notice that this mapping is well-defined, as the cases in the definition of $\nu_{i,b}$ account for all possibilities described in Claim \ref{thm:claiminduction}. Furthermore, it is order-preserving because the definition of $\nu_{i,b}$ ensures that $\mathcal{C} \subseteq \mathcal{D}$ implies $\mathcal{C}_{i,b}' \subseteq \mathcal{D}_{i,b}'$, which in turn guarantees $\nu_{i,b}(\mathcal{C}) \preceq \nu_{i,b}(\mathcal{D})$ for all $k$-clones $\mathcal{C}$ and~$\mathcal{D}$.

We now define the desired embedding:
\begin{equation*}
    \nu: \mathcal{C} \mapsto \left(\sClo^k\mathcal{C},\,\nu_{1,0}(\mathcal{C}),\,\nu_{1,1}(\mathcal{C}),\dots,\,\nu_{k,0}(\mathcal{C}),\,\nu_{k,1}(\mathcal{C})\right)\,.
\end{equation*}
It is clear that $\nu$ is order-preserving. Finally, we observe that $\nu$ is injective. Indeed, by the decomposition $(\ast)$ and the definition of $\nu_{i,b}$, every $k$-clone $\mathcal{C}$ is uniquely determined by its surjective part $\sClo^k\mathcal{C}$ and the sets $\mathcal{C}_{i,b}$. Since $\nu_{i,b}$ encodes the sets $\mathcal{C}_{i,b}$ uniquely, the image $\nu(\mathcal{C})$ completely determines $\mathcal{C}$, completing the proof.
\end{proof}
As a direct consequence of the established results, we obtain the following fundamental corollary.
\begin{corollary}\label{thm:indstrcor}
Let $k\in\N$, $k\geq 2$. The poset 
\begin{equation*}
    (\mathfrak{L}_{\Clo^k}(\OO_{\{0,\,1\}}^k),\,\subseteq)
\end{equation*} embeds into the product poset
\begin{multline*}
    (2^{\CR_{1-6}^k},\,\supseteq)\times (\mathfrak{L}_{\downset_\preceq}(\N_0^{2k}),\,\supseteq)\,\times\\ \times \prod_{i\,=\,1}^{2k}\left((\mathfrak{L}_{\Clo^{k-1}}(\OO_{\{0,\,1\}}^{k-1})\cup\{\emptyset\},\,\subseteq)\times (\{0,\,1\},\,\leq)\right)\,,
\end{multline*}
where $\preceq$ denotes the product ordering on $\N_0^{2k}$, and $\leq$ is the standard linear ordering on $\{0,\,1\}$.
\end{corollary}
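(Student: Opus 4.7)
The plan is to combine Theorem~\ref{thm:indstr} with Theorem~\ref{thm:embedding} via the Galois correspondence from Theorem~\ref{thm:Galois}. Theorem~\ref{thm:indstr} already realizes most of the work: it embeds $(\mathfrak{L}_{\Clo^k}(\OO_{\{0,\,1\}}^k),\,\subseteq)$ into the product of $(\mathfrak{L}_{\sClo^k}(\OO_{\{0,\,1\}}^k),\,\subseteq)$ with the iterated factor $\prod_{i=1}^{2k}\left(\mathfrak{L}_{\Clo^{k-1}}\cup\{\emptyset\},\,\subseteq\right)\times(\{0,\,1\},\,\leq)$, which is exactly the last factor in the target poset of the corollary. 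So the entire remaining task is to replace the surjective-clone factor with the canonical-relations factor $(2^{\CR_{1-6}^k},\,\supseteq)\times(\mathfrak{L}_{\downset_\preceq}(\N_0^{2k}),\,\supseteq)$.

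First I would invoke Theorem~\ref{thm:Galois}: the maps $\Inv$ and $\sPol$ are mutually inverse and order-reversing between $\mathfrak{L}_{\sClo^k}(\OO_{\{0,\,1\}}^k)$ and $\mathfrak{L}_{\qpp[k]{-}}(R_{\{0,\,1\}}^k)$. Hence $(\mathfrak{L}_{\sClo^k}(\OO_{\{0,\,1\}}^k),\,\subseteq)$ is order-isomorphic to $(\mathfrak{L}_{\qpp[k]{-}}(R_{\{0,\,1\}}^k),\,\supseteq)$. Then by Theorem~\ref{thm:embedding}, $(\mathfrak{L}_{\qpp[k]{-}}(R_{\{0,\,1\}}^k),\,\subseteq)$ embeds into $(2^{\CR_{1-6}^k},\,\subseteq)\times(\mathfrak{L}_{\downset_\preceq}(\N_0^{2k}),\,\subseteq)$; taking opposite orderings on both sides yields an embedding of $(\mathfrak{L}_{\qpp[k]{-}}(R_{\{0,\,1\}}^k),\,\supseteq)$ into $(2^{\CR_{1-6}^k},\,\supseteq)\times(\mathfrak{L}_{\downset_\preceq}(\N_0^{2k}),\,\supseteq)$. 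Composing these two steps produces an order-embedding of $(\mathfrak{L}_{\sClo^k}(\OO_{\{0,\,1\}}^k),\,\subseteq)$ into $(2^{\CR_{1-6}^k},\,\supseteq)\times(\mathfrak{L}_{\downset_\preceq}(\N_0^{2k}),\,\supseteq)$.

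The final step is to glue this embedding onto the first coordinate of the embedding given by Theorem~\ref{thm:indstr} (while leaving the $2k$ factors corresponding to the $\nu_{i,b}$ untouched). Since the product of order-embeddings is an order-embedding and the composition of order-embeddings is an order-embedding, this yields the required embedding of $(\mathfrak{L}_{\Clo^k}(\OO_{\{0,\,1\}}^k),\,\subseteq)$ into the target product poset in the statement of the corollary.

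There is essentially no obstacle: every ingredient has already been established. The only point worth stating explicitly is the careful bookkeeping of order directions, making sure that the order-reversing character of the Galois correspondence accounts precisely for the appearance of $\supseteq$ (rather than $\subseteq$) in the first two coordinates of the target poset.
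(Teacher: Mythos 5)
Your proposal is correct and follows exactly the paper's route: the paper proves this corollary by combining Theorem~\ref{thm:indstr}, the Galois connection of Theorem~\ref{thm:Galois}, and Theorem~\ref{thm:embedding}, precisely as you do, with your explicit bookkeeping of the reversed orderings being the only detail the paper leaves implicit.
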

\begin{proof}
We combine the results of Theorem \ref{thm:indstr}, Theorem \ref{thm:Galois}, and Theorem~\ref{thm:embedding}.
\end{proof}
Although Theorem~\ref{thm:indstr} and Corollary~\ref{thm:indstrcor} may initially appear symbolically dense, they provide valuable insight into the structure of the lattice of (surjective) clones. For instance, its countability follows as a straightforward consequence.

Another key implication of this embedding is the fact that every $k$-clone is finitely generated. To observe this, we define a few more order theory concepts that allow us to relate a particular property of the lattice of clones to a~structural property of the clones themselves.
\begin{definition}[Order theory concepts II]
Let $(P,\,\leq)$ be a~poset. We say $P$ satisfies the~\textit{ascending chain condition} (ACC) if for every ascending chain, that is, an infinite sequence $p_1,\,p_2,\,p_3,\dots\in P$ satisfying
\begin{equation*}
    p_1\leq p_2\leq p_3\leq\dots\,,
\end{equation*}
there is $m\in\N$ such that $p_{m} = p_{n}$ for all $n\geq m$.
\end{definition}
\begin{observation}\label{thm:posetemb}
Let $(Q,\,\preceq)$ and $(P,\,\leq)$ be posets.
\begin{enumerate}
    \item If $(Q,\,\preceq)$ embeds into $(P,\,\leq)$, and $(P,\,\leq)$ satisfies ACC, then $(Q,\,\preceq)$ satisfies ACC.
    \item If $(Q,\,\preceq)$ and $(P,\,\leq)$ satisfy ACC, then $(Q,\,\preceq)\times(P,\,\leq)$ satisfies ACC.
    \item If the set $P$ is finite, then $(P,\,\leq)$ satisfies ACC.
\end{enumerate}
\end{observation}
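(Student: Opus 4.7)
The plan is to prove the three statements separately in the obvious order, since each is standard and independent of the others. All three follow by a direct unfolding of the definition of ACC, so there is no real obstacle; the only thing one has to be careful about is exploiting injectivity of the embedding in (1) and treating coordinate-wise stabilization correctly in (2).

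For part (1), I would start with an arbitrary ascending chain $q_1 \preceq q_2 \preceq q_3 \preceq \dots$ in $Q$ and let $\nu\colon Q\to P$ be the embedding. Since $\nu$ is order-preserving, the sequence $\nu(q_1)\leq\nu(q_2)\leq\nu(q_3)\leq\dots$ is an ascending chain in $P$; by ACC on $P$ there exists $m\in\N$ with $\nu(q_m)=\nu(q_n)$ for all $n\geq m$. Injectivity of $\nu$ then gives $q_m=q_n$ for all $n\geq m$, which is exactly ACC for $Q$.

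For part (2), I would take an ascending chain $(q_1,p_1)\preceq(q_2,p_2)\preceq\dots$ in the product. By the definition of the product order this means $q_1\preceq q_2\preceq\dots$ in $Q$ and $p_1\leq p_2\leq\dots$ in $P$, so by ACC there are $m_Q,m_P\in\N$ such that the two chains stabilize past those indices. Taking $m=\max(m_Q,m_P)$ yields $(q_m,p_m)=(q_n,p_n)$ for all $n\geq m$.

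For part (3), I would argue that in a finite poset there can be no infinite strictly ascending chain, since every strictly ascending chain $p_{i_1}<p_{i_2}<\dots$ consists of pairwise distinct elements and is therefore bounded in length by $|P|$. Hence any weakly ascending chain in $P$ must eventually become constant, which is the ACC.

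Since all three statements are immediate from the definitions, the ``main obstacle'' is merely to write them cleanly; no combinatorial or algebraic ideas are required beyond the observation that order-preserving injections transport ACC back along themselves.
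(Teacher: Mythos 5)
Your proof is correct, and it is exactly the routine unfolding of the definitions that the paper has in mind: the statement is presented as an observation without proof precisely because the intended argument is the one you give (transport a chain through the order-preserving injection and use injectivity; stabilize coordinatewise and take the maximum index; extract a strictly increasing subsequence to contradict finiteness). No gaps, and no divergence from the paper's approach to report.
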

The following lemma is a canonical result in algebra and a~key component of the presented proof of Taimanov's theorem. It establishes a connection between ACC -- a~structural property of the poset of closed sets -- with the fact that every set is finitely generated. For further details, we refer the reader to~\cite[Chapter 6]{Atiyah1969}, \cite{Hodges1974}, or any standard text on Noetherian modules, rings, or algebras.

While this result is often stated under the Axiom of Choice (or some of its weaker variants), the following formulation avoids the need for AC by assuming countability. If countability is not assumed, AC becomes necessary; see~\cite{Hodges1974}.
\begin{lemma}\label{thm:ACCFG}
Let $A$ be a~nonempty countable set and let $\cls$ be an algebraic closure operator on $A$. The poset $(\mathfrak{L}_{\cls}(A),\,\subseteq)$ satisfies ACC if and only if every element of $\mathfrak{L}_{\cls}(A)$ is finitely generated with respect to $\cls$.
\end{lemma}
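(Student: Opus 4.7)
The plan is to prove the two directions of the biconditional separately, using the algebraic closure axioms (cl1)--(cl4) in both directions and the countability of $A$ only in the implication from ACC to finite generation.

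For the direction that finite generation implies ACC, I would take an ascending chain $B_1\subseteq B_2\subseteq\cdots$ in $\mathfrak{L}_{\cls}(A)$ and consider its union $B=\bigcup_i B_i$. The first key step is to observe that $B$ is itself closed: by (cl4), every element of $\cls B$ lies in $\cls C$ for some finite $C\subseteq B$; since such a $C$ is covered by finitely many $B_i$, it is contained in a single $B_m$, and then $\cls C\subseteq\cls B_m=B_m\subseteq B$. Once $B$ is known to be closed, finite generation gives $B=\cls\{c_1,\dots,c_n\}$ for some generators, each $c_j\in B_{i_j}$, and taking $m=\max_j i_j$ yields $B\subseteq B_m$, forcing the chain to stabilize from $B_m$ onward. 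This direction does not use countability.

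For the converse, assume ACC holds and suppose for contradiction that some $B\in\mathfrak{L}_{\cls}(A)$ is not finitely generated. Here I would use countability of $A$ (hence of $B$) to fix an enumeration $B=\{b_1,b_2,\dots\}$, and then construct a strictly ascending chain by recursion: set $C_0=\emptyset$, and given $C_n$ finite with $\cls C_n\subsetneq B$ (which is guaranteed because $B$ is not finitely generated), let $j_{n+1}$ be the least index with $b_{j_{n+1}}\notin\cls C_n$, and put $C_{n+1}=C_n\cup\{b_{j_{n+1}}\}$. Applying $\cls$ yields $\cls C_0\subsetneq\cls C_1\subsetneq\cls C_2\subsetneq\cdots$, where strictness holds because $b_{j_{n+1}}\in\cls C_{n+1}\setminus\cls C_n$ by (cl1). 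This infinite strictly ascending chain contradicts ACC.

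The main obstacle, and the reason the hypothesis ``$A$ countable'' appears in the statement, is precisely the second direction: without countability one would need some form of the axiom of choice to pick the generators $b_{j_{n+1}}$ at each step; with countability the explicit ``least index'' rule gives a canonical choice and avoids AC entirely. The algebraic axiom (cl4) is also essential — without it the union of a chain of closed sets need not be closed, so even the forward direction would fail. Everything else in the argument is routine manipulation of the closure axioms.
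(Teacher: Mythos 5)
Your proof is correct and follows essentially the same route as the paper: the direction from finite generation to ACC is identical (union of the chain, closedness via (cl4), then generators landing in a single $B_m$), and the direction from ACC to finite generation likewise rests on enumerating $B$ and applying ACC to a chain of closures of finite subsets of $B$. The only difference is cosmetic: the paper argues directly with the chain $\cls\{a_1\}\subseteq\cls\{a_1,a_2\}\subseteq\cdots$ of closures of initial segments and shows the stabilized term equals $B$, whereas you argue by contradiction with a greedy least-index chain; both are valid, both use countability exactly where you say (to avoid any appeal to choice), and your implicit use of $\cls C_n\subseteq\cls B=B$ to guarantee a fresh element $b_{j_{n+1}}\in B\setminus\cls C_n$ is sound.
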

Intending to use Observation \ref{thm:posetemb} along with Theorem \ref{thm:embedding}, we present one final lemma before directly proceeding to prove Taimanov's theorem.
\begin{lemma}\label{thm:downsetsofNm}
Let $m\in\N$ and let $\preceq$ be the product ordering on $\N_0^m$. The~poset $(\mathfrak{L}_{\downset_\preceq}(\N_0^{m}),\,\supseteq)$ satisfies ACC.
\end{lemma}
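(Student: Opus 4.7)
The plan is to reformulate the statement in terms of upsets and then apply Dickson's lemma. An ascending chain in $(\mathfrak{L}_{\downset_\preceq}(\N_0^m),\,\supseteq)$ is a sequence $D_1 \supseteq D_2 \supseteq D_3 \supseteq \cdots$ of downsets. Taking complements $U_i \coloneqq \N_0^m \setminus D_i$ turns this into an ascending chain $U_1 \subseteq U_2 \subseteq \cdots$ of upsets in $(\N_0^m, \preceq)$, and the original chain stabilizes if and only if the chain of complements does. Thus the statement reduces to showing that every ascending chain of upsets in $(\N_0^m, \preceq)$ is eventually constant.

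The key observation will be that every antichain in $(\N_0^m, \preceq)$ is finite. I would prove this by induction on $m$: for $m = 1$, $(\N_0, \leq)$ is a linear order so the only antichains are singletons, and for the induction step, given an infinite antichain $A \subseteq \N_0^{m+1}$, one extracts an infinite subsequence $(a^{(1)}, a^{(2)}, \dots)$ whose last coordinates are non-decreasing (which is possible since $\N_0$ is well-ordered); the projections of these tuples onto the first $m$ coordinates must then contain two elements that are $\preceq$-comparable by the induction hypothesis, which combined with the non-decreasing last coordinates contradicts $A$ being an antichain. This is the classical proof of Dickson's lemma, and it is the only real content of the argument.

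Given this, the chain stabilization follows easily. Consider the union $U \coloneqq \bigcup_{i \in \N} U_i$, which is itself an upset of $(\N_0^m, \preceq)$. The set $\min U$ of $\preceq$-minimal elements of $U$ is an antichain, hence finite by the previous paragraph. Moreover, since every element of $U$ lies above some minimal one (as $\N_0^m$ is well-founded under $\preceq$), we have $U = \upset_\preceq(\min U)$. Each of the finitely many elements of $\min U$ belongs to some $U_{i_j}$, so taking $N$ to be the maximum of the finitely many indices $i_j$ (and using that $U_1 \subseteq U_2 \subseteq \cdots$), all minimal elements of $U$ lie in $U_N$. Since $U_N$ is an upset, this forces $U_N \supseteq \upset_\preceq(\min U) = U$, and hence $U_n = U_N = U$ for all $n \geq N$, proving ACC.

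The main obstacle is really just Dickson's lemma itself; once finiteness of antichains is in hand, the reduction to a finite generating set for the union is routine. Everything else is bookkeeping about complementation and the definition of $\downset_\preceq$.
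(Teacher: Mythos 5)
Your overall architecture is sound and close to the paper's: pass to complements to turn the chain of downsets into a chain of upsets, reduce everything to finiteness of antichains in $(\N_0^m,\preceq)$, and then stabilize the chain via the (finitely many) minimal elements of the union $U$. That last part is correct and in fact streamlines the paper's route, which goes through two auxiliary lemmata (upsets are generated by antichains; ACC is equivalent to finite generation for countable algebraic closure operators). The problem lies exactly where you say the only real content is: your induction step for finiteness of antichains has a gap. Your induction hypothesis is ``every antichain in $\N_0^m$ is finite,'' and from it you conclude that among the projections $\pi(a^{(1)}),\pi(a^{(2)}),\dots$ onto the first $m$ coordinates there are two comparable ones. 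But that only yields a contradiction if the comparability runs \emph{with} the index order of your subsequence: you need some $i<j$ with $\pi(a^{(i)})\preceq\pi(a^{(j)})$, so that the non-decreasing last coordinates give $a^{(i)}\preceq a^{(j)}$. The induction hypothesis may only hand you a pair with $\pi(a^{(j)})\preceq\pi(a^{(i)})$ for $i<j$, and then the last coordinates point the other way and nothing follows; e.g.\ $(5,0)$ and $(3,7)$ have comparable first coordinates and non-decreasing second coordinates in this order, yet are incomparable. What you actually need is that the projection sequence cannot be a \emph{bad} sequence (no $i<j$ with $\pi(a^{(i)})\preceq\pi(a^{(j)})$), and ruling that out requires more than ``no infinite antichain'' -- it also uses well-foundedness, via a Ramsey-type argument or a further extraction.

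The standard repair, consistent with the ``classical proof of Dickson's lemma'' you invoke, is to strengthen the induction statement to the well-quasi-order form: every infinite sequence in $\N_0^m$ has an infinite $\preceq$-non-decreasing subsequence. Then, after extracting a subsequence non-decreasing in the last coordinate, you apply this stronger hypothesis to the projections and extract a further subsequence non-decreasing in all coordinates, giving two comparable elements of the antichain outright. Alternatively, the paper avoids the issue entirely with a pigeonhole argument: fix one element $(n_1,\dots,n_m)$ of the supposed infinite antichain, note every other element has some coordinate $i$ with value strictly below $n_i$, conclude that infinitely many antichain elements agree on some fixed coordinate value, and delete that coordinate to obtain an infinite antichain in $\N_0^{m-1}$, contradicting the (unstrengthened) induction hypothesis. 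Either fix is short, but as written your step does not follow from the hypothesis you state.
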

\begin{theorem}[Taimanov] \label{thm:taimanov}
Let $k\in\N$. Every $k$-clone on $\{0,\,1\}$ is finitely generated with respect to $\Clo^k$.
\end{theorem}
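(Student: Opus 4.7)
The plan is to proceed by induction on $k$ and to convert the statement into an assertion about the ascending chain condition, using Lemma~\ref{thm:ACCFG}. Since $\{0,\,1\}$ is finite, the set $\OO_{\{0,1\}}^k$ is countable, so the countability hypothesis of Lemma~\ref{thm:ACCFG} is met and it suffices to show that for every $k\in\N$ the poset $(\mathfrak{L}_{\Clo^k}(\OO_{\{0,1\}}^k),\subseteq)$ satisfies ACC. For the base case $k=1$, this is Post's classical theorem, which has been rederived in the paper (see Theorem~\ref{thm:qpp_post} and Remark~\ref{rem:post}): every $1$-clone on $\{0,\,1\}$ is finitely generated, and hence by Lemma~\ref{thm:ACCFG} the lattice of $1$-clones satisfies ACC.

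For the inductive step, fix $k\geq 2$ and assume every $(k-1)$-clone is finitely generated. By Lemma~\ref{thm:ACCFG} applied to $\Clo^{k-1}$, the poset $(\mathfrak{L}_{\Clo^{k-1}}(\OO_{\{0,1\}}^{k-1}),\subseteq)$ satisfies ACC, and adjoining $\emptyset$ as a least element preserves this property. The key step is then to invoke Corollary~\ref{thm:indstrcor}, which embeds $(\mathfrak{L}_{\Clo^k}(\OO_{\{0,1\}}^k),\subseteq)$ into a finite product of posets. Each factor of that product satisfies ACC: the factor $(2^{\CR_{1-6}^k},\supseteq)$ is finite, hence ACC by Observation~\ref{thm:posetemb}(3); the factor $(\mathfrak{L}_{\downset_\preceq}(\N_0^{2k}),\supseteq)$ satisfies ACC by Lemma~\ref{thm:downsetsofNm}; and each of the $2k$ remaining factors is a product of a poset satisfying ACC (by the induction hypothesis, with $\emptyset$ adjoined) with the two-element chain $(\{0,\,1\},\leq)$, which trivially satisfies ACC.

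Combining these using Observation~\ref{thm:posetemb}(2) shows the entire product poset satisfies ACC, and then Observation~\ref{thm:posetemb}(1) transfers ACC along the embedding back to $(\mathfrak{L}_{\Clo^k}(\OO_{\{0,1\}}^k),\subseteq)$. A final application of Lemma~\ref{thm:ACCFG} converts this into the desired statement that every $k$-clone on $\{0,\,1\}$ is finitely generated. Since essentially all of the work has been absorbed into Corollary~\ref{thm:indstrcor} and the preparatory order-theoretic lemmata, the proof itself will be short; the only conceptual subtlety worth stressing is the necessity of the countability hypothesis in Lemma~\ref{thm:ACCFG}, which is automatic here because $\{0,\,1\}$ is finite.
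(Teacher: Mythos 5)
Your proposal is correct and follows essentially the same route as the paper: induction on $k$, translating finite generation into ACC via Lemma~\ref{thm:ACCFG}, settling $k=1$ by Post's classification, and handling the inductive step by combining Corollary~\ref{thm:indstrcor} with Lemma~\ref{thm:downsetsofNm} and Observation~\ref{thm:posetemb}. The only cosmetic difference is that the paper also sketches an alternative, self-contained argument for the base case (embedding the lattice of $1$-clones into a product of finite posets with $(\mathfrak{L}_{\downset_\preceq}(\N_0^{2}),\,\supseteq)$), which your version replaces by a direct appeal to Post's theorem.
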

\begin{proof}
By Lemma~\ref{thm:ACCFG}, it suffices to show that the poset $(\mathfrak{L}_{\Clo^k}(\OO_{\{0,\,1\}}^k),\,\subseteq)$ satisfies ACC. We prove this by induction on $k$. 

For $k=1$, we have ACC by Post's lattice theorem. Alternatively, this follows from the fact that $(\mathfrak{L}_{\Clo^1}(\OO_{\{0,\,1\}}^1),\,\subseteq)$ embeds into the poset
\begin{equation*}
    (\mathfrak{L}_{\sClo^1}(\OO_{\{0,\,1\}}^1),\,\subseteq)\times (2^{\{\mathfrak{0},\,\mathfrak{1}\}},\,\subseteq)\,,
\end{equation*}
which, in turn, embeds into
\begin{equation*}
    (2^{\CR_{1-6}^1},\,\supseteq)\times (\mathfrak{L}_{\downset_\preceq}(\N_0^{2}),\,\supseteq)\times (2^{\{\mathfrak{0},\,\mathfrak{1}\}},\,\subseteq)\,.
\end{equation*}
The first embedding is a consequence of the fact that there are only two non-surjective $1$-operations $\mathfrak{0}$ and $\mathfrak{1}$ (up to dummy variables), so any $1$-clone is determined by a surjective $1$-clone and a subset of $\{\mathfrak{0},\,\mathfrak{1}\}$. The second embedding is then given by Theorem~\ref{thm:embedding} and the Galois connection~\ref{thm:Galois}.

By Observation~\ref{thm:posetemb}, the poset $(\mathfrak{L}_{\Clo^1}(\OO_{\{0,\,1\}}^1),\,\subseteq)$ satisfies ACC since it embeds into a product of two finite posets and a poset $(\mathfrak{L}_{\downset_\preceq}(\N_0^2),\,\supseteq)$, which satisfies ACC by Lemma~\ref{thm:downsetsofNm}.

Now assume that $k \geq 2$, and suppose that the poset $(\mathfrak{L}_{\Clo^{k-1}}(\OO_{\{0,\,1\}}^{k-1}),\,\subseteq\nolinebreak)$ satisfies ACC (induction hypothesis).

By Corollary~\ref{thm:indstrcor}, the poset $(\mathfrak{L}_{\Clo^k}(\OO_{\{0,\,1\}}^k),\,\subseteq)$ embeds into
\begin{multline*}
    (2^{\CR_{1-6}^k},\,\supseteq)\times (\mathfrak{L}_{\downset_\preceq}(\N_0^{2k}),\,\supseteq)\,\times\\ \times \prod_{i\,=\,1}^{2k}\left((\mathfrak{L}_{\Clo^{k-1}}(\OO_{\{0,\,1\}}^{k-1})\cup\{\emptyset\},\,\subseteq)\times (\{0,\,1\},\,\leq)\right)\,.
\end{multline*}
To complete the proof, we note the following:
\begin{itemize}
    \item $(2^{\CR_{1-6}^k},\,\supseteq)$ is finite,
    \item $(\mathfrak{L}_{\downset_\preceq}(\N_0^{2k}),\,\supseteq)$ satisfies ACC by Lemma~\ref{thm:downsetsofNm},
    \item $(\mathfrak{L}_{\Clo^{k-1}}(\OO_{\{0,\,1\}}^{k-1})\cup\{\emptyset\},\,\subseteq)$ satisfies ACC by the induction hypothesis (the inclusion of $\emptyset$ does not affect this property),
    \item $(\{0,\,1\},\,\leq)$ is finite.
\end{itemize}
By Observation~\ref{thm:posetemb}, the product of these posets satisfies ACC, and therefore, $(\mathfrak{L}_{\Clo^k}(\OO_{\{0,\,1\}}^k),\,\subseteq)$ satisfies ACC as well.
\end{proof}
\begin{corollary}
Let $k\in\N$. The set $\mathfrak{L}_{\Clo^k}(\OO_{\{0,\,1\}}^k)$ is countable.
\end{corollary}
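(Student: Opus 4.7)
The plan is to deduce this directly from Taimanov's theorem (Theorem~\ref{thm:taimanov}) together with basic cardinality arithmetic. First, I would observe that the set $\OO_{\{0,1\}}^k$ is countable: for each fixed arity $n \in \N$, the set $\OO_{\{0,1\},n}^k$ of $n$-ary $k$-operations is finite (its cardinality is $(2^{2^n})^k$), and $\OO_{\{0,1\}}^k$ is the countable union of these finite sets over $n \in \N$.

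Next, I would consider the map
\begin{equation*}
    \Phi: \{F \subseteq \OO_{\{0,1\}}^k \mid F \text{ finite}\} \to \mathfrak{L}_{\Clo^k}(\OO_{\{0,1\}}^k)\,, \qquad \Phi(F) = \Clo^k F\,.
\end{equation*}
The domain of $\Phi$ is the set of finite subsets of a countable set, which is countable. By Theorem~\ref{thm:taimanov}, every $k$-clone on $\{0,1\}$ is finitely generated with respect to $\Clo^k$, so $\Phi$ is surjective. Therefore the codomain $\mathfrak{L}_{\Clo^k}(\OO_{\{0,1\}}^k)$ is the image of a countable set and is itself (at most) countable.

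There is no real obstacle here, as the statement is an immediate corollary: the content lies entirely in Theorem~\ref{thm:taimanov}, after which countability follows from the observation that a finitely generated object is determined (up to the closure operator) by one of countably many finite generating sets. The only minor point to note is the conventional ambiguity between ``countable'' and ``at most countable''; in either reading, the argument above suffices, since $\mathfrak{L}_{\Clo^k}(\OO_{\{0,1\}}^k)$ is nonempty and contains, for instance, infinitely many distinct clones (this is already visible from the $1$-sorted case via Post's lattice, and carries through to larger $k$).
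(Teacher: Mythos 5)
Your argument is correct and is exactly the intended one: the paper states this as an immediate, unproved corollary of Theorem~\ref{thm:taimanov}, and the standard deduction is precisely yours — $\OO_{\{0,\,1\}}^k$ is countable, so there are only countably many finite generating sets, and finite generation makes $F \mapsto \Clo^k F$ surjective onto $\mathfrak{L}_{\Clo^k}(\OO_{\{0,\,1\}}^k)$. Your closing remark on ``countable'' versus ``at most countable'' is a reasonable aside and does not affect the argument.
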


\section{Galois connection}\label{sec:Galois}
This section presents the proof of Theorem \ref{thm:Galois}. Although the main structure is pretty canonical and based on the known results \cite{Geiger1968,Bodnarchuk1969I,Bodnarchuk1969II,Romov1973,Borner2009}, our aim is to provide a comprehensive proof with a focus on extending the $\sPol$-$\Inv$ result from \cite{Borner2009} to the multi-sorted case. We begin by establishing some conventions that will be used throughout. 
\begin{convention}
In this section, we fix $k\in\N$, $l\in\N$, and a~finite nonempty set $A = \{1,\dots,\,l\}$.
\end{convention}
The first part of the theorem, that is, the fact that the mappings $\Inv$ and $\sPol$ form a Galois connection follows directly from the definition.
\begin{observation}\label{thm:galoisproperty}
For all $\mathcal{F}'\subseteq\mathcal{F}\subseteq \OO_A^k$ and all $S'\subseteq S \subseteq R_A^k$, we have
\begin{align*}
    \Inv \mathcal{F} &\subseteq \Inv\mathcal{F}'\,, & \mathcal{F}&\subseteq\sPol\Inv\mathcal{F}\,, & \Inv\sPol\Inv\mathcal{F} &= \Inv\mathcal{F}\,, \\
    \sPol S &\subseteq \sPol S'\,, & S&\subseteq\Inv\sPol S\,, & \sPol\Inv\sPol S &= \sPol S\,,
\end{align*}
\end{observation}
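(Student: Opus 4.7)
The statement collects the six standard properties that make the pair $(\sPol,\Inv)$ into a Galois connection, and each follows by directly unpacking the definitions. My plan is to handle them in three groups: monotonicity, extension, and idempotency, where the third group is purely a formal consequence of the first two (as in any Galois connection arising from a binary relation, in this case ``$\boldsymbol{f}$ preserves $\rho$'').

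First I would verify the two monotonicity inclusions. If $\mathcal{F}'\subseteq\mathcal{F}$ and $\rho\in\Inv\mathcal{F}$, then every $\boldsymbol{f}\in\mathcal{F}$ preserves $\rho$, so in particular every $\boldsymbol{f}\in\mathcal{F}'$ does, giving $\rho\in\Inv\mathcal{F}'$. The dual argument on the operations side gives $\sPol S\subseteq\sPol S'$ whenever $S'\subseteq S$. Next, for the two extension statements, one unpacks the definitions once more: for $\boldsymbol{f}\in\mathcal{F}$, the definition of $\Inv\mathcal{F}$ asserts precisely that $\boldsymbol{f}$ preserves every $\rho\in\Inv\mathcal{F}$, so $\boldsymbol{f}\in\sPol\Inv\mathcal{F}$ (with the understanding that $\sPol$ only records the surjective part, which is the sense in which this Galois connection is set up). Dually, any $\rho\in S$ is preserved by every surjective $\boldsymbol{f}\in\sPol S$, so $\rho\in\Inv\sPol S$.

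Finally, the two idempotency equalities are formal consequences. Applying monotonicity of $\Inv$ (which is order-reversing) to the inclusion $\mathcal{F}\subseteq\sPol\Inv\mathcal{F}$ yields $\Inv\sPol\Inv\mathcal{F}\subseteq\Inv\mathcal{F}$, while applying the second extension statement to the set $S\coloneqq\Inv\mathcal{F}$ yields the reverse inclusion $\Inv\mathcal{F}\subseteq\Inv\sPol\Inv\mathcal{F}$; together they give equality. Swapping the roles of $\sPol$ and $\Inv$ in the same argument delivers $\sPol\Inv\sPol S=\sPol S$.

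There is no real obstacle in this observation: it is the abstract skeleton of the Galois connection and requires only bookkeeping. The only mild subtlety is keeping track of the built-in surjectivity constraint in $\sPol$, which is harmless for the purely formal arguments above but will become the substantive content of the next steps, where one must actually identify the closed sets as surjective clones and as quantified relational clones.
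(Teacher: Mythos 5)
Your unpacking is exactly the argument the paper has in mind -- it states this observation without proof, treating it as immediate from the definitions -- and the two monotonicity laws, the relation-side extension $S\subseteq\Inv\sPol S$, and the derivation of both idempotency equalities from monotonicity plus extension are correct as you present them.

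The one point that is not mere bookkeeping is the parenthetical with which you dispose of surjectivity. The step ``$\boldsymbol{f}\in\mathcal{F}$ preserves every $\rho\in\Inv\mathcal{F}$, hence $\boldsymbol{f}\in\sPol\Inv\mathcal{F}$'' is valid only when $\boldsymbol{f}$ is surjective, since surjectivity is built into the definition of $\sPol$; and contrary to your remark that this is ``harmless for the purely formal arguments,'' if $\mathcal{F}$ is allowed to contain a non-surjective operation (as the literal quantification over all $\mathcal{F}\subseteq\OO_A^k$ permits), the inclusion $\mathcal{F}\subseteq\sPol\Inv\mathcal{F}$ fails outright, and so does the equality $\Inv\sPol\Inv\mathcal{F}=\Inv\mathcal{F}$ that you derive from it. Concretely, take $A=\{0,\,1\}$, $k=1$, $\mathcal{F}=\{\mathfrak{0}\}$: then $\Inv\mathcal{F}$ consists of the relations that are empty or contain the all-zero tuple, while $\sPol\Inv\mathcal{F}=\sClo\mathcal{F}$ contains only projections, so $\Inv\sPol\Inv\mathcal{F}$ is the set of all relations, strictly larger than $\Inv\mathcal{F}$. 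The operation-side extension and idempotency laws therefore hold for sets of \emph{surjective} operations, which is the only reading under which the observation is used later (the Galois connection is between sets of surjective $k$-operations and sets of relations, its closed operation-sets being the surjective clones); your proof should state this restriction explicitly instead of waving at it. With that proviso made explicit, the rest of your argument coincides with the paper's intended one.
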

To understand the structure of the closed sets in this Galois connection, we rely on the observation that any $n$-ary $k$-operation on $A = \{1,\dots,\,l\}$ can be represented as a $(l^n \cdot k)$-tuple by listing its values on all $n$-tuples of inputs in a fixed order. This is formalized in the following remark.
\begin{remark}\label{def:operationstuples}
Let $n\in\N$ and let $f:A^n\to A$ be an $n$-ary function. By $\widetilde{f}$, we denote the tuple
\begin{equation*}
    \widetilde{f} \coloneqq \left.\begin{pmatrix}
        f(1,\,1,\dots,\,1,\,1)\\
        f(1,\,1,\dots,\,1,\,2)\\
        \vdots\\
        f(l-1,\,l,\dots,\,l,\,l)\\
        f(l,\,l,\dots,\,l,\,l)
    \end{pmatrix}\!\right\}{\scriptstyle l^n}\,,
\end{equation*}
where arguments of $f$ are all $n$-tuples of elements of $A^n$ in a standard lexicographical order.

Given an~$n$-ary $k$-operation $\boldsymbol{f}\in\OO_{A,n}^k$, by $\widetilde{\boldsymbol{f}}$, we denote the following $(l^n\cdot k)$-tuple:
\begin{equation*}
    \widetilde{\boldsymbol{f}} \coloneqq \left.\begin{pmatrix}
        \widetilde{\boldsymbol{f}^{(1)}}\\
        \widetilde{\boldsymbol{f}^{(2)}}\\
        \vdots\\
        \widetilde{\boldsymbol{f}^{(k)}}
    \end{pmatrix}\!\right\}{\scriptstyle l^n\cdot k}\,.
\end{equation*}

Finally, for a set of $k$-operations $\mathcal{F}\subseteq\OO_A^k$, by $\widetilde{\mathcal{F}}_n$, we denote the following $(l^n \cdot k)$-ary relation:
\begin{equation*}
    \widetilde{\mathcal{F}}_n \coloneqq \{\widetilde{\boldsymbol{f}}\mid \boldsymbol{f}\in\mathcal{F}\cap \OO_{A,n}^k\}
\end{equation*}
in which the first $l^n$ variables are considered to be of the first sort, the second $l^n$ variables are considered to be of the second sort, and this pattern is followed for each successive group of $l^n$ variables.
\end{remark}
\begin{observation}\label{thm:opinclusion}
Let $\boldsymbol{f}\in\OO_A^k$ be an $n$-ary operation and let $\mathcal{F}\subseteq\OO_A^k$. Then $\boldsymbol{f}\in\mathcal{F}$ if and only if $\widetilde{\boldsymbol{f}}\in\widetilde{\mathcal{F}}_n$.
\end{observation}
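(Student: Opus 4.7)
The plan is to unwind both directions directly from the definition of $\widetilde{\mathcal{F}}_n$ given in Remark~\ref{def:operationstuples}. The only substantive ingredient is that the map $\boldsymbol{f}\mapsto\widetilde{\boldsymbol{f}}$, restricted to $n$-ary $k$-operations, is injective: the tuple $\widetilde{\boldsymbol{f}}$ lists the values of each component function $\boldsymbol{f}^{(i)}$ on all $n$-tuples of inputs in a fixed lexicographic order, so two $n$-ary $k$-operations that agree on $\widetilde{\,\cdot\,}$ must agree on every component at every input, hence must be equal.

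For the forward implication, I would note that if $\boldsymbol{f}\in\mathcal{F}$ and $\boldsymbol{f}$ is $n$-ary, then $\boldsymbol{f}\in\mathcal{F}\cap\OO_{A,n}^k$, and therefore $\widetilde{\boldsymbol{f}}\in\widetilde{\mathcal{F}}_n$ by the very definition of $\widetilde{\mathcal{F}}_n$. For the reverse implication, if $\widetilde{\boldsymbol{f}}\in\widetilde{\mathcal{F}}_n$, then by definition there exists $\boldsymbol{g}\in\mathcal{F}\cap\OO_{A,n}^k$ with $\widetilde{\boldsymbol{g}}=\widetilde{\boldsymbol{f}}$; by injectivity $\boldsymbol{g}=\boldsymbol{f}$, so $\boldsymbol{f}\in\mathcal{F}$.

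There is no real obstacle here; the entire content is a bookkeeping check that the encoding $\boldsymbol{f}\mapsto\widetilde{\boldsymbol{f}}$ faithfully records $\boldsymbol{f}$. The only thing worth flagging explicitly is that injectivity uses both the fact that the inputs range over all of $A^n$ (not a proper subset) and that all $k$ component functions are recorded, which is exactly how $\widetilde{\boldsymbol{f}}$ was constructed as a $(l^n\cdot k)$-tuple in Remark~\ref{def:operationstuples}.
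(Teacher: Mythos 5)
Your proof is correct and is exactly the routine unwinding the paper intends: the observation is stated without proof as an immediate consequence of the encoding in Remark~\ref{def:operationstuples}, and your argument (definition for the forward direction, injectivity of $\boldsymbol{f}\mapsto\widetilde{\boldsymbol{f}}$ for the converse) is the standard bookkeeping check behind it.
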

\subsection{Closed sets of operations}
\begin{theorem}\label{thm:polinv}
Let $\mathcal{F}\subseteq\OO_{A}^k$. Then
\begin{align*}
    \Pol\Inv\mathcal{F} &= \Clo \mathcal{F}\,,\\
    \sPol\Inv\mathcal{F} &= \sClo \mathcal{F}\,.
\end{align*}
\end{theorem}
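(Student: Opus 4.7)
The plan is to prove $\Pol\Inv\mathcal{F}=\Clo\mathcal{F}$ first; the surjective equality then follows immediately by intersecting both sides with the set of surjective $k$-operations, since by definition both $\sPol\Inv\mathcal{F}$ and $\sClo\mathcal{F}$ are obtained from their non-surjective counterparts by exactly this intersection.

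The easy inclusion $\Clo\mathcal{F}\subseteq\Pol\Inv\mathcal{F}$ follows from the fact that $\Pol\Inv\mathcal{F}$ is a clone containing $\mathcal{F}$: projections act coordinate-wise as identities and hence preserve every relation, and the property of preserving a fixed relation is closed under composition, with the multi-sorted structure entering only through the requirement that sorts line up, which is automatic in the definition of composition of $k$-operations.

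The substantial content is the reverse inclusion $\Pol\Inv\mathcal{F}\subseteq\Clo\mathcal{F}$, for which I adopt the classical \emph{indicator relation} argument adapted to the multi-sorted setting. Fix an $n$-ary $\boldsymbol{f}\in\Pol\Inv\mathcal{F}$ and consider the $(l^n\cdot k)$-ary relation $\rho_n:=\widetilde{\Clo\mathcal{F}}_n$ from Remark~\ref{def:operationstuples}, with its sort pattern in blocks of $l^n$ as specified there. First, $\rho_n\in\Inv\mathcal{F}$: for any $m$-ary $\boldsymbol{g}\in\mathcal{F}$ and any tuples $\widetilde{\boldsymbol{h}_1},\dots,\widetilde{\boldsymbol{h}_m}\in\rho_n$ with $\boldsymbol{h}_j\in\Clo\mathcal{F}\cap\OO_{A,n}^k$, applying $\boldsymbol{g}$ sort-respectingly coordinate by coordinate produces exactly $\widetilde{\boldsymbol{g}(\boldsymbol{h}_1,\dots,\boldsymbol{h}_m)}$, which lies in $\rho_n$ since $\Clo\mathcal{F}$ is closed under composition with $\boldsymbol{g}$. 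Second, the $n$ projection $k$-operations $\boldsymbol{p}_{n,1}^k,\dots,\boldsymbol{p}_{n,n}^k$ all belong to $\Clo\mathcal{F}$, so the tuples $\widetilde{\boldsymbol{p}_{n,1}^k},\dots,\widetilde{\boldsymbol{p}_{n,n}^k}$ all lie in $\rho_n$. Third, since $\boldsymbol{f}$ preserves $\rho_n$ by hypothesis, applying $\boldsymbol{f}$ to these $n$ projection tuples yields a tuple in $\rho_n$; a direct coordinate-by-coordinate check identifies this tuple as $\widetilde{\boldsymbol{f}}$, whence Observation~\ref{thm:opinclusion} gives $\boldsymbol{f}\in\Clo\mathcal{F}$.

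The main obstacle is the bookkeeping behind the third step, which rests on the interplay between the lexicographic enumeration of Remark~\ref{def:operationstuples} and the action of projections. Concretely, for each sort $s\in E_k$ and each position $j$ in the $s$-th block of $\rho_n$, writing the $j$-th lexicographic tuple of $A^n$ as $\bar{a}^{(j)}=(a_1^{(j)},\dots,a_n^{(j)})$, the $j$-th entry of $\widetilde{\boldsymbol{p}_{n,i}^k}$ equals $\boldsymbol{p}_{n,i}^{k(s)}(\bar{a}^{(j)})=a_i^{(j)}$; hence applying $\boldsymbol{f}^{(s)}$ across $i=1,\dots,n$ evaluates $\boldsymbol{f}^{(s)}$ on $\bar{a}^{(j)}$, which by definition is the $j$-th entry of $\widetilde{\boldsymbol{f}^{(s)}}$. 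Once this identification is established the multi-sorted aspect becomes transparent and the argument reduces to the classical single-sorted case; everything else is formal.
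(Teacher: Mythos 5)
Your proposal is correct and follows essentially the same route as the paper: the trivial inclusion via $\Pol\Inv\mathcal{F}$ being a clone containing $\mathcal{F}$, and the converse via showing $\widetilde{\Clo\mathcal{F}}_n\in\Inv\mathcal{F}$ and evaluating $\boldsymbol{f}$ on the projection tuples to conclude $\widetilde{\boldsymbol{f}}\in\widetilde{\Clo\mathcal{F}}_n$, with the surjective case obtained by restricting to surjective operations. The extra lexicographic bookkeeping you spell out is exactly what the paper leaves implicit.
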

\begin{proof}
We will only prove that $\Pol\Inv\mathcal{F} = \Clo \mathcal{F}$ as $\sPol\Inv\mathcal{F} = \sClo \mathcal{F}$ is an~immediate corollary obtained by considering only the surjective $k$-operations.

First, we may notice that $\Clo\Pol S = \Pol S$ for any set $S\subseteq R_A^k$. Indeed, all $k$-projections preserve all relations, and if some $k$-operations preserve a~certain relation, their composition preserves such relation as well. Due to this, we have the inclusion
\begin{equation*}
    \Pol\Inv\mathcal{F} \supseteq \Clo \mathcal{F}
\end{equation*}
as $\Clo \mathcal{F}$ is the smallest $k$-clone containing $\mathcal{F}$ and $\Pol\Inv\mathcal{F}$ is some $k$-clone containing $\mathcal{F}$.

For the converse inclusion, we consider any $\boldsymbol{f}\in \Pol\Inv\mathcal{F}$. Let $n$ be the arity of $\boldsymbol{f}$. Let us prove the following claim.

\begin{claim}\label{thm:galoiscloclaim}
$\widetilde{\Clo\mathcal{F}}_n \in \Inv\mathcal{F}$.
\end{claim}
\begin{proof}[Proof of Claim \ref{thm:galoiscloclaim}.]
Any $\boldsymbol{a}_1,\dots,\,\boldsymbol{a}_m\in \widetilde{\Clo\mathcal{F}}_n$ correspond to some $n$-ary $k$-operations $\boldsymbol{h}_1,\dots,\,\boldsymbol{h}_m\in \Clo\mathcal{F}$, thus, for any $m$-ary $k$-operation $\boldsymbol{g}\in\mathcal{F}$, the tuple $\boldsymbol{g}(\boldsymbol{a}_1,\dots,\,\boldsymbol{a}_m)$ corresponds to the composition of $\boldsymbol{g}$ with $\boldsymbol{h}_1,\dots,\,\boldsymbol{h}_m$, which is already contained in $\Clo\mathcal{F}$ as it is a clone. Thus $\boldsymbol{g}(\boldsymbol{a}_1,\dots,\,\boldsymbol{a}_m)\in\widetilde{\Clo\mathcal{F}}_n$.
\end{proof}
Now, applying $\Pol$ on both sides of $\widetilde{\Clo\mathcal{F}}_n \in \Inv\mathcal{F}$, we get
\begin{equation*}
    \boldsymbol{f}\in\Pol\Inv\mathcal{F} \subseteq \Pol \{\widetilde{\Clo\mathcal{F}}_n\}\,.
\end{equation*}
In particular, this means that $\widetilde{\boldsymbol{f}}\in\widetilde{\Clo\mathcal{F}}_n$ as $\widetilde{\boldsymbol{f}}$ is just the $k$-operation $\boldsymbol{f}$ evaluated at the tuples in $\widetilde{\Clo\mathcal{F}}_n$ corresponding to the projections. Therefore, $\boldsymbol{f}\in\Clo\mathcal{F}$ by Observation \ref{thm:opinclusion}. This finishes the proof of the inclusion
\begin{equation*}
    \Pol\Inv\mathcal{F} \subseteq \Clo \mathcal{F}\,.
\end{equation*}
\end{proof}
\subsection{Closed sets of relations/predicates}
\begin{lemma}\label{thm:invisrelclo}
Let $\mathcal{F}\subseteq\OO_A^k$. Then $\Inv{\mathcal{F}}$ is a~relational clone, that is
\begin{equation*}
    \pp{\Inv\mathcal{F}} = \Inv\mathcal{F}\,.
\end{equation*}
Moreover, if $\mathcal{F}$ contains only surjective operations, then $\Inv{\mathcal{F}}$ is a~quantified relational clone, that is
\begin{equation*}
    \qpp{\Inv\mathcal{F}} =\Inv\mathcal{F}\,.
\end{equation*}
\end{lemma}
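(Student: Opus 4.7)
The plan is to establish $\pp{\Inv\mathcal{F}} \subseteq \Inv\mathcal{F}$ (the reverse inclusion being (cl1)) by checking that $\Inv\mathcal{F}$ is closed under each of the basic building blocks of a pp formula, and then to handle the additional closure under $\forall$ in the surjective case by a single separate argument. So the skeleton is:
\begin{enumerate}
\item $\sigma_\bot \in \Inv\mathcal{F}$ vacuously, and each $\sigma_=^i \in \Inv\mathcal{F}$ since applying any $\boldsymbol{f}$ coordinatewise to equal columns produces equal entries.
\item Closure under permutation of variables, addition of dummy variables, and identification of variables of the same sort is immediate from the definition of ``preserves,'' exactly as in the single-sorted case -- sorts never cause trouble because identification is only allowed within a sort.
\item Closure under (finite) conjunction: if $\rho_1, \rho_2 \in \Inv\mathcal{F}$ with matching variable sorts, then any tuple in $\rho_1 \cap \rho_2$ yields columns that lie in both $\rho_1$ and $\rho_2$, so applying $\boldsymbol{f}$ coordinatewise stays in both.
\item Closure under existential quantification: if $\rho(y, x_1, \ldots, x_{n-1}) \in \Inv\mathcal{F}$, let $\rho'(x_1,\ldots,x_{n-1}) = \exists y\, \rho(y,x_1,\ldots,x_{n-1})$. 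Given tuples $(a_{1,j},\ldots,a_{n-1,j}) \in \rho'$ for $j = 1,\ldots,m$, pick witnesses $b_j$ with $(b_j, a_{1,j},\ldots,a_{n-1,j}) \in \rho$, apply $\boldsymbol{f}$, and use that $\rho \in \Inv\mathcal{F}$; the first coordinate of the output witnesses that the remaining coordinates lie in $\rho'$.
\end{enumerate}
Steps (1)--(4) exhaust the definition of $\pp{S}$, so together they give $\pp{\Inv\mathcal{F}} = \Inv\mathcal{F}$.

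For the ``moreover'' part, assume every $\boldsymbol{f} \in \mathcal{F}$ is surjective. The only additional constructor in $\qpp{-}$ beyond those in $\pp{-}$ is universal quantification, so it suffices to prove that $\Inv\mathcal{F}$ is closed under (eo5). Suppose $\rho(y^s, x_1, \ldots, x_{n-1}) \in \Inv\mathcal{F}$, where $s$ is the sort of $y$, and let $\rho'(x_1,\ldots,x_{n-1}) = \forall y\, \rho(y,x_1,\ldots,x_{n-1})$. Fix $\boldsymbol{f} \in \mathcal{F}$ of arity $m$ and suppose $(a_{1,j},\ldots,a_{n-1,j}) \in \rho'$ for $j = 1,\ldots,m$. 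To show that $\boldsymbol{f}$ applied columnwise lies in $\rho'$, fix any $b \in A$. By surjectivity of $\boldsymbol{f}^{(s)}$, choose $b_1,\ldots,b_m \in A$ with $\boldsymbol{f}^{(s)}(b_1,\ldots,b_m) = b$. Then $(b_j, a_{1,j}, \ldots, a_{n-1,j}) \in \rho$ for every $j$ by the definition of $\rho'$, and applying $\boldsymbol{f}$ coordinatewise yields a tuple in $\rho$ whose first coordinate is exactly $b$. Since $b$ was arbitrary, the tail lies in $\rho'$, as required.

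The main (and really the only) obstacle is step (5), the universal quantification case: this is the one place where the argument for $\pp{-}$ breaks down without extra assumptions, and it is exactly where the surjectivity hypothesis on $\mathcal{F}$ is consumed -- it is used to realize any prescribed value $b$ in the quantified coordinate as an output of $\boldsymbol{f}^{(s)}$. All other steps are routine transcriptions of the single-sorted arguments, and the sort bookkeeping in Definition~\ref{def:relations} is designed precisely so that identifications, quantifications, and conjunctions remain well defined.
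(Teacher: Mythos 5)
Your proposal is correct and follows essentially the same route as the paper: both verify that $\Inv\mathcal{F}$ contains $\sigma_\bot$ and the equalities and is closed under variable manipulations, conjunction, and existential quantification (the paper phrases this via the elementary operations (eo1)--(eo6) and Lemma~\ref{thm:eo}, but the arguments are the same coordinatewise checks). Your treatment of universal quantification -- using surjectivity of $\boldsymbol{f}^{(s)}$ to realize an arbitrary value $b$ in the quantified coordinate -- is exactly the paper's argument for (eo5).
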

\begin{proof}
Since empty relations and equalities are preserved by every surjective $k$-operation we only need to check that $\Inv\mathcal{F}$ is closed under the elementary operations (Definition \ref{def:eo} and Lemma \ref{thm:eo}). The arguments are mostly simple, we outline the basic ideas. Let $\rho,\,\rho'\in \Inv\mathcal{F}$.
\begin{itemize}
    \item Closedness under (eo1) and (eo2) is clear.
    \item If $\sigma$ is defined from $\rho$ by identification of variables (eo3), then $\sigma\in\Inv\mathcal{F}$ since operations are applied coordinate-wise.
    \item If $\sigma$ is defined from $\rho$ and $\rho'$ by conjunction (eo6), we have $\sigma=\rho\cap\rho'$. Since both $\rho$ and $\rho'$ are invariant under all $k$-operations in $\mathcal{F}$, applying any such $k$-operation to tuples from the intersection $\rho \cap \rho'$ will result in a tuple that also belongs to the intersection. Thus, $\sigma = \rho\cap\rho' \in\Inv\mathcal{F}$.
    \item If $\sigma$ is defined from $\rho$ by existential quantification of a variable, then $\sigma\in\Inv\mathcal{F}$ since operations are applied coordinate-wise. It follows that $\Inv\mathcal{F}$ is closed under composition (eo4).
\end{itemize}
This finishes the proof of $\pp{\Inv\mathcal{F}} = \Inv\mathcal{F}$.

Now assume that all operations in $\mathcal{F}$ are surjective. The main reason why $\Inv\mathcal{F}$ is closed under universal quantification (eo5) is as follows:
\begin{itemize}
    \item Let $\boldsymbol{f}\in\mathcal{F}$ be an $n$-ary operation, $\rho\in\Inv\mathcal{F}$, and
    \begin{equation*}
        \sigma(x_1,\dots,\,x_m) = \forall y\, \rho(y,\,x_1,\,x_2,\dots,\,x_m)\,.
    \end{equation*}
    If $\boldsymbol{a}_1,\dots,\,\boldsymbol{a}_m\in\sigma$, then 
    \begin{equation*}
        \begin{pmatrix}
            b_1 \\
            \boldsymbol{a}_1^{(1)}\\
            \boldsymbol{a}_1^{(2)}\\
            \vdots\\
            \boldsymbol{a}_1^{(m)}
        \end{pmatrix},\begin{pmatrix}
            b_2 \\
            \boldsymbol{a}_2^{(1)}\\
            \boldsymbol{a}_2^{(2)}\\
            \vdots\\
            \boldsymbol{a}_2^{(m)}
        \end{pmatrix},\dots,\,\begin{pmatrix}
            b_n \\
            \boldsymbol{a}_n^{(1)}\\
            \boldsymbol{a}_n^{(2)}\\
            \vdots\\
            \boldsymbol{a}_n^{(m)}
        \end{pmatrix}\in\rho
    \end{equation*}
    for all $b_1,\,b_2,\dots,\,b_n\in A$. Since $\boldsymbol{f}$ is surjective operations which preserves $\rho$, this also means that
    \begin{equation*}
        \begin{pmatrix}
            b \\ f(\boldsymbol{a}_1^{(1)},\,\boldsymbol{a}_2^{(1)},\dots,\,\boldsymbol{a}_n^{(1)}) \\
            f(\boldsymbol{a}_1^{(2)},\,\boldsymbol{a}_2^{(2)},\dots,\,\boldsymbol{a}_n^{(2)}) \\
            \vdots \\
            f(\boldsymbol{a}_1^{(m)},\,\boldsymbol{a}_2^{(m)},\dots,\,\boldsymbol{a}_n^{(m)})
        \end{pmatrix}\in\rho
    \end{equation*}
    for all $b\in A$. Therefore,
    \begin{equation*}
        \begin{pmatrix}f(\boldsymbol{a}_1^{(1)},\,\boldsymbol{a}_2^{(1)},\dots,\,\boldsymbol{a}_n^{(1)}) \\
            f(\boldsymbol{a}_1^{(2)},\,\boldsymbol{a}_2^{(2)},\dots,\,\boldsymbol{a}_n^{(2)}) \\
            \vdots \\
            f(\boldsymbol{a}_1^{(m)},\,\boldsymbol{a}_2^{(m)},\dots,\,\boldsymbol{a}_n^{(m)})
        \end{pmatrix}\in\sigma\,.
    \end{equation*}
    Hence, $\Inv\mathcal{F}$ is closed under universal quantification of a variable (eo5).\qedhere
\end{itemize}
\end{proof}
\begin{lemma}\label{thm:invpolstep1}
Let $S\subseteq R_A^k$ and let $n\in\N$. Then $\widetilde{\Pol S}_n\in\pp{S}$. 
\end{lemma}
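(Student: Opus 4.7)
The plan is to write down an explicit primitive positive formula over $S$ that defines $\widetilde{\Pol S}_n$. By the encoding from Remark~\ref{def:operationstuples}, the coordinates of a tuple in $\widetilde{\Pol S}_n$ are indexed by pairs $(i,\boldsymbol{c})$ with $i\in E_k$ and $\boldsymbol{c}\in A^n$, the $(i,\boldsymbol{c})$-entry being $\boldsymbol{f}^{(i)}(\boldsymbol{c})$. Accordingly I introduce a variable $y_{i,\boldsymbol{c}}$ of sort $i$ for each such pair, and aim to describe $\widetilde{\Pol S}_n$ as a conjunction of atomic constraints on these variables.

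First I fix a single relation $\rho\in S$ of arity $m$ with sort pattern $(s_1,\dots,s_m)$ and unpack the preservation condition directly from its definition. A $k$-operation $\boldsymbol{f}$ preserves $\rho$ precisely when, for every $m\times n$ matrix $M$ over $A$ all of whose $n$ columns lie in $\rho$, applying $\boldsymbol{f}^{(s_j)}$ to the $j$-th row produces a tuple in $\rho$. Reading the $j$-th row $M_{j,*}$ as an element of $A^n$, this becomes the single sort-correct atomic conjunct
\begin{equation*}
\rho\bigl(y_{s_1,M_{1,*}},\,y_{s_2,M_{2,*}},\,\dots,\,y_{s_m,M_{m,*}}\bigr).
\end{equation*}
Since $A$ is finite there are only finitely many such matrices, so the conjunction over all valid $M$ is a finite pp-formula over $\{\rho\}$ defining $\widetilde{\Pol\{\rho\}}_n$.

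To finish, observe that $\widetilde{\Pol S}_n=\bigcap_{\rho\in S}\widetilde{\Pol\{\rho\}}_n$ is a subset of the finite set $A^{l^n\cdot k}$, so only finitely many $\rho_1,\dots,\rho_t\in S$ are needed for the intersection to be realized. Concatenating the finite pp-formulas produced above for $\rho_1,\dots,\rho_t$ yields a single finite pp-formula over $S$ that defines $\widetilde{\Pol S}_n$, placing it in $\pp{S}$ as required.

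The only genuinely delicate point is the coordinate/sort bookkeeping in the second paragraph: one must verify that the assignment ``row $M_{j,*}$ of a valid matrix for $\rho$ corresponds to the variable $y_{s_j,M_{j,*}}$'' both matches the entry $\boldsymbol{f}^{(s_j)}(M_{j,*})$ of $\widetilde{\boldsymbol{f}}$ and respects the sort of the $j$-th variable of $\rho$. Once this dictionary is pinned down, correctness of the formula is immediate from the definition of preservation, and the rest of the argument is purely combinatorial.
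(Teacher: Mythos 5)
Your proposal is correct and matches the paper's proof essentially verbatim: the paper defines $\widetilde{\Pol S}_n$ by the same quantifier-free conjunction, over relations $\rho\in S$ and tuples $\boldsymbol{a}_1,\dots,\boldsymbol{a}_n\in\rho$ (your matrices $M$ with columns in $\rho$), of atomic constraints on variables indexed by sort and $n$-tuple, and it handles infinite $S$ by the same observation that finiteness of the domain allows restricting to finitely many conjuncts.
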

\begin{proof}
We will verify that $\widetilde{\Pol S}_n = \sigma$, where
\begin{multline*}
    \sigma(x_{0,\dots,\,0}^1,\dots,\,x_{l-1,\dots,\,l-1}^1,\dots,\,x_{0,\dots,\,0}^k,\dots,\,x_{l-1,\dots,\,l-1}^k) =\\= \bigwedge_{\substack{\rho\,\in\, S\\ \boldsymbol{a}_1,\dots,\,\boldsymbol{a}_n\,\in\,\rho}} \rho\left(x_{\boldsymbol{a}_1^{(1)},\,\boldsymbol{a}_2^{(1)},\dots,\,\boldsymbol{a}_n^{(1)}}^{i_{\rho,\,1}},\dots,\,x_{\boldsymbol{a}_1^{(m_\rho)},\,\boldsymbol{a}_2^{(m_\rho)},\dots,\,\boldsymbol{a}_n^{(m_\rho)}}^{i_{\rho,\,m_\rho}}\right)\,.
\end{multline*}
Here, for $\rho \in S$, the arity of $\rho$ is denoted by $m_\rho$, and the sort of its $j$-th variable by $i_{\rho,\,j}$. Let us remark that although this conjunction is infinite when $S$ is infinite, the domain is finite, so we can restrict to a finite number of terms without changing result.

The reason behind the equality $\widetilde{\Pol S}_n = \sigma$ is as follows. We may consider the tuples in $\sigma$ to uniquely correspond to images of $n$-ary $k$-operations in some $\mathcal{F}\subseteq\OO_A^k$ according to Remark~\ref{def:operationstuples}; that is, $\sigma = \widetilde{\mathcal{F}}_n$. Each term of the form
\begin{equation*}
    \rho\left(x_{\boldsymbol{a}_1^{(1)},\,\boldsymbol{a}_2^{(1)},\dots,\,\boldsymbol{a}_n^{(1)}}^{i_{\rho,\,1}},\dots,\,x_{\boldsymbol{a}_1^{(m_\rho)},\,\boldsymbol{a}_2^{(m_\rho)},\dots,\,\boldsymbol{a}_n^{(m_\rho)}}^{i_{\rho,\,m_\rho}}\right)
\end{equation*}
then ensures that, for every $\boldsymbol{f}\in\mathcal{F}$, we have
\begin{equation*}
    \boldsymbol{f}\left(\boldsymbol{a}_1,\dots,\,\boldsymbol{a}_n\right)\in\rho\,.
\end{equation*}
This means that $\mathcal{F}$ contains exactly all the $n$-ary $k$-operations that preserve all the relations in $S$, and thus $\sigma = \widetilde{\mathcal{F}}_n = \widetilde{\Pol S}_n$.
\end{proof}
\begin{lemma}\label{thm:invpolstep2}
Let $S\subseteq R_A^k$ and let $\rho\in \Inv\Pol S$ be a relation of size $n$. Then $\rho\in\pp{\widetilde{\Pol S}_n}$.
\end{lemma}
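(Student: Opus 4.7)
The plan is to exhibit an explicit pp-definition of $\rho$ over $\widetilde{\Pol S}_n$, in the spirit of the classical Geiger/Bodnarchuk argument adapted to the multi-sorted setting. First I would enumerate the tuples of $\rho$ as $\boldsymbol{b}_1, \ldots, \boldsymbol{b}_n$, let $m$ denote the arity of $\rho$, and let $i_j \in E_k$ denote the sort of its $j$-th coordinate. For each $j \in E_m$, I form the ``column vector'' $c_j = (\boldsymbol{b}_1^{(j)}, \ldots, \boldsymbol{b}_n^{(j)}) \in A^n$. The pair $(c_j, i_j)$ picks out one of the $l^n \cdot k$ coordinates of $\widetilde{\Pol S}_n$ (and this coordinate has sort $i_j$ by the convention of Remark~\ref{def:operationstuples}), so these are the positions where $\rho$ will ``read off'' its columns from polymorphism-tuples.

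Next I would define a relation $\sigma$ of arity $m$ by the pp-formula
\begin{equation*}
    \sigma(x_1, \ldots, x_m) = \exists \, (\text{all other coordinates}) \; \widetilde{\Pol S}_n(\ldots, x_j \text{ placed at position } (c_j, i_j), \ldots),
\end{equation*}
where, if two pairs $(c_j, i_j)$ and $(c_{j'}, i_{j'})$ coincide, the variables $x_j$ and $x_{j'}$ are identified. Since only coordinates of matching sort get identified, this is a legal sorted pp-formula, so $\sigma \in \pp{\widetilde{\Pol S}_n}$, and it remains to prove $\sigma = \rho$. For $\rho \subseteq \sigma$, given $\boldsymbol{b}_r \in \rho$ I would use the $n$-ary projection $\boldsymbol{p}_{n,r}^k$, which lies in $\Pol S$ (every projection preserves every relation), so $\widetilde{\boldsymbol{p}_{n,r}^k} \in \widetilde{\Pol S}_n$; at position $(c_j, i_j)$ its value is exactly $\boldsymbol{b}_r^{(j)}$, while values at all other positions serve as witnesses for the existentially quantified variables. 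For $\sigma \subseteq \rho$, any $\boldsymbol{x} \in \sigma$ is witnessed by some $\widetilde{\boldsymbol{f}} \in \widetilde{\Pol S}_n$, i.e.\ by some $\boldsymbol{f} \in \Pol S \cap \OO_{A,n}^k$, satisfying $\boldsymbol{f}^{(i_j)}(\boldsymbol{b}_1^{(j)}, \ldots, \boldsymbol{b}_n^{(j)}) = x_j$ for each $j$; unpacking this precisely says $\boldsymbol{x} = \boldsymbol{f}(\boldsymbol{b}_1, \ldots, \boldsymbol{b}_n)$, and since $\rho \in \Inv \Pol S$ and $\boldsymbol{b}_1, \ldots, \boldsymbol{b}_n \in \rho$, preservation forces $\boldsymbol{x} \in \rho$.

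The only subtle step is the bookkeeping around sorts. The distinguished positions $(c_j, i_j)$ must be handled correctly so that the variable placed there carries sort $i_j$, matching the sort of the $j$-th coordinate of $\rho$; this is automatic from the definition of $\widetilde{\Pol S}_n$ in Remark~\ref{def:operationstuples}, where the coordinates are grouped into blocks of length $l^n$ by sort. The potential annoyance of coinciding positions $(c_j, i_j) = (c_{j'}, i_{j'})$ dissolves because any polymorphism tuple $\widetilde{\boldsymbol{f}}$ necessarily takes the same value at identical coordinates, forcing $x_j = x_{j'}$ in $\sigma$ exactly when the corresponding columns of $\rho$ coincide — which is consistent with the values that $\rho$ itself takes. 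Once this is in place, the argument reduces to invoking the definitions of $\Inv$, $\Pol$, and the projection polymorphisms, with no further ingredients needed.
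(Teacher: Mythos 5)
Your proposal is correct and follows essentially the same route as the paper's proof: your distinguished positions $(c_j,\,i_j)$ are exactly the paper's row indices $r_j$ (which the paper extracts via the auxiliary matrix $\theta$ of projection tuples), the inclusion $\rho\subseteq\sigma$ is witnessed by the projections $\boldsymbol{p}_{n,r}^k$ in both arguments, and the converse uses preservation of $\rho$ by the polymorphism $\boldsymbol{f}$ encoded by the witnessing tuple of $\widetilde{\Pol S}_n$. The only cosmetic difference is that the paper places fresh existential variables at all $kl^n$ positions and adds equality conjuncts $x_j = y_{r_j}$, which also disposes of your coinciding-position bookkeeping automatically.
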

\begin{proof}
For the purpose of this proof, we assume that the tuples of every relation are ordered so that a $p$-ary relation of size $q$ can be viewed as a $p \times q$ matrix. To that end, we fix an arbitrary order of tuples of $\rho$. We also let $m$ be the arity of $\rho$.

Using the notation from Definition~\ref{def:operations}, we let $\boldsymbol{p}_{n,i}^k$ denote the $n$-ary projection $k$-operation to the $i$-th coordinate. We now define the following $(k\cdot l^n)$-ary relation of size $n$:
\begin{equation*}
    \theta = \begin{pmatrix}
        \widetilde{\boldsymbol{p}_{n,1}^k} & \widetilde{\boldsymbol{p}_{n,2}^k} & \cdots & \widetilde{\boldsymbol{p}_{n,n}^k}
    \end{pmatrix}\,.
\end{equation*}
Observe that the rows of $\theta$ are structured as $k$ blocks, each containing all $n$-tuples of elements from $A$, arranged in lexicographical order. By this observation, for each $i\in E_m$, there is a unique $r_i\in E_{kl^n}$ such that
\begin{itemize}
    \item when viewed as a matrix, the $i$-th row of $\rho$ is the same as the $r_i$-th row of $\theta$,
    \item the $i$-th variable of $\rho$ and the $r_i$-th variable of $\theta$ have the same sort $s_i$.
\end{itemize}
Equivalently, this means that
\begin{equation*}
    \rho = \begin{pmatrix}
            \text{$r_1$-th row of $\theta$}\\
            \text{$r_2$-th row of $\theta$}\\
            \vdots\\
            \text{$r_m$-th row of $\theta$}
    \end{pmatrix}\,, \tag{$\ast$}
\end{equation*}
which can also be formally expressed by the following pp formula:
\begin{multline*}
    \rho(x_1,\dots,\,x_m) =\\= \exists y_1\dots \exists y_{kl^n}\, \theta(y_1,\dots,\,y_{kl^n}) \land (x_1 = y_{r_1}) \land \dots \land (x_m = y_{r_m})\,.
\end{multline*}
We now claim that $\rho$ is defined by the following pp formula:
\begin{multline*}
    \rho(x_1,\dots,\,x_m) =\\= \exists y_1\dots \exists y_{kl^n}\, \widetilde{\Pol S}_n(y_1,\dots,\,y_{kl^n}) \land (x_1 = y_{r_1}) \land \dots \land (x_m = y_{r_m})\,.
\end{multline*}
The inclusion $\subseteq$ follows immediately from the definition of $\theta$ since $\Pol S$ is a clone and thus contains all projections. For the converse inclusion, let us consider a tuple
\begin{equation*}
    \boldsymbol{a}\in \exists y_1\dots \exists y_{kl^n}\, \widetilde{\Pol S}_n(y_1,\dots,\,y_{kl^n}) \land (x_1 = y_{r_1}) \land \dots \land (x_m = y_{r_m})\,.
\end{equation*}
By Remark~\ref{def:operationstuples}, $\widetilde{\Pol S}_n$ consists of precisely the $n$-ary $k$-operations from $\Pol S$ evaluated on the rows of $\theta$. Thus, there exists an $n$-ary $k$-operation $\boldsymbol{f} \in \Pol S$ such that
\begin{equation*}
    \boldsymbol{a} = \begin{pmatrix}
            \boldsymbol{f}^{(s_1)}(\text{$r_1$-th row of $\theta$})\\
            \boldsymbol{f}^{(s_2)}(\text{$r_2$-th row of $\theta$})\\
            \vdots\\
            \boldsymbol{f}^{(s_m)}(\text{$r_m$-th row of $\theta$})
    \end{pmatrix}\,.
\end{equation*}
Finally, since $\rho$ is preserved by all $k$-operations from $\Pol S$, we get that $\boldsymbol{a}\in\nobreak\rho$.
\end{proof}
\begin{corollary}\label{thm:invpol}
Let $S\subseteq R_A^k$. Then
\begin{equation*}
    \Inv\Pol(S) = \pp{S}\,.
\end{equation*}
\end{corollary}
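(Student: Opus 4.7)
The plan is to establish the two inclusions separately, each of which reduces to combining results already proved in this subsection.

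For the inclusion $\pp{S} \subseteq \Inv\Pol(S)$, I would use the abstract Galois machinery. By Observation \ref{thm:galoisproperty} (applied to $\Pol$-$\Inv$ in place of $\sPol$-$\Inv$, which behaves identically at this level), we have $S \subseteq \Inv\Pol S$. By Lemma \ref{thm:invisrelclo}, the set $\Inv\Pol S$ is a relational clone, i.e.\ it is closed under $\pp{-}$. Applying $\pp{-}$ to both sides of $S \subseteq \Inv\Pol S$ and using closedness gives $\pp{S} \subseteq \pp{\Inv\Pol S} = \Inv\Pol S$, as desired.

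For the converse inclusion $\Inv\Pol(S) \subseteq \pp{S}$, I would take an arbitrary $\rho \in \Inv\Pol S$ and let $n$ denote its size (the number of tuples, not its arity, to match the hypothesis of Lemma \ref{thm:invpolstep2}). Lemma \ref{thm:invpolstep2} yields $\rho \in \pp{\widetilde{\Pol S}_n}$, while Lemma \ref{thm:invpolstep1} gives $\widetilde{\Pol S}_n \in \pp{S}$. Chaining these via idempotence of the pp-closure operator produces
\begin{equation*}
\rho \in \pp{\widetilde{\Pol S}_n} \subseteq \pp{\pp{S}} = \pp{S}\,,
\end{equation*}
which finishes the proof.

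There is essentially no obstacle here, since all the real content has already been absorbed into Lemmata \ref{thm:invisrelclo}, \ref{thm:invpolstep1}, and \ref{thm:invpolstep2}; the corollary is merely the bookkeeping that assembles them into the Galois statement. The only mild subtlety worth flagging in writing is the distinction between the \emph{arity} of $\rho$ and the number of \emph{tuples} of $\rho$ (which is the parameter $n$ fed into $\widetilde{\Pol S}_n$); being careful that these are the correct indices is the one spot where a reader could otherwise get confused.
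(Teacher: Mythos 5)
Your proposal is correct and follows essentially the same route as the paper: the inclusion $\pp{S}\subseteq\Inv\Pol S$ from Lemma~\ref{thm:invisrelclo} (pp-closedness of $\Inv$ of a set of operations, together with the trivial Galois fact $S\subseteq\Inv\Pol S$), and the converse by chaining Lemma~\ref{thm:invpolstep2} with Lemma~\ref{thm:invpolstep1}. Your remark that the parameter $n$ in $\widetilde{\Pol S}_n$ is the \emph{size} of $\rho$, not its arity, is exactly the right reading of Lemma~\ref{thm:invpolstep2}.
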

\begin{proof}
The inclusion $\supseteq$ follows from Lemma~\ref{thm:invisrelclo}. The converse inclusion then follows from lemmata~\ref{thm:invpolstep1} and \ref{thm:invpolstep2}.
\end{proof}
Together with Theorem~\ref{thm:polinv}, we have proved the $\Pol$-$\Inv$ Galois connection for the multi-sorted clones. It remains to extend this result for the quantified relational clones.
\begin{lemma}\label{thm:leastrel}
Let $S\subseteq R_A^k$ and let $\rho\in R_A^k$. Then
\begin{equation*}
    \bigcap_{\substack{\sigma\,\in\,\pp{S}\\ \rho\,\subseteq\,\sigma}} \sigma = \{\boldsymbol{f}(\boldsymbol{b}_1,\dots,\,\boldsymbol{b}_n)\mid \boldsymbol{b}_1,\dots,\,\boldsymbol{b}_n\in\rho,\,\boldsymbol{f}\in\Pol S\}\,.
\end{equation*}
\end{lemma}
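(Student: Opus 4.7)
Write $\rho'$ for the intersection on the left-hand side and $\rho''$ for the set on the right-hand side. The plan is to prove the two inclusions separately, using Corollary~\ref{thm:invpol} to identify $\pp{S}$ with $\Inv\Pol S$.

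For $\rho'' \subseteq \rho'$, I would take an arbitrary $\sigma\in\pp{S}$ with $\rho\subseteq\sigma$. By Corollary~\ref{thm:invpol}, we have $\sigma\in\Inv\Pol S$, so $\sigma$ is preserved by every $\boldsymbol{f}\in\Pol S$. Hence, for any $\boldsymbol{b}_1,\dots,\boldsymbol{b}_n\in\rho\subseteq\sigma$ and any $\boldsymbol{f}\in\Pol S$, the tuple $\boldsymbol{f}(\boldsymbol{b}_1,\dots,\boldsymbol{b}_n)$ lies in $\sigma$. This gives $\rho''\subseteq\sigma$, and intersecting over all admissible $\sigma$ yields $\rho''\subseteq\rho'$.

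For the reverse inclusion $\rho'\subseteq\rho''$, it suffices to exhibit $\rho''$ as a single member of the family being intersected. Clearly $\rho\subseteq\rho''$ by choosing $\boldsymbol{f}$ to be a unary projection $\boldsymbol{p}_{1,1}^k$. To show $\rho''\in\pp{S}$, by Corollary~\ref{thm:invpol} it is enough to verify $\rho''\in\Inv\Pol S$. Pick any $\boldsymbol{g}\in\Pol S$ of arity $m$ and any tuples $\boldsymbol{c}_1,\dots,\boldsymbol{c}_m\in\rho''$; by definition each $\boldsymbol{c}_i = \boldsymbol{f}_i(\boldsymbol{b}_{i,1},\dots,\boldsymbol{b}_{i,n_i})$ for some $\boldsymbol{f}_i\in\Pol S$ (of possibly different arities $n_i$) and $\boldsymbol{b}_{i,j}\in\rho$.

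The main (entirely routine) step is then to assemble a single $k$-operation that witnesses $\boldsymbol{g}(\boldsymbol{c}_1,\dots,\boldsymbol{c}_m)\in\rho''$. Setting $N=n_1+\dots+n_m$, I would use the clone axioms to extend each $\boldsymbol{f}_i$ to an $N$-ary $k$-operation $\widetilde{\boldsymbol{f}}_i\in\Pol S$ that depends only on the $i$-th block of $n_i$ variables (obtained by composing $\boldsymbol{f}_i$ with the appropriate $k$-projections). Defining
\[
\boldsymbol{h}\coloneqq \boldsymbol{g}\bigl(\widetilde{\boldsymbol{f}}_1,\dots,\widetilde{\boldsymbol{f}}_m\bigr),
\]
closure of $\Pol S$ under composition gives $\boldsymbol{h}\in\Pol S$, and a direct computation shows
\[
\boldsymbol{h}(\boldsymbol{b}_{1,1},\dots,\boldsymbol{b}_{1,n_1},\dots,\boldsymbol{b}_{m,1},\dots,\boldsymbol{b}_{m,n_m}) = \boldsymbol{g}(\boldsymbol{c}_1,\dots,\boldsymbol{c}_m).
\]
Since all the $\boldsymbol{b}_{i,j}$ lie in $\rho$ and $\boldsymbol{h}\in\Pol S$, the right-hand side is a member of $\rho''$. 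Thus $\rho''$ is preserved by every $\boldsymbol{g}\in\Pol S$, so $\rho''\in\Inv\Pol S=\pp{S}$, and being a $\sigma$ in the intersected family, it satisfies $\rho'\subseteq\rho''$. The only potential subtlety is the bookkeeping in the composition step; everything else reduces cleanly to the already established Galois identities.
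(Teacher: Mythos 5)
Your proof is correct and follows essentially the same route as the paper: both directions rest on Corollary~\ref{thm:invpol} (identifying $\pp{S}$ with $\Inv\Pol S$) and on showing that the right-hand side contains $\rho$ and is closed under every $\boldsymbol{g}\in\Pol S$ via composition inside the clone $\Pol S$. The only (immaterial) difference is the arity bookkeeping: you pad each $\boldsymbol{f}_i$ to a common arity $N=n_1+\dots+n_m$ by block concatenation, whereas the paper uses the finiteness of $\rho$ to rewrite each witness as an $|\rho|$-ary operation evaluated on a fixed enumeration of $\rho$ before composing.
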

\begin{proof}
By definition, the intersection on the left-hand side is the least relation in $\pp{S}$ that contains $\rho$. To show that the right-hand side also satisfies this property, we denote
\begin{equation*}
    \tau = \{\boldsymbol{f}(\boldsymbol{b}_1,\dots,\,\boldsymbol{b}_n) \mid \boldsymbol{b}_1,\dots,\,\boldsymbol{b}_n\in\rho,\,\boldsymbol{f}\in\Pol S\}
\end{equation*}
and notice the following.
\begin{itemize}
    \item Since $\Pol S$ contains projections, we immediately get that $\rho\subseteq\tau$.
    \item $\tau$ is preserved by every operation in $\Pol S$. To see this, let $\boldsymbol{f} \in \Pol S$ be an $r$-ary operation, and take $\boldsymbol{c}_1,\dots,\,\boldsymbol{c}_r \in \tau$. We need to show that $\boldsymbol{f}(\boldsymbol{c}_1,\dots,\,\boldsymbol{c}_r) \in \tau$.

    By the definition of $\tau$, for each $\boldsymbol{c}_i$, there exist some $q_i$-ary operation $\boldsymbol{g}_i \in \Pol S$ and tuples $\boldsymbol{b}_{i,1},\dots,\,\boldsymbol{b}_{i,q_i} \in \rho$ such that
    \begin{equation*}
        \boldsymbol{c}_i = \boldsymbol{g}_i(\boldsymbol{b}_{i,1},\dots,\,\boldsymbol{b}_{i,q_i})\,.
    \end{equation*}
    Since $\rho$ is finite, say $\rho = \{\boldsymbol{a}_1,\dots,\,\boldsymbol{a}_m\}$, for each $\boldsymbol{g}_i$ there exists an $m$-ary operation $\boldsymbol{g}_i' \in \Pol S$ such that
    \begin{equation*}
        \boldsymbol{c}_i = \boldsymbol{g}_i'(\boldsymbol{a}_1,\dots,\,\boldsymbol{a}_m)\,.
    \end{equation*}
    This follows from the fact that $\Pol S$ is closed under composition: the operation $\boldsymbol{g}_i'$ is simply the composition of $\boldsymbol{g}_i$ with $m$-ary projections onto appropriate coordinates.

    Thus, we have
    \begin{equation*}
        \boldsymbol{f}(\boldsymbol{c}_1,\dots,\,\boldsymbol{c}_r) = \boldsymbol{f}(\boldsymbol{g}_1'(\boldsymbol{a}_1,\dots,\,\boldsymbol{a}_m),\dots,\,\boldsymbol{g}_r'(\boldsymbol{a}_1,\dots,\,\boldsymbol{a}_m)) = \boldsymbol{h}(\boldsymbol{a}_1,\dots,\,\boldsymbol{a}_m)\,,
    \end{equation*}
    where $\boldsymbol{h} \coloneqq \boldsymbol{f}(\boldsymbol{g}_1',\dots,\,\boldsymbol{g}_r') \in \Pol S$ by closedness under composition. Thus, by the definition of $\tau$, we have $\boldsymbol{h}(\boldsymbol{a}_1,\dots,\,\boldsymbol{a}_m) \in \tau$. By Corollary~\ref{thm:invpol}, it follows that $\tau \in \Inv\Pol S = \pp{S}$.
    \item Assume there is $\tau'\in\pp{S}$ such that $\rho\subseteq\tau'$. By Corollary~\ref{thm:invpol}, we have $\pp{S} = \Inv\Pol S$, so applying any $\boldsymbol{f} \in \Pol S$ to elements of $\tau'$ yields another element of $\tau'$. In particular, for every $\boldsymbol{b}_1,\dots,\,\boldsymbol{b}_n \in \rho\subseteq\tau'$ we have $\boldsymbol{f}(\boldsymbol{b}_1, \dots, \boldsymbol{b}_n)\in\tau'$, so we conclude that $\tau \subseteq \tau'$.
\end{itemize}
Therefore, $\tau$ is the least relation in $\pp{S}$ containing $\rho$, as required.
\end{proof}
The following theorem is a direct generalization of Proposition 3.15 in~\cite{Borner2009}.
\begin{theorem}\label{thm:invspol}
Let $S\subseteq R_A^k$. Then
\begin{equation*}
    \Inv\sPol(S) = \qpp{S}\,.
\end{equation*}
\end{theorem}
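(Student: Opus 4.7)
The plan is to prove the two inclusions separately.

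For $\qpp{S} \subseteq \Inv\sPol S$: By Observation~\ref{thm:galoisproperty}, $S \subseteq \Inv\sPol S$. By Lemma~\ref{thm:invisrelclo}, since $\sPol S$ consists only of surjective $k$-operations, $\Inv\sPol S$ is itself a quantified relational clone. Hence the smallest qpp-closed set containing $S$, which is $\qpp{S}$, is contained in $\Inv\sPol S$.

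For the hard inclusion $\Inv\sPol S \subseteq \qpp{S}$, I would fix $\rho \in \Inv\sPol S$ of arity $m$ and size $n$ and mimic the proof of Lemma~\ref{thm:invpolstep2}, using exactly the same auxiliary $(kl^n)$-ary matrix $\theta$ and the same indices $r_i \in E_{kl^n}$, with the aim of defining $\rho$ via the formula
\begin{equation*}
    \rho(x_1,\dots,x_m) = \exists y_1\dots\exists y_{kl^n}\ U(y_1,\dots,y_{kl^n}) \land (x_1=y_{r_1})\land\dots\land(x_m=y_{r_m})
\end{equation*}
for a suitable $U \in \qpp{S}$. Given such a $U$, the $\subseteq$ direction follows from the preservation of $\rho$ by every $k$-operation in $\sPol S$ (our standing hypothesis on $\rho$), while the $\supseteq$ direction follows from the fact that the projection $k$-operations $\boldsymbol{p}_{n,j}^k$ are trivially surjective, provided $U$ contains their encodings $\widetilde{\boldsymbol{p}_{n,j}^k}$ for every $j \in E_n$.

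The entire argument therefore reduces to identifying such a $U$, which is my main obstacle. The naive candidate $U = \widetilde{\sPol S}_n$ does not work: since the composition of two surjective $k$-operations need not be surjective, $\widetilde{\sPol S}_n$ is not preserved by $\sPol S$ in general, so it need not even lie in $\Inv\sPol S$, let alone in $\qpp{S}$. The correct approach is to start from $\widetilde{\Pol S}_n \in \pp{S}$ (Lemma~\ref{thm:invpolstep1}) and use universal quantification to impose surjectivity block-by-block, producing a qpp-definable relation that still contains all projection encodings yet is small enough to carve out $\rho$. Expressing the surjectivity condition ``every value of $A$ occurs in each block of $l^n$ positions'' is the subtle step, since it is inherently disjunctive; the trick, generalizing the single-sorted construction of~\cite{Borner2009}, is to introduce $\forall$-quantified witness variables of each sort and to encode the addressing into the tuple using pp-constraints derived from $\widetilde{\Pol S}_n$ itself. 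Executing this construction rigorously and verifying both inclusions of the candidate formula for $\rho$ is the technical heart of the proof; the multi-sorted setting adds only sort-bookkeeping overhead, as the core ideas transfer unchanged from the single-sorted case.
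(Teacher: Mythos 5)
The easy inclusion $\qpp{S}\subseteq\Inv\sPol S$ is fine and matches the paper. The hard inclusion, however, is not actually proved. You correctly diagnose why the naive choice $U=\widetilde{\sPol S}_n$ fails and correctly guess that universal quantification must somehow enforce surjectivity, but you stop exactly where a proof is required: no concrete $U\in\qpp{S}$ is constructed, and neither inclusion of your candidate formula for $\rho$ is verified. Moreover, the route you sketch -- carving a ``surjective-only'' subrelation out of $\widetilde{\Pol S}_n$ by qpp-expressing that each block of $l^n$ coordinates hits every element of $A$ -- runs into precisely the obstacle you yourself name: that condition is an inherently disjunctive statement over positions, and you give no indication of how the proposed ``addressing'' constraints would make it qpp-definable. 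As written, the theorem has been reduced to an unproved (and possibly unprovable in that form) claim, so there is a genuine gap.

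The paper closes this gap by a different and simpler maneuver that never expresses surjectivity as a condition on operation-encodings. Given $\tau\in\Inv\sPol S$ of arity $n$, it pads $\tau$ with $l$ dummy variables of \emph{each} sort to obtain $\rho$, takes the least relation $\rho'\in\pp{S}$ containing $\rho$, which by Lemma~\ref{thm:leastrel} equals $\{\boldsymbol{f}(\boldsymbol{b}_1,\dots,\boldsymbol{b}_m)\mid \boldsymbol{b}_1,\dots,\boldsymbol{b}_m\in\rho,\ \boldsymbol{f}\in\Pol S\}$, and then universally quantifies all $lk$ padded variables to get a relation $\tau'\in\qpp{S}$ with $\tau\subseteq\tau'$. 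For the converse inclusion, given $\boldsymbol{a}\in\tau'$ one instantiates the universally quantified coordinates so that in each sort they enumerate all of $A$; the witnessing operation $\boldsymbol{f}\in\Pol S$ provided by Lemma~\ref{thm:leastrel} is then forced to be surjective in every sort, hence $\boldsymbol{f}\in\sPol S$, and $\boldsymbol{a}\in\tau$ because $\tau\in\Inv\sPol S$. In particular, the $\theta$/$\widetilde{\Pol S}_n$ machinery of Lemma~\ref{thm:invpolstep2} plays no role here; the lesson for repairing your plan is to attach the universally quantified ``surjectivity witnesses'' to the target relation itself rather than trying to cut the set of operation-encodings down to the surjective ones.
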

\begin{proof}
The inclusion $\supseteq$ is a direct corollary of Lemma~\ref{thm:invisrelclo}. It remains to prove the converse inclusion.

Consider an $n$-ary relation $\tau\in \Inv\sPol(S)$ whose $i$-th variable has a sort $s_i\in E_k$. Define
\begin{equation*}
    \rho(x_1^{s_1},\dots,\,x_n^{s_n},\,y_1^1,\dots,\,y_l^1,\dots,\,y_1^k,\dots,\,y_l^k) \coloneqq \tau(x_1^{s_1},\dots,\,x_n^{s_n})\,.
\end{equation*}
As it is not immediately clear that $\tau,\,\rho \in \qpp{S}$, we proceed by defining $\rho'$ as the least relation in $\pp{S}$ containing $\rho$. Equivalently, we put
\begin{equation*}
    \rho' = \bigcap_{\substack{\sigma\,\in\,\pp{S}\\ \rho\,\subseteq\,\sigma}} \sigma \stackrel{\ref{thm:leastrel}}{=} \{\boldsymbol{f}(\boldsymbol{b}_1,\dots,\,\boldsymbol{b}_m) \mid \boldsymbol{b}_1,\dots,\,\boldsymbol{b}_m\in\rho,\,\boldsymbol{f}\in\Pol S\}\,.
\end{equation*}
Finally, we define
\begin{multline*}
    \tau'(x_1^{s_1},\dots,\,x_n^{s_n}) = \forall y_1^1,\dots,\forall y_l^1\dots\forall y_1^k\dots\forall y_l^k\\ \rho'(x_1^{s_1},\dots,\,x_n^{s_n},\,y_1^1,\dots,\,y_l^1,\dots,\,y_1^k,\dots,\,y_l^k)\,.
\end{multline*}
Notice that $\tau'$ is qpp-definable from relations in $S$ and that $\tau\subseteq \tau'$. To finish the proof of $\tau\in\qpp{S}$, we prove the converse inclusion, i.e. $\tau'\subseteq \tau$. Let us take $\boldsymbol{a}\in\tau'$.

By the definition of $\tau'$, the tuple $(\boldsymbol{a}^{(1)},\dots,\,\boldsymbol{a}^{(n)},\,b_1^1,\dots,\,b_l^k)$ is in $\rho'$ for all $b_i^j\in A$, where $i\in E_l$ and $j\in E_k$. In particular, we can choose the constants $b_i^j$ such that
\begin{equation*}
    \{b_1^1,\dots,\,b_l^1\} = \{b_1^2,\dots,\,b_l^2\} = \dots = \{b_1^k,\dots,\,b_l^k\} = A\,.
\end{equation*}
By the definition of $\rho'$ and by Lemma~\ref{thm:leastrel}, there is $m$-ary $\boldsymbol{f}\in\Pol\qpp{S}$, tuples $\boldsymbol{a}_1,\dots,\,\boldsymbol{a}_m\in\tau$, and constants $b_{i,h}^j\in A$ such that
\begin{equation*}
    \begin{pmatrix}
        \boldsymbol{a}^{(1)}\\
        \vdots\\
        \boldsymbol{a}^{(n)}\\
        b_1^1\\
        \vdots\\
        b_l^1\\
        \vdots\\
        b_1^k\\
        \vdots\\
        b_l^k
    \end{pmatrix} = \begin{pmatrix}
        \boldsymbol{f}^{(s_1)}(\boldsymbol{a}_1^{(1)},\dots,\,\boldsymbol{a}_m^{(1)})\\
        \vdots\\
        \boldsymbol{f}^{(s_n)}(\boldsymbol{a}_1^{(n)},\dots,\,\boldsymbol{a}_m^{(n)})\\
        \boldsymbol{f}^{(1)}(b_{1,1}^1,\dots,\,b_{1,m}^1)\\
        \vdots\\
        \boldsymbol{f}^{(1)}(b_{l,1}^1,\dots,\,b_{l,m}^1)\\
        \vdots\\
        \boldsymbol{f}^{(k)}(b_{1,1}^k,\dots,\,b_{1,m}^k)\\
        \vdots\\
        \boldsymbol{f}^{(k)}(b_{l,1}^k,\dots,\,b_{l,m}^k)
    \end{pmatrix}\,.
\end{equation*}
Notice that by the choice of $b_i^j$, $\boldsymbol{f}$ is surjective, hence $\boldsymbol{f}\in\sPol S$. And since $\tau\in\Inv\sPol S$, we get
\begin{equation*}
    \boldsymbol{a} = \begin{pmatrix}
        \boldsymbol{f}^{(s_1)}(\boldsymbol{a}_1^{(1)},\dots,\,\boldsymbol{a}_m^{(1)})\\
        \vdots\\
        \boldsymbol{f}^{(s_n)}(\boldsymbol{a}_1^{(n)},\dots,\,\boldsymbol{a}_m^{(n)})
    \end{pmatrix} \in\tau\,.
\end{equation*}
Thus, we conclude that $\tau = \tau' \in \qpp{S}$, completing the proof.
\end{proof}
Finally, combining Observation~\ref{thm:galoisproperty}, Theorem~\ref{thm:polinv}, and Theorem~\ref{thm:invspol}, we conclude the proof of Theorem~\ref{thm:Galois}.
\section{Deferred proofs}\label{sec:defproofs}
This section presents all proofs that were previously omitted due to their technical nature. Each proof is preceded by a restatement of the corresponding result for clarity.
\subsection{Construction of canonical relations}\label{sec:defproofscanon}
Recall that $k\in\N$ is fixed.
\begin{THMgaussrearrangementTHM}
For every relation $\rho'\in\KR^k$, there are constants $m,\,n,\,l\in\N_0$, and $p_i,\,q_j,\,r_h\in E_k$, $a_{i,j},\,b_i,\,c_h\in\{0,\,1\}$ for all $i\in E_m$, $j\in E_n$, $h\in E_l$ such that the relation $\rho$ defined by
\begin{equation*}
    \rho\!\left(\begin{array}{@{\,}l@{\,}}
        x_1^{p_1},\dots,\,x_m^{p_m}, \\
        y_1^{q_1},\dots,\,y_n^{q_n}, \\
        z_1^{r_1},\dots,\,z_l^{r_l}
    \end{array}\right) = \disj{\begin{array}{r@{\mskip\medmuskip}c@{\mskip\medmuskip}l}
        x_1^{p_1} & = & a_{1,1}y_1^{q_1} + \dots + a_{1,n}y_n^{q_n} + b_1 \\
        &\vdots&\\
        x_m^{p_m} & = & a_{m,1}y_1^{q_1} + \dots + a_{m,n}y_n^{q_n} + b_m\\
        z_1^{r_1} & = & c_1 \\
        & \vdots & \\
        z_l^{r_l} & = & c_l
    \end{array}}
\end{equation*}
is similar to $\rho'$.
\end{THMgaussrearrangementTHM}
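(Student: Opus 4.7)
The plan is to recognise the complement of $\rho'$ inside $\{0,1\}^N$ as an affine subspace over $\mathbb{F}_2$ and then put its defining system into reduced row echelon form by Gauss--Jordan elimination; the disjunction of the resulting ``negated'' pivot equations is exactly the Gauss form required.

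Fix a representation $\rho'(x_1^{s_1},\dots,x_N^{s_N}) = \lambda_1\lor\cdots\lor\lambda_M$ in which each $\lambda_i$ is the linear equation $\ell_i(\boldsymbol{x}) = d_i$, and set
\[
V = \{\boldsymbol{a}\in\{0,1\}^N : \ell_i(\boldsymbol{a}) = d_i+1 \text{ for every } i\in E_M\}.
\]
Since on $\{0,1\}$ the negation of ``$\ell_i(\boldsymbol{a})=d_i$'' is exactly ``$\ell_i(\boldsymbol{a})=d_i+1$'', the set $V$ is precisely the complement of $\rho'$ inside $\{0,1\}^N$; as an intersection of affine hyperplanes it is either empty or an affine subspace. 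In the main case $\emptyset \subsetneq V \subsetneq \{0,1\}^N$, apply Gauss--Jordan elimination to the system $\{\ell_i(\boldsymbol{x}) = d_i+1\}_{i\in E_M}$ over $\mathbb{F}_2$ and discard redundant rows, producing an equivalent system in reduced row echelon form
\[
x_i = L_i(\boldsymbol{x}_J) + f_i \quad (i \in I),
\]
where $I \subseteq E_N$ is the set of pivot coordinates, $J = E_N \setminus I$ consists of the free ones, each $L_i$ is an $\mathbb{F}_2$-linear form in the variables indexed by $J$, and $f_i \in \{0,1\}$. Split $I = I_1 \cup I_2$ according to whether $L_i$ has at least one nonzero coefficient or is identically zero. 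Complementing each equation (i.e.\ replacing $f_i$ by $f_i+1$) then yields
\[
\rho' \,=\, \bigvee_{i\in I_1}\bigl(x_i = L_i(\boldsymbol{x}_J) + f_i+1\bigr) \;\lor\; \bigvee_{i\in I_2}\bigl(x_i = f_i+1\bigr),
\]
which is exactly the desired Gauss form: taking $m = |I_1|$, $n = |J|$, $l = |I_2|$, assigning the $x$-variables of $\rho$ to the coordinates indexed by $I_1$, the $y$-variables to $J$, and the $z$-variables to $I_2$, with sorts $p_i, q_j, r_h$ inherited from $s_1,\dots,s_N$, produces a $\rho$ similar to $\rho'$. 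Importantly, because Gauss--Jordan elimination only takes $\mathbb{F}_2$-linear combinations of the equations (not of the variables), the sort labels attached to the coordinates are preserved throughout the reduction.

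The two degenerate cases are handled directly. If $V = \{0,1\}^N$, then $\rho'$ is identically false and the claim holds with $m = l = 0$, $n = N$ (empty disjunction); the case $V = \emptyset$ (i.e.\ $\rho'$ identically true) can only arise when the chosen representation contains a tautologous disjunct and therefore makes $\rho'$ have only dummy coordinates, which is excluded by a preparatory reduction to a non-degenerate representation. The main obstacle is largely notational: ensuring that the Gauss--Jordan pivot-choice organises every coordinate of $\rho'$ (together with its sort) into exactly one of the three groups $I_1, J, I_2$, and that the constants $a_{i,j}, b_i, c_h$ in the target are read off correctly from the reduced echelon form; once the partition is fixed, the remaining reassembly is mechanical.
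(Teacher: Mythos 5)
Your proposal follows essentially the same route as the paper's proof: by De~Morgan the complement of $\rho'$ is an affine subspace of $\{0,1\}^N$ over $\mathbb{F}_2$, Gauss--Jordan elimination puts its defining system into reduced row echelon form, and negating the (pivot) equations and regrouping the coordinates into pivot-with-nonzero-linear-part, free, and pivot-constant blocks gives exactly the required shape; your main case is carried out correctly and in more detail than the paper, which compresses all of this into a single sentence.

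The one weak spot is your treatment of the case $V=\emptyset$, i.e.\ $\rho'$ the full relation. Your claim that this can only arise when the chosen representation contains a tautologous disjunct is false: $(x=0)\lor(x=1)$ is a disjunction of two non-tautologous equations whose negated system is inconsistent. Moreover, no ``preparatory reduction'' can repair this case, because the full relation of positive arity is genuinely not expressible in the stated Gauss form: any instance with $m+l\geq 1$ excludes at least one tuple (choose the $y$-values freely and set each $x_i$, $z_h$ to violate its equation), and an instance with $m=l=0$ is empty. So this degenerate case is a defect of the lemma as literally stated rather than something your elimination argument could absorb; the paper's own one-line proof is equally silent about it, and it is harmless downstream because the full relation is pp-definable from equality and hence lies in $\qpp{\emptyset}$, so it never matters in the proof of Theorem~\ref{thm:CR}. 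Your write-up should either exclude the full relation explicitly (observing it lies in every quantified relational clone) or flag the statement's implicit restriction, instead of asserting that the case cannot occur.
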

\begin{proof}
The result follows directly from De~Morgan's laws and Gauss-Jordan elimination. Given a general form of $\rho'$, we observe that
\begin{align*}
    \rho'(u_1,\dots,\,u_\mu) &= \bigvee_{i\,=\,1}^{\nu} (\alpha_{i,1}u_1 + \alpha_{i,2}u_2 + \dots + \alpha_{i,\mu}u_\mu = \beta_i) = \\
    &= \lnot\bigwedge_{i\,=\,1}^{\nu} (\alpha_{i,1}u_1 + \alpha_{i,2}u_2 + \dots + \alpha_{i,\mu}u_\mu = \beta_i + 1)\,.
\end{align*}
Thus, every relation in $\KR^k$ is a~complement of an~affine space. The desired relation $\rho$ is obtained by performing Gauss-Jordan elimination on this system, followed by a~suitable rearrangement of variables.
\end{proof}
Proving the lemmata~\ref{thm:decomposition}, \ref{thm:canon45}, \ref{thm:canon2}, \ref{thm:canon3}, \ref{thm:disjeq01}, and \ref{thm:canon12} reduces to establishing that $\qpp{S} = \qpp{S'}$ for certain sets $S,\, S' \subseteq R_{\{0,1\}}^k$. The common strategy in all these proofs is to construct each relation in $S$ as a qpp formula using relations from $S'$ and, conversely, to express each relation in $S'$ using relations from $S$.

To keep the presentation concise, we do not explicitly write out formulas whenever the resulting relation is obtained by a direct application of elementary operations (eo1)--(eo5); see Definition~\ref{def:eo} and Remark~\ref{rem:applyingeo} for the details.
\begin{THMdecompositionTHM}
Suppose
\begin{align*}
    \rho\!\left(\begin{array}{@{\,}l@{\,}}
        x_1^{p_1},\dots,\,x_m^{p_m}, \\
        y_1^{q_1},\dots,\,y_n^{q_n}, \\
        z_1^{r_1},\dots,\,z_l^{r_l}
    \end{array}\right) &= \disj{\begin{array}{r@{\mskip\medmuskip}c@{\mskip\medmuskip}l}
        x_1^{p_1} & = & a_{1,1}y_1^{q_1} + \dots + a_{1,n}y_n^{q_n} + b_1 \\
        &\vdots&\\
        x_m^{p_m} & = & a_{m,1}y_1^{q_1} + \dots + a_{m,n}y_n^{q_n} + b_m\\
        z_1^{r_1} & = & c_1 \\
        & \vdots & \\
        z_l^{r_l} & = & c_l
    \end{array}}\,,\\
    \sigma\!\left(\begin{array}{@{\,}l@{\,}}
        u_{1}^{p_1},\dots,\,u_{m}^{p_m}, \\
        v_{1}^{p_1},\dots,\,v_{m}^{p_m}, \\
        z_1^{r_1},\dots,\,z_l^{r_l}
    \end{array}\right)&= \disj{\begin{array}{r@{\mskip\medmuskip}c@{\mskip\medmuskip}l}
        u_{1}^{p_1} & = & v_{1}^{p_1} \\
        &\vdots&\\
        u_{m}^{p_m} & = & v_{m}^{p_m} \\
         z_1^{r_1} & = & c_1 \\
        & \vdots & \\
        z_l^{r_l} & = & c_l
    \end{array}}\,,\\
    \lambda_1(x_1^{p_1},\,y_1^{q_1},\dots,\,y_n^{q_n}) &= (x_1^{p_1} = a_{1,1}y_1^{q_1} + \dots + a_{1,n}y_n^{q_n} + b_1)\,,\\
    &\hspace*{0.5em}\vdots\\
    \lambda_m(x_m^{p_m},\,y_1^{q_1},\dots,\,y_n^{q_n}) &= (x_m^{p_m} = a_{m,1}y_1^{q_1} + \dots + a_{m,n}y_n^{q_n} + b_m)\,,
\end{align*}
where $m,\,n,\,l\in\N_0$, and $p_i,\,q_j,\,r_h\in E_k$, $a_{i,j},\,b_i,\,c_h\in\{0,\,1\}$ for all $i\in E_m$, $j\in E_n$, $h\in E_l$. Then
\begin{enumerate}
    \item $\sigma\in\pp{\rho}$,
    \item $\qpp{\rho} = \qpp{\sigma,\,\lambda_1,\dots,\,\lambda_m}$.
\end{enumerate}
\end{THMdecompositionTHM}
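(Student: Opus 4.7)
My plan is to verify three explicit formulas, each routine to check once written down. For the inclusion $\qpp{\rho} \subseteq \qpp{\sigma,\lambda_1,\dots,\lambda_m}$ of part (2), I will use the identity
\begin{equation*}
\rho(x,y,z) = \exists x_1' \cdots \exists x_m'\, \lambda_1(x_1',y)\wedge\dots\wedge\lambda_m(x_m',y)\wedge \sigma(x_1,\dots,x_m,x_1',\dots,x_m',z)\,;
\end{equation*}
here $\lambda_i(x_i',y)$ pins $x_i'$ to $\sum_j a_{i,j}y_j + b_i$, and the disjunct $x_i = x_i'$ of $\sigma$ then coincides with $\lambda_i(x_i,y)$, while the $z_h = c_h$ disjuncts of $\rho$ and $\sigma$ agree directly. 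For the opposite inclusion I need $\sigma,\lambda_i \in \qpp{\rho}$; the membership $\lambda_i \in \qpp{\rho}$ will follow from the dual identity
\begin{equation*}
\lambda_i(x_i,y) = \forall x_1 \cdots \forall x_{i-1}\forall x_{i+1}\cdots \forall x_m\forall z_1\cdots\forall z_l\, \rho(x,y,z)\,,
\end{equation*}
since if $\lambda_i$ fails at $(x_i,y)$ one can set $x_j = \sum_{j'}a_{j,j'}y_{j'}+b_j+1$ for $j \neq i$ and $z_h = c_h + 1$ to falsify every surviving disjunct of $\rho$.

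The main step, which will give part (1) together with the remaining ingredient $\sigma \in \qpp{\rho}$ for part (2), is the identity
\begin{equation*}
\sigma(u,v,z) = \exists y_1 \cdots \exists y_n \bigwedge_{\epsilon \in \{0,1\}^m} \rho(w^\epsilon_1,\dots,w^\epsilon_m,y_1,\dots,y_n,z_1,\dots,z_l)\,,
\end{equation*}
where $w^\epsilon_i$ equals $u_i$ if $\epsilon_i = 0$ and $v_i$ otherwise. The naive formula using one copy of $\rho$ for $u$ and one for $v$ cannot work: the two satisfied disjuncts need not share an index, and only the conjunction over all $2^m$ corners $w^\epsilon$ forces them to agree. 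To verify the identity I will split on whether some $z_h = c_h$: if so, both sides are trivially true; otherwise, for any fixed $y$ the set $\{x \in \{0,1\}^m : \rho(x,y,z)\}$ equals $\bigcup_i \{x : x_i = f_i(y)\}$ with $f_i(y) = \sum_j a_{i,j}y_j + b_i$, whose complement in $\{0,1\}^m$ is the single point $(1-f_1(y),\dots,1-f_m(y))$. Assuming (as the Gauss-Jordan convention of Lemma \ref{thm:gaussrearrangement} ensures) that each $\lambda_i$ is non-trivial, so that each $f_i$ surjects onto $\{0,1\}$, the argument then splits again: if some $u_{i_*} = v_{i_*}$, every corner has $w^{\epsilon}_{i_*} = u_{i_*}$, and choosing $y$ with $f_{i_*}(y) = u_{i_*}$ places every $w^\epsilon$ on the $i_*$-th hyperplane; if instead $u_i \neq v_i$ for every $i$, then $\{w^\epsilon\}$ exhausts $\{0,1\}^m$ and for any $y$ the missing complement point is some $w^\epsilon$, so the conjunction fails.

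The main obstacle will be this last identity: its ``only if'' direction hinges on the hyperplane-covering analysis above and crucially uses the non-triviality of each $\lambda_i$, without which $\sigma$ need not even be pp-definable from $\rho$ (as simple two-variable examples show).
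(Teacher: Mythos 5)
Your proposal is correct and follows essentially the same route as the paper: part (1) is proved there with exactly your $2^m$-corner formula (the conjunction of $\rho$ over all choices $w^\epsilon_i\in\{u_i,v_i\}$ with common existentially quantified $y$'s), and part (2) is obtained, as in your plan, by universally quantifying the remaining $x$- and $z$-variables of $\rho$ to get each $\lambda_i$ and by recovering $\rho$ as a composition of $\sigma$ with $\lambda_1,\dots,\lambda_m$, which is your first identity. The one point where you are more careful than the paper is the non-triviality caveat: the statement as printed allows rows with $a_{i,1}=\dots=a_{i,n}=0$, and the paper's proof of (1) silently uses surjectivity of the affine form when it ``chooses witnesses'' with $x_{i,1}^{p_i}=x_{i,2}^{p_i}=a_{i,1}y_1^{q_1}+\dots+a_{i,n}y_n^{q_n}+b_i$. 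Your claim that the conclusion genuinely fails for such degenerate rows is right: for $\rho=(x=0\lor z=0)$ and $\sigma=(u=v\lor z=0)$, the surjective operation $(s,t)\mapsto s\wedge\neg t$ (used in every sort) preserves $\rho$ but maps the $\sigma$-tuples $(1,1,1)$ and $(1,0,0)$ to $(0,1,1)\notin\sigma$, so $\sigma\notin\pp{\rho}$ and $\sigma\notin\qpp{\rho}$. So your reading -- that constant rows are to be placed among the $z$-variables, as the Gauss--Jordan normal form of Lemma~\ref{thm:gaussrearrangement} is intended to arrange -- is the correct one, and under that convention your verification of all three identities is sound.
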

\begin{proof}
For statement (1), we verify the following pp formula:
\begin{multline*}
    \sigma\!\left(\begin{array}{@{\,}l@{\,}}
        x_{1,1}^{p_1},\dots,\,x_{m,1}^{p_m}, \\
        x_{1,2}^{p_1},\dots,\,x_{m,2}^{p_m}, \\
        z_1^{r_1},\dots,\,z_l^{r_l}
    \end{array}\right) =\\= \exists y_1^{q_1},\dots,\,\exists y_n^{q_n} \,\bigwedge_{(d_1,\dots,\,d_m)\,\in\,\{1,\,2\}^m} \rho\!\left(\begin{array}{@{\,}l@{\,}}
        x_{1,d_1}^{p_1},\dots,\,x_{m,d_m}^{p_m}, \\
        y_1^{q_1},\dots,\,y_n^{q_n}, \\
        z_1^{r_1},\dots,\,z_l^{r_l}
    \end{array}\right)\,.
\end{multline*}
The equality is immediate when $z_h^{r_h} = c_h$ for some $h\in E_l$. Otherwise, if $z_h^{r_h} \neq c_h$ for all $h\in E_l$, we distinguish two cases:
\begin{itemize}
    \item If $x_{i,1}^{p_i}$ and $x_{i,2}^{p_i}$ take the same value for some $i\in E_m$, we can choose witnesses such that
    \begin{equation*}
        x_{i,1}^{p_i} = x_{i,2}^{p_i} = a_{i,1}y_1^{q_1} + \dots + a_{i,n}y_n^{q_n} + b_i\,.
    \end{equation*}
    \item If $x_{i,1}^{p_i}$ and $x_{i,2}^{p_i}$ take different values for all $i\in E_m$, then any choice of witnesses satisfies exactly one of the following equations for each $i\in E_m$:
    \begin{align*}
        x_{i,1}^{p_i} &= a_{i,1}y_1^{q_1} + \dots + a_{i,n}y_n^{q_n} + b_i\,,\\
        x_{i,2}^{p_i} &= a_{i,1}y_1^{q_1} + \dots + a_{i,n}y_n^{q_n} + b_i\,,
    \end{align*}
    This means that there exists a tuple $(e_1,\dots,\,e_m)\in\{1,\,2\}^m$ such that the term
    \begin{equation*}
        \rho\!\left(\begin{array}{@{\,}l@{\,}}
            x_{1,e_1}^{p_1},\dots,\,x_{m,e_m}^{p_m}, \\
            y_1^{q_1},\dots,\,y_n^{q_n}, \\
            z_1^{r_1},\dots,\,z_l^{r_l}
        \end{array}\right)
    \end{equation*}
    is not witnessed.
\end{itemize}
This concludes the proof of statement (1). To prove statement (2), we observe:
\begin{itemize}
    \item Each $\lambda_j$ belongs to $\qpp{\rho}$, as it can be obtained by universally quantifying all variables $z_h^{r_h}$ for $h\in E_l$ and all variables $x_i^{p_i}$ for $i\in E_m\setminus\{j\}$ in $\rho$.
    \item $\rho\in\qpp{\sigma,\,\lambda_1,\dots,\,\lambda_m}$ as $\rho$ is obtained by a sequence of compositions (eo4) of $\sigma$ with $\lambda_1,\dots,\,\lambda_m$.\qedhere
\end{itemize}
\end{proof}
\begin{THMcanon45THM}
Let
\begin{multline*}
    \rho(x_1^1,\dots,\,x_{l_1}^1,\,x_1^2,\dots,\,x_{l_2}^2,\dots,\,x_1^k,\dots,\,x_{l_k}^k) =\\= (x_1^1 + \dots + x_{l_1}^1 + x_{1}^2 + \dots + x_{l_2}^2+\dots+x_{1}^k + \dots + x_{l_k}^k = b)\,,
\end{multline*}
where $l_1,\dots,\,l_k\in\N_0$ and $l_1 + \dots + l_k \geq 3$. We define
\begin{align*}
    I &= \{i\in E_k\mid l_i\neq 0\}\,, \qquad \text{(sorts with at least one variable)}\\
    O &= \{i\in E_k\mid l_i\text{ is odd}\}\,. \quad \text{(sorts with an odd number of variables)}
\end{align*}
Then:
\begin{enumerate}
    \item If $O = \emptyset$ and $b = 0$, then
    \begin{equation*}
        \qpp{\rho} = \qpp{\{x^i+y^i = u^j + v^j \mid i,\,j\in I\}}\,.
    \end{equation*}
    \item If $O = \emptyset$ and $b=1$, then
    \begin{equation*}
        \qpp{\rho} = \qpp{\{(x^i+y^i = u^j + v^j),\,(x^i + y^i = 1) \mid i,\,j\in I\}}\,.
    \end{equation*}
    \item If $O \neq \emptyset$, letting $\{p_1,\dots,\,p_m\} = O$, we have
    \begin{equation*}
        \qpp{\rho} = \qpp{\{x^{p_1} +\dots + x^{p_m} =  b\}\cup\{x^i+y^i = u^j + v^j \mid i,\,j\in I\}}\,.
    \end{equation*}
\end{enumerate}
\end{THMcanon45THM}
\begin{proof}
The inclusion ``$\supseteq$'' is established in each case by constructing the necessary relations through identification of variables (eo3) and removal of dummy variables (eo1), sometimes preceded by a suitable sequence of compositions (eo4) of $\rho$ with itself.

For the inclusion ``$\subseteq$'', the required relations are obtained directly through a sequence of compositions (eo4).
\end{proof}
\begin{THMcanon2THM}
Let $p_1,\dots,\,p_m,\,r_1,\dots,\,r_l\in E_k$ and $c_1,\dots,\,c_l\in\{0,\,1\}$. Then
\begin{equation*}
    \qpp{\disj{\begin{array}{r@{\mskip\medmuskip}c@{\mskip\medmuskip}l}
        u_{1}^{p_1} & = & v_{1}^{p_1} \\
        &\vdots&\\
        u_{m}^{p_m} & = & v_{m}^{p_m} \\
        z_1^{r_1} & = & c_1 \\
        z_2^{r_2} & = & c_2 \\
        & \vdots & \\
        z_l^{r_l} & = & c_l
    \end{array}}} = \qpp{\disj{\begin{array}{r@{\mskip\medmuskip}c@{\mskip\medmuskip}l}
        u_{1}^{p_1} & = & v_{1}^{p_1} \\
        &\vdots&\\
        u_{m}^{p_m} & = & v_{m}^{p_m} \\
        z_2^{r_2} & = & c_2 \\
        & \vdots & \\
        z_{l}^{r_{l}} & = & c_{l}
    \end{array}},\,\disj{\begin{array}{r@{\mskip\medmuskip}c@{\mskip\medmuskip}l}
        u_{1}^{p_1} & = & v_{1}^{p_1} \\
        z_1^{r_1} & = & c_1
    \end{array}}}\,.
\end{equation*}
\end{THMcanon2THM}
\begin{proof}
To establish the inclusion ``$\supseteq$'', we construct the required relations by universal quantification of some variables (eo5). The relation required to establish inclusion ``$\subseteq$'' is constructed by composition (eo4).
\end{proof}
\begin{THMcanon3THM}
Let $p_1,\dots,\,p_m\in E_k$. Then
\begin{equation*}
    \qpp{\disj{\begin{array}{r@{\mskip\medmuskip}c@{\mskip\medmuskip}l}
        u_{1}^{p_1} & = & v_{1}^{p_1} \\
        u_{2}^{p_2} & = & v_{2}^{p_2} \\
        &\vdots&\\
        u_{m}^{p_m} & = & v_{m}^{p_m} 
    \end{array}}} = \qpp{\disj{\begin{array}{r@{\mskip\medmuskip}c@{\mskip\medmuskip}l}
        u_{2}^{p_2} & = & v_{2}^{p_2} \\
        &\vdots&\\
        u_{m}^{p_m} & = & v_{m}^{p_m}
    \end{array}},\,\disj{\begin{array}{r@{\mskip\medmuskip}c@{\mskip\medmuskip}l}
        u_{1}^{p_1} & = & v_{1}^{p_1} \\
        u_{2}^{p_2} & = & v_{2}^{p_2} 
    \end{array}}}\,.
\end{equation*}
Furthermore, if $i\in E_k$, then
\begin{equation*}
    \qpp{x^i = y^i\lor u^i = v^i} = \qpp{x^i = y^i \lor y^i = z^i}\,.
\end{equation*}
\end{THMcanon3THM}
\begin{proof}
The proof of the first equality is analogous to that of Lemma~\ref{thm:canon2}. The~``$\subseteq$'' direction of the second equality follows from Lemma~\ref{thm:decomposition}, applied to the relation $\rho(x_1^i,\,x_2^i,\,y^i) = (x_1^i = y^i \lor x_2^i = y^i)$, while the converse inclusion is immediate.
\end{proof}
\begin{THMdisjeq01THM}
Let $i,\,r_1,\dots,\,r_l\in E_k$ and $c_1,\dots,\,c_l\in\{0,\,1\}$. Then
\begin{equation*}
    \qpp{\disj{\begin{array}{r@{\mskip\medmuskip}c@{\mskip\medmuskip}l}
        x^i & = & 0 \\
        y^i & = & 1 \\
        z_1^{r_1} & = & c_1 \\
        z_2^{r_2} & = & c_2 \\
        & \vdots & \\
        z_l^{r_l} & = & c_l
    \end{array}}} = \qpp{\disj{\begin{array}{r@{\mskip\medmuskip}c@{\mskip\medmuskip}l}
        x^i & = & 0 \\
        y^i & = & 1 \\
        z_2^{r_2} & = & c_2 \\
        & \vdots & \\
        z_l^{r_l} & = & c_l
    \end{array}},\,\disj{\begin{array}{r@{\mskip\medmuskip}c@{\mskip\medmuskip}l}
        x^i & = & 0 \\
        y^i & = & 1 \\
        z_1^{r_1} & = & c_1 \\
    \end{array}}}\,.
\end{equation*}
\end{THMdisjeq01THM}
\begin{proof}
The proof is analogous to that of Lemma~\ref{thm:canon2}.
\end{proof}
\begin{THMcanon12THM}
Let $i,\,j\in E_k$ and $b\in\{0,\,1\}$. Then
\begin{equation*}
    \qpp{\disj{\begin{array}{r@{\mskip\medmuskip}c@{\mskip\medmuskip}l}
        x^i & = & 0 \\
        y^i & = & 1 \\
        z^j & = & b \\
    \end{array}}} = \qpp{\disj{\begin{array}{r@{\mskip\medmuskip}c@{\mskip\medmuskip}l}
        x^i & = & y^i \\
        z^j & = & b \\
    \end{array}}, \disj{\begin{array}{r@{\mskip\medmuskip}c@{\mskip\medmuskip}l}
        x^i & = & 0 \\
        y^i & = & 1 \\
    \end{array}}}\,.
\end{equation*}
\end{THMcanon12THM}
\begin{proof}
The relation required for the inclusion~``$\subseteq$'' is obtained by composition (eo4). For the converse inclusion, we note that one of the relations is obtained via universal quantification (eo5), while the other is obtained by conjunction (eo6) as:
\begin{equation*}
    \disj{\begin{array}{r@{\mskip\medmuskip}c@{\mskip\medmuskip}l}
        x^i + y^i & = & 0 \\
        z^j & = & b \\
    \end{array}} = \disj{\begin{array}{r@{\mskip\medmuskip}c@{\mskip\medmuskip}l}
        x^i & = & 0 \\
        y^i & = & 1 \\
        z^j & = & b \\
    \end{array}} \land \disj{\begin{array}{r@{\mskip\medmuskip}c@{\mskip\medmuskip}l}
        y^i & = & 0 \\
        x^i & = & 1 \\
        z^j & = & b \\
    \end{array}}\,.
\end{equation*}
\end{proof}

Since all the necessary statements for Theorem~\ref{thm:CR} have been established, it only remains to prove Theorem~\ref{thm:propCR}. The key idea is that this theorem follows almost directly from the classification of quantified relational clones generated by single canonical relations—that is, the sets $\qpp{\rho}$ for every $\rho\in\CR^k$. We present this classification in Table~\ref{tab:qppcanon} and formally establish it in Theorem~\ref{thm:qppcanon}.

For conciseness, the elements of each quantified relational clone are listed up to permutations of variables and the presence of dummy variables. Additionally, we omit explicit mention of ``trivial'' relations, meaning those in $\qpp{\emptyset}$. In summary, for each $\rho\in\CR^k$, we describe only the set $S_\rho$ such that
\begin{equation*} 
    \qpp{\rho}\cap\KR^k = \eo{S_\rho}{2}\cup\qpp{\emptyset}\,. \end{equation*}
This description is sufficient to fully determine $\qpp{\rho}$.

\newcolumntype{C}{!{\vrule}c!{\vrule}}
\newcolumntype{R}{c!{\vrule}}
\NiceMatrixOptions{cell-space-limits = 4pt}
\begin{table}[!htb]
    \centering
    \resizebox{\textwidth}{!}{
    \begin{NiceTabular}{CRR}\hline
        Type & $\rho$ & $S_\rho$ \\ \hline\hline
        (c1) & $x^i = 0 \lor y^i = 1$ & $\{x^i = 0 \lor y^i = 1\}$ \\ \hline\hline
        \Block{2-1}{(c2)} & \begin{tabular}{c}$x^i = y^i \lor u^j = b$\\ where $i\neq j$\end{tabular} & $\left\{ \disj{\begin{array}{r@{\mskip\medmuskip}c@{\mskip\medmuskip}l}
        x^i & = & y^i \\
        z_1^j & = & b \\
        &\vdots &\\
        z_n^j & = & b
    \end{array}},\, \disj{\begin{array}{r@{\mskip\medmuskip}c@{\mskip\medmuskip}l}
        z_1^j & = & b \\
        &\vdots &\\
        z_n^j & = & b
    \end{array}}~\middle|~ n\in\N \right\}$ \\ \cline{2-3}
         & $x^i = y^i \lor u^i = b$ & $\left\{ \disj{\begin{array}{r@{\mskip\medmuskip}c@{\mskip\medmuskip}l}
        x^i & = & y^i \\
        z_1^i & = & b \\
        &\vdots &\\
        z_n^i & = & b
    \end{array}},\, \disj{\begin{array}{r@{\mskip\medmuskip}c@{\mskip\medmuskip}l}
        z_1^i & = & b \\
        z_2^i & = & b\\
        &\vdots &\\
        z_n^i & = & b
    \end{array}},\, \disj{\begin{array}{r@{\mskip\medmuskip}c@{\mskip\medmuskip}l}
        z_1^i & = & b+1 \\
        z_2^i & = & b\\
        &\vdots &\\
        z_n^i & = & b
    \end{array}}~\middle|~ n\in\N \right\}$ \\ \hline\hline
        \Block{2-1}{(c3)} & \begin{tabular}{c}$x^i = y^i \lor u^j = v^j$\\ where $i\neq j$\end{tabular} & $\left\{\disj{\begin{array}{r@{\mskip\medmuskip}c@{\mskip\medmuskip}l}
        a_{1,1}x_1^i + \dots + a_{1,m}x_m^i & = & b_1\\
        &\vdots &\\
        a_{n,1}x_1^i + \dots + a_{n,m}x_m^i & = & b_n\\
        \alpha_{1,1}x_1^j + \dots + \alpha_{1,\mu}x_\mu^j & = & \beta_1\\
        &\vdots &\\
        \alpha_{\nu,1}x_1^j + \dots + \alpha_{\nu,\mu}x_\mu^j & = & \beta_\nu
    \end{array}}~\middle|~\begin{array}{c}
    m,\,\mu,\,n,\,\nu\in\N_0\\
    \text{and}\\
    a_{p,q},\,b_p\in\{0,\,1\},\\
    \alpha_{p,q},\,\beta_p\in\{0,\,1\}\\
    \text{such that}\\
    a_{p,1} + \dots + a_{p,m} = 0\\
    \alpha_{p,1} + \dots + \alpha_{p,\mu} = 0
    \\\end{array}\right\}$ \\ \cline{2-3}
         & $x^i = y^i\lor y^i = z^i$ & $\left\{\disj{\begin{array}{r@{\mskip\medmuskip}c@{\mskip\medmuskip}l}
        a_{1,1}x_1^i + \dots + a_{1,m}x_m^i & = & b_1 \\
        &\vdots &\\
        a_{n,1}x_1^i + \dots + a_{n,m}x_m^i & = & b_n
    \end{array}}~\middle|~\begin{array}{c}
    m,\,n\in\N\text{ and}\\
    a_{p,q},\,b_p\in\{0,\,1\}\\
    \text{such that}\\
    a_{p,1} + \dots + a_{p,m} = 0
    \\\end{array}\right\}$ \\ \hline\hline
        (c4) & $x^i + y^i = 1$ & $\{x^i + y^i = 1\}$\\ \hline\hline
        (c5) & $x^i + y^i = u^j + v^j$ & $\{x_1^i+\dots+x_{2n}^i + y_1^j+\dots+y_{2m}^j = 0~|~n,\,m\in\N \}$ \\ \hline\hline
        \Block{2-1}{(c6)} & \begin{tabular}{c}$x^{s_1} + x^{s_2} + \dots + x^{s_n} = b$\\ where $n\geq 3$\end{tabular} & $\left\{\begin{array}{c}\sum_{q\,=\,1}^n\sum_{p\,=\,1}^{2m_q+1} x_p^{s_q} = b,\\\sum_{q\,=\,1}^n\sum_{p\,=\,1}^{2m_q} x_p^{s_q} = 0 \end{array}~\middle|~ m_1,\dots,\,m_n\in\N_0\right\}$\\ \cline{2-3}
        & $x^{s_1} + x^{s_2} = b$ & $\{x^{s_1} + x^{s_2} = b\}$ \\ \hline\hline
        (c7) & $\disj{\begin{array}{r@{\mskip\medmuskip}c@{\mskip\medmuskip}l}
        x_1^{s_1} = b_1 \,\lor & \cdots & \lor\, x_{m_1}^{s_1} = b_1  \\
        x_1^{s_2} = b_2 \,\lor & \cdots & \lor\, x_{m_2}^{s_2} = b_2 \\
        &\vdots& \\
        x_1^{s_n} = b_n \,\lor & \cdots & \lor\, x_{m_n}^{s_n} = b_n
    \end{array}}$ & $\left\{\disj{\begin{array}{r@{\mskip\medmuskip}c@{\mskip\medmuskip}l}
        x_1^{s_1} = b_1 \,\lor & \cdots & \lor\, x_{l_1}^{s_1} = b_1  \\
        x_1^{s_2} = b_2 \,\lor & \cdots & \lor\, x_{l_2}^{s_2} = b_2 \\
        &\vdots& \\
        x_1^{s_n} = b_n \,\lor & \cdots & \lor\, x_{l_n}^{s_n} = b_n
    \end{array}}~\middle|~\begin{array}{c}\text{for each } p\in E_n\\l_p\in\N_0,\\l_p\leq m_p\end{array}\right\}$ \\ \hline
    \end{NiceTabular}
    }
    \caption{The sets $S_\rho$ with the property $\qpp{\rho}\cap\KR^k = \eo{S_\rho}{2}\cup\qpp{\emptyset}$ for each $\rho\in\CR^k$.}
    \label{tab:qppcanon}
\end{table}

\begin{theorem} \label{thm:qppcanon}
For each $\rho\in\CR^k$, the set $S_\rho$ presented in Table~\ref{tab:qppcanon} satisfies
\begin{equation*}
    \qpp{\rho}\cap\KR^k = \eo{S_\rho}{2}\cup\qpp{\emptyset}\,.
\end{equation*}
\end{theorem}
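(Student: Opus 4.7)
The proof splits naturally into the two inclusions, and in both directions the argument reduces to a finite case analysis over the seven families (c1)--(c7). For the direction $\eo{S_\rho}{2}\cup\qpp{\emptyset}\subseteq\qpp{\rho}$, I would explicitly construct each shape appearing in $S_\rho$ by applying qpp formulas to $\rho$. Guided by Remark~\ref{rem:applyingeo}, composition (eo4) "glues" two equations into their sum modulo $2$ and passes their disjunctive tails along, while universal quantification (eo5) "drops" the leading equation. Thus producing a relation of shape $\bigvee_q z_q^i = b$ (or of the type (c6)/(c3) displayed in Table~\ref{tab:qppcanon}) amounts to a controlled sequence of compositions of $\rho$ with itself, followed by universal quantification of the auxiliary variables; this has essentially already been carried out in Lemmata~\ref{thm:canon45}--\ref{thm:canon12} for the generator itself, and the generalization is mechanical.

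For the harder inclusion $\qpp{\rho}\cap\KR^k\subseteq\eo{S_\rho}{2}\cup\qpp{\emptyset}$ the plan is to combine three ingredients. First, Theorem~\ref{thm:eoconj} gives $\qpp{\rho}=\eoc{\eo{\{\rho\}}{5}}$, so every $\tau\in\qpp{\rho}$ is a conjunction of relations from $\eo{\{\rho\}}{5}$. Second, Observation~\ref{thm:KRclosedoneo5} tells us $\eo{\{\rho\}}{5}\subseteq\KR^k$, and the rules in Remark~\ref{rem:applyingeo} allow a completely explicit (if lengthy) enumeration of $\eo{\{\rho\}}{5}$: starting from $\rho$, we close under adding/removing dummy variables, permuting, identifying (which may merge two linear equations into one and thereby collapse the disjunctive structure), composing (which adds the leading equations modulo~$2$ and concatenates the remaining disjuncts), and universally quantifying (which simply drops the leading equation). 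Third, if such a conjunction $\tau$ lies in $\KR^k$, then by De~Morgan's laws the complements of affine spaces on the right-hand sides must intersect to the complement of a single affine space; for type (c7) this restriction is already supplied by Theorem~\ref{thm:expressedbyconjunctions}, which says no proper conjunction is needed.

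Putting this together, for each canonical $\rho$ I would (i) run the closure under (eo1)--(eo5) using the mechanical rewriting rules of Remark~\ref{rem:applyingeo}, reading off the candidate "shapes" of linear combinations that arise, (ii) verify that these match $S_\rho$ as tabulated, and (iii) rule out any new key relation arising from nontrivial conjunctions (eo6) by a direct parity/linear algebra check. The invariants to track are: for type (c1), the presence of a $0$-disjunct and a $1$-disjunct cannot be decoupled; for type (c2), the "long" $x^i=y^i$ disjunct is preserved until one identifies $x^i$ with $y^i$; for type (c5), every equation produced has an even number of variables per sort; and for type (c6), the parities inherited from Lemma~\ref{thm:canon45} constrain which equations appear.

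The main obstacle will be cases (c3) and the non-binary subcase of (c6), where the closure under (eo1)--(eo5) produces an infinite family of linear equations parameterized by how many variables of each sort occur and by parity of their coefficient vectors. Showing that this family matches exactly the parity-constrained sets written in Table~\ref{tab:qppcanon}, and that no new $\KR^k$-relation sneaks in via conjunction (eo6), is where the bookkeeping is heaviest; I expect it to require a careful linear-algebraic argument that the only conjunctions of affine-complement relations that remain affine complements are those already listed. Once this is verified for the two hardest families, the remaining five cases are either immediate (types (c1), (c4)) or reduce to single-sort Post-style checks already implicit in Theorem~\ref{thm:qpp_post}.
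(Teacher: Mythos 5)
Your overall plan coincides with the paper's: prove $\eo{S_\rho}{2}\cup\qpp{\emptyset}\subseteq\qpp{\rho}$ by explicitly deriving the relations of Table~\ref{tab:qppcanon} from $\rho$, and obtain the converse from Theorem~\ref{thm:eoconj} together with the closedness procedure of Remark~\ref{rem:closedness}, i.e.\ by checking that the (eo1)--(eo5) closure and the admissible conjunctions (eo6) yield nothing outside the tabulated families. The paper's own proof is exactly such a sketch, so at that level of detail your outline is fine -- except at the one point the paper singles out as nontrivial, where your recipe would in fact fail.

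For type (c3) you claim that the relations of $S_\rho$ are obtained by ``compositions of $\rho$ with itself, followed by universal quantification,'' and later that already the (eo1)--(eo5) closure of a (c3) generator produces the parity-constrained family of linear equations. Both claims are false. An induction over (eo1)--(eo5), using the explicit two-element-domain computations (and noting that when the universally quantified variable occurs in several disjuncts, (eo5) does more than ``drop an equation'': e.g.\ $\forall y\,(x=y\lor y=z)=(x+z=1)$, which is what generates the $b=1$ rows of the table), shows that every relation in $\eo{\rho}{5}$ for a (c3) generator is a disjunction of linear equations each involving at most two variables. A relation such as $x_1^i+x_2^i+x_3^i+x_4^i=0$, whose complement is a hyperplane with a weight-four normal vector and hence is not an intersection of hyperplanes defined by equations in at most two variables, is therefore unreachable without conjunction. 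This is precisely why the paper's proof supplies the (eo6) identity
\begin{equation*}
    (x_1+x_2+x_3+x_4=0)=\bigwedge_{i\,\in\,E_4}\;\bigvee_{\substack{j\,\in\,E_4\\ j\,\neq\,i}}(x_i=x_j)\,,
\end{equation*}
and your forward direction for (c3), as well as the corresponding bookkeeping in the converse direction (where these parity families arise only as conjunctions of relations from $\eo{\rho}{5}$), must be repaired by incorporating conjunction into the construction. Two smaller miscitations: Lemmata~\ref{thm:canon45}--\ref{thm:canon12} decompose key relations into canonical ones and do not carry out the constructions needed here, and Theorem~\ref{thm:expressedbyconjunctions} concerns only canonical relations of type (c7), so it does not by itself control all relations of $\KR^k$ in $\qpp{\rho}$.
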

\begin{proof}
Verification follows directly from the tools developed in Section~\ref{sec:eo}. In particular:
\begin{itemize}
    \item Each relation in $S_\rho$ can be derived from $\rho$ by a direct application of elementary operations. In most cases, the construction is straightforward; the only somewhat nontrivial case arises for canonical relations of type (c3), where we use the identity:
    \begin{equation*}
        (x_1 + x_2 + x_3 + x_4 = 0) = \bigwedge_{i\,\in\,E_4}\bigvee_{\substack{j\,\in\,E_4\\ j\,\neq\,i}} (x_i = x_j)\,.
    \end{equation*}
    \item The closedness of the sets $\eo{S_\rho}{2}\cup\qpp{\emptyset}$ follows from the approach outlined in Remark~\ref{rem:closedness}. While this remark itself refers to some properties of canonical relations, it ultimately relies only on Theorem~\ref{thm:CR} and fundamental properties of elementary operations.\qedhere
\end{itemize}
\end{proof}

\begin{THMpropCRTHM}
If $\rho\in\CR^k$, then
\begin{enumerate}
    \item there is no $\rho'\in\qpp{\rho}$ of lower arity than $\rho$ such that $\qpp{\rho'} = \qpp{\rho}$;
    \item for all $n\in\N$ and $\sigma_1,\,\sigma_2,\dots,\,\sigma_n\in\qpp{\rho}$ such that
    \begin{equation*}
        \forall i\in E_n\quad \qpp{\sigma_i} \subsetneq \qpp{\rho}\,,
    \end{equation*}
    we have $\qpp{\sigma_1,\,\sigma_2,\dots,\,\sigma_n}\subsetneq\qpp{\rho}$\,.
\end{enumerate}
\end{THMpropCRTHM}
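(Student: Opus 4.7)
The plan is to prove both parts by case analysis over the seven types of canonical relations, relying on Theorem~\ref{thm:qppcanon} and Table~\ref{tab:qppcanon}, which explicitly describe $\qpp{\rho}\cap\KR^k$ for every $\rho\in\CR^k$. By Theorem~\ref{thm:CR} together with Theorem~\ref{thm:KR}, every quantified relational clone on $\{0,1\}$ is determined by its key relations, so every comparison of qpp-clones reduces to a finite inspection of the generators tabulated in $S_\rho$.

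For statement~(1), the strategy is a direct reading of Table~\ref{tab:qppcanon}. For each type, the generators of $S_\rho$ of arity strictly less than that of $\rho$ are strictly weaker versions of $\rho$ (involving fewer sorts, shorter disjuncts, or shorter linear sums), hence each such relation generates a proper sub-qpp-clone and cannot recover $\qpp{\rho}$. For the base cases (c1), (c4), and the lowest-arity subcase of (c6), we have $S_\rho=\{\rho\}$ up to rearrangement, so any relation of strictly smaller arity is already trivial.

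For statement~(2), I argue contrapositively: assuming $\qpp{\sigma_i}\subsetneq\qpp{\rho}$ for every $i$, I show $\rho\notin\qpp{\sigma_1,\dots,\sigma_n}$. Each $\sigma_i$ is characterized, up to elementary operations, by its intersection with $\KR^k$, and the strict inclusion $\qpp{\sigma_i}\subsetneq\qpp{\rho}$ forces those key relations into a structurally restricted subfamily of $S_\rho$. For types (c1)--(c6), the ``missing'' structural feature is visible from the table; for instance, in case (c2) with $i\neq j$ every strictly weaker key relation omits all sort-$i$ content beyond trivial equalities, and this property is preserved by every elementary operation. A short inspection in each subcase shows that the missing feature cannot be restored through compositions~(eo4), universal quantifications~(eo5), or conjunctions~(eo6). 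For type~(c7), Theorem~\ref{thm:expressedbyconjunctions} eliminates the conjunction step: if $\rho\in\qpp{\sigma_1,\dots,\sigma_n}$ then $\rho\in\eo{\{\sigma_1,\dots,\sigma_n\}}{5}$, and one verifies that every application of (eo4)--(eo5) to strictly weaker (c7)-relations collapses either to the full relation or to a relation still strictly weaker than $\rho$, so $\rho$ can never be produced.

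The main obstacle is the (c7) case of statement~(2). Theorem~\ref{thm:expressedbyconjunctions} reduces the problem to the level of $\eo{\cdot}{5}$, but it still requires careful bookkeeping of the parameter tuples $(l_1,\dots,l_n)\preceq(m_1,\dots,m_n)$ attached to each $\sigma_i$ and to every relation constructed from them by (eo1)--(eo5). The key combinatorial observation is that these operations cannot strictly increase any coordinate $l_p$; hence, if each $\sigma_i$ has a parameter tuple $(l_1^{(i)},\dots,l_n^{(i)})\not\succeq(m_1,\dots,m_n)$, then no relation constructible from $\{\sigma_1,\dots,\sigma_n\}$ via (eo1)--(eo5) can reach $\rho$, completing the argument.
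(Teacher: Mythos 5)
Your proposal is correct and follows essentially the same route as the paper: statement (1) is read off the classification in Table~\ref{tab:qppcanon}, and statement (2) amounts to showing that the set of all strictly weaker relations $T_\rho\subseteq\qpp{\rho}$ satisfies $\qpp{T_\rho}\subsetneq\qpp{\rho}$, verified type by type with elementary operations and Remark~\ref{rem:closedness} -- your contrapositive formulation is the same argument. The only difference is one of emphasis: you single out type (c7), handled via Theorem~\ref{thm:expressedbyconjunctions} and the monotone parameter tuples of Observation~\ref{thm:predtotuple}, whereas the paper flags (c3) (via Lemma~\ref{thm:decomposition}) and (c6) as the cases worth spelling out.
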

\begin{proof}
Statement (1) follows from the classification given in Table~\ref{tab:qppcanon}. Specifically, for each $\sigma\in S_\rho$ with lower arity than $\rho$, we observe that
\begin{equation*}
    \qpp{\sigma} \subsetneq \qpp{\rho}\,.
\end{equation*}
This is immediate, as all such relations $\sigma$ are simple and there are only a~few cases to consider.

For statement (2), we compute the set
\begin{equation*}
    T_\rho \coloneqq \{\sigma\in\qpp{\rho} \mid \qpp{\sigma} \subsetneq \qpp{\rho} \}
\end{equation*}
for each $\rho\in\CR^k$. Then, we observe that these sets have the property
\begin{equation*}
    \qpp{T_\rho} \subsetneq \qpp{\rho}\,, \tag{$\ast$}
\end{equation*}
which immediately establishes the claim.

Both computation of $T_\rho$ and verification of $(\ast)$ are straightforward applications of elementary operations and Remark~\ref{rem:closedness}.

The only cases worth highlighting are when $\rho$ is of type (c3) or (c6). In the former, the computation of $T_\rho$ simplifies using Lemma~\ref{thm:decomposition}. In the latter, when $\rho$ is of the form
\begin{equation*}
    \rho(x^{s_1},\, x^{s_2},\dots,\, x^{s_n}) = (x^{s_1} + x^{s_2} + \dots + x^{s_n} = b)\,,
\end{equation*}
the case for $n=2$ is trivial, while for $n\geq 3$, we obtain
\begin{equation*}
    T_\rho = \eo{\left\{\sum_{q\,=\,1}^n\sum_{p\,=\,1}^{2m_q} x_p^{s_q} = 0 ~\middle|~ m_1,\dots,\,m_n\in\N_0\right\}}{2}\cup\qpp{\emptyset}\,,
\end{equation*}
which is a quantified relational clone strictly contained in $\qpp{\rho}$, completing the~proof.
\end{proof}
\subsection{Auxiliary statements from order theory}\label{sec:defproofslattice}
\begin{THMACCFGTHM}
Let $A$ be a~nonempty countable set and let $\cls$ be an algebraic closure operator on $A$. The poset $(\mathfrak{L}_{\cls}(A),\,\subseteq)$ satisfies ACC if and only if every element of $\mathfrak{L}_{\cls}(A)$ is finitely generated with respect to $\cls$.
\end{THMACCFGTHM}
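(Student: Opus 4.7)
The plan is to prove both implications separately, relying essentially only on properties (cl1)--(cl4) of algebraic closure operators, together with the countability of $A$ for the forward direction.

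For the forward direction (ACC $\Rightarrow$ every closed set finitely generated), I would fix an arbitrary $B \in \mathfrak{L}_{\cls}(A)$. Since $A$ is countable, so is $B$; enumerate its elements as $B = \{b_1, b_2, \ldots\}$ (the argument is trivial if $B$ is finite). Define $C_n \coloneqq \cls\{b_1,\dots,b_n\}$. By (cl2), this forms an ascending chain $C_1 \subseteq C_2 \subseteq \cdots$ in $\mathfrak{L}_{\cls}(A)$. By ACC, there is $m$ such that $C_m = C_n$ for every $n \geq m$. Using (cl4), I would then argue that $B = \cls B = \bigcup_{F \subseteq B,\,|F|<\infty} \cls F = \bigcup_{n\in\N} C_n = C_m$, so $B$ is generated by the finite set $\{b_1,\dots,b_m\}$.

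For the converse (every closed set finitely generated $\Rightarrow$ ACC), I would take an arbitrary ascending chain $B_1 \subseteq B_2 \subseteq \cdots$ in $\mathfrak{L}_{\cls}(A)$ and consider its union $B \coloneqq \bigcup_{n\in\N} B_n$. The key intermediate step is to verify that $B$ is closed; this is where the algebraicity axiom (cl4) is essential. Indeed, any finite $F \subseteq B$ lies in some $B_n$ (because the chain is ascending), so $\cls F \subseteq \cls B_n = B_n \subseteq B$, and (cl4) together with (cl1) then gives $\cls B = B$. By hypothesis, $B$ is finitely generated: $B = \cls\{a_1,\dots,a_m\}$ for some $a_1,\dots,a_m \in B$. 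Each $a_i$ lies in some $B_{n_i}$, so choosing $n \coloneqq \max_i n_i$ yields $\{a_1,\dots,a_m\} \subseteq B_n$ and thus $B = \cls\{a_1,\dots,a_m\} \subseteq B_n \subseteq B$, forcing the chain to stabilize at index $n$.

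I expect no serious obstacle: both directions are standard manipulations of the closure-operator axioms. The only subtle point is making sure the algebraicity axiom (cl4) is invoked correctly -- in the forward direction to identify $B$ with $\bigcup_n C_n$, and in the backward direction to show that the union of an ascending chain of closed sets is itself closed. The countability assumption is used exactly once, in the forward direction, to enumerate $B$; without it one would need some form of the Axiom of Choice, as noted in the statement.
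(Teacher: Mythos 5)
Your proof is correct and follows essentially the same route as the paper's: in the forward direction both enumerate the (countable) closed set, form the chain of closures of initial segments, and use ACC together with algebraicity (cl4) to conclude finite generation; in the backward direction both show the union of an ascending chain is closed via (cl4), then use a finite generating set to force stabilization. No gaps worth noting.
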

\begin{proof}
($\implies$) Let us assume that $(\mathfrak{L}_{\cls}(A),\,\subseteq)$ satisfies ACC. Let us pick $B\in \mathfrak{L}_{\cls}(A)$; we will show that $B$ is finitely generated.

Since $A$ is countable, we may put $B = \{a_1,\,a_2,\,a_3,\dots\}$. We then assume the~following infinite chain in $(\mathfrak{L}_{\cls}(A),\,\subseteq)$:
\begin{equation*}
    \cls\{a_1\}\subseteq \cls\{a_1,\,a_2\}\subseteq\cls\{a_1,\,a_2,\,a_3\} \subseteq \dots \subseteq B\,.
\end{equation*}
Since $\mathfrak{L}_{\cls}(A)$ satisfies ACC, there is $n\in\N$ such that
\begin{equation*}
    \cls\{a_1,\dots,\,a_n\} = \cls\{a_1,\dots,\,a_n,\,a_{n+1}\} = \cls\{a_1,\dots,\,a_n,\,a_{n+1},\,a_{n+2}\} = \dots
\end{equation*}
Thus
\begin{equation*}
    \bigcup_{l\,\in\, n} \cls\{a_1,\dots,\,a_l\} = \cls\{a_1,\dots,\,a_n\}\,,
\end{equation*}
and since
\begin{equation*}
    B = \bigcup_{l\,\in\, \N} \{a_1,\dots,\,a_l\} \subseteq \bigcup_{l\,\in\, \N} \cls\{a_1,\dots,\,a_l\} \subseteq \bigcup_{\substack{C\,\subseteq\,B\\ |C|\,\leq\,\infty}} \cls C = \cls B = B\,,
\end{equation*}
we have shown that $B$ is finitely generated by the set $\{a_1,\dots,\,a_n\}$.

($\impliedby$) Let us assume that every element of $(\mathfrak{L}_{\cls}(A),\,\subseteq)$ is finitely generated with respect to $\cls$. Let us consider an ascending chain
\begin{equation*}
    B_1 \subseteq B_2 \subseteq B_3 \subseteq \dots\,,
\end{equation*}
where $B_1,\,B_2,\,B_3,\dots\in \mathfrak{L}_{\cls}(A)$. In order to show that this chain terminates, we consider the set
\begin{equation*}
    B = \bigcup_{l\,\in\,\N} B_l\,.
\end{equation*}
We may notice that $B = \cls B$. Indeed, for any $a\in \cls B$, we have
\begin{equation*}
    a\in \cls B = \bigcup_{\substack{C\,\subseteq\,B\\ |C|\,\leq\,\infty}} \cls C\,,
\end{equation*}
thus, there is a finite set $\{b_1,\dots,\,b_n\}\subseteq B$ such that $a\in \cls \{b_1,\dots,\,b_n\}$. For each $i\in E_n$, the element $b_i$ is contained within some $B_{q_i}$; thus, for $q = \max \{q_1,\dots,\,q_n\}$, we have
\begin{equation*}
    a\in \cls \{b_1,\dots,\,b_n\} \subseteq \cls B_q = B_q\subseteq B\,.
\end{equation*}
Hence, $B$ is a closed set, which is finitely generated by the assumption. Let us put $B = \cls\{a_1,\dots,\,a_m\}$ for some $m\in\N$. Then, for each $i\in E_m$, the element $a_i$ is contained within some $B_{p_i}$; thus, for $p = \max\{p_1,\dots,\,p_m\}$, we have $\{a_1,\dots,\,a_m\} \subseteq B_p$, which implies
\begin{equation*}
     B = \cls \{a_1,\dots,\,a_m\} \subseteq \cls B_p = B_p\subseteq B_{p+1} \subseteq B_{p+2} \subseteq \dots \subseteq B\,.
\end{equation*}
Thus, all the terms must be equal, which finishes the proof of ACC.
\end{proof}

\begin{definition}[Order theory concepts III]
Let $(P,\,\leq)$ be a~poset. A~set $O\subseteq P$ such that $\forall\,p,\,q\in O: (p\nleq q)\land(q\nleq p)$ is called \textit{antichain}.

Let $Q\subseteq P$. An element $p\in Q$ is called \textit{minimal} in $Q$ if there is no $q\in Q$ such that $q\leq p$.

We say $P$ is \textit{well-founded} if every nonempty subset of $P$ has a minimal element.
\end{definition}
\begin{lemma}\label{thm:upsetsantichains}
Let $(P,\,\leq)$ be a~poset. If $P$ is well-founded, then, for every upset $U \subseteq P$, there is an antichain $O\subseteq P$ such that $U = \upset_\leq O$.
\end{lemma}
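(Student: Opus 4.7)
The plan is to take $O$ to be the set of minimal elements of $U$, and verify the two required properties using well-foundedness.

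First I would check that $O$ is indeed an antichain: if $o_1, o_2 \in O$ with $o_1 \leq o_2$, then $o_1$ witnesses that $o_2$ is not minimal in $U$ unless $o_1 = o_2$, so distinct minimal elements are incomparable. Then the inclusion $\upset_\leq O \subseteq U$ is immediate, since $O \subseteq U$ and $U$ is an upset.

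The substantive step is the reverse inclusion $U \subseteq \upset_\leq O$. Given $u \in U$, I would consider the set
\begin{equation*}
    D_u \coloneqq \{p \in U \mid p \leq u\}\,,
\end{equation*}
which is nonempty because it contains $u$. By well-foundedness of $P$, the subset $D_u$ has a minimal element $o$. I claim that $o$ is in fact minimal in $U$: if $p \in U$ satisfied $p \leq o$, then by transitivity $p \leq u$, so $p \in D_u$, contradicting minimality of $o$ in $D_u$. Hence $o \in O$, and since $o \leq u$, we have $u \in \upset_\leq O$.

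The only delicate point is that well-foundedness only guarantees minimal elements of \emph{nonempty subsets of $P$}, and ``minimal in $D_u$'' is a priori weaker than ``minimal in $U$''. The argument above resolves this by using the fact that $D_u$ is itself downward closed within $U$ below $u$, so a minimal element of $D_u$ is automatically minimal in all of $U$. I do not expect any further obstacle.
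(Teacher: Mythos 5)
Your proposal is correct and follows essentially the same route as the paper: take $O$ to be the minimal elements of $U$, and for each $u\in U$ apply well-foundedness to the set $\{p\in U\mid p\leq u\}$ (the paper's $U_x$), then observe that a minimal element of this set is already minimal in $U$ because anything strictly below it would again lie in the set. The ``delicate point'' you flag is resolved exactly as in the paper, so there is nothing to add.
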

\begin{proof}
Let $U$ be an upset in $P$ and let $O\subseteq U$ be the set of minimal elements of $U$. $O$ is nonempty since $P$ is well-founded and $O$ is an antichain since the~comparability of any two distinct elements would contradict the~minimality of one of them. We will prove $U =\upset_\leq O$.

Since $O\subseteq U$, we have $\upset_\leq O \subseteq\upset_\leq U = U$. Conversely, if $x\in U$, we consider the~set $U_x = \{u\in U\mid u\leq x\}$. This set is nonempty ($x\in U_x$) and thus has a~minimal element $m_x\in U_x$ since $P$ is well-founded. Moreover $m_x\in O$ -- if not, there is $m'\in U\setminus U_x$ such that $m'\lneq m_x$, but then $m'\lneq m_x\leq x$ and thus, by definition, $m'\in U_x$. This contradicts the minimality of~$m_x$. Hence, $x\in\upset_\leq O$ since $x\geq m_x\in O$, which concludes the proof of ``$\supseteq$''.
\end{proof}

\begin{THMdownsetsofNmTHM}
Let $m\in\N$ and let $\preceq$ be the product ordering on $\N_0^m$. The~poset $(\mathfrak{L}_{\downset_\preceq}(\N_0^{m}),\,\supseteq)$ satisfies ACC.
\end{THMdownsetsofNmTHM}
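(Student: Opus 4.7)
The plan is to reformulate the statement in terms of upsets via complementation, and then invoke Dickson's lemma (the fact that $(\N_0^m,\preceq)$ is a well-partial-order) together with Lemma~\ref{thm:upsetsantichains}.

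First I would observe that a set $D\subseteq \N_0^m$ is a downset with respect to $\preceq$ if and only if its complement $U=\N_0^m\setminus D$ is an upset, and that complementation reverses inclusion. Hence an ascending chain in $(\mathfrak{L}_{\downset_\preceq}(\N_0^{m}),\,\supseteq)$, i.e.\ a descending chain $D_1\supseteq D_2\supseteq D_3\supseteq\cdots$ of downsets, corresponds bijectively to an ascending chain $U_1\subseteq U_2\subseteq U_3\subseteq\cdots$ of upsets in $(\N_0^m,\preceq)$. So it suffices to show that every such ascending chain of upsets stabilizes.

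Next I would note that $(\N_0^m,\preceq)$ is well-founded: the coordinate sum is a strictly order-preserving map into $(\N_0,\leq)$, and $\N_0$ is well-founded, so any nonempty subset of $\N_0^m$ contains a minimal element. Hence Lemma~\ref{thm:upsetsantichains} applies: every upset $U\subseteq \N_0^m$ is of the form $\upset_\preceq O$ for some antichain $O$ (namely, its set of minimal elements). The key external ingredient I would invoke is Dickson's lemma, which states that $(\N_0^m,\preceq)$ contains no infinite antichain; a short self-contained proof is by induction on $m$, using that any infinite sequence in $\N_0$ has an infinite non-decreasing subsequence, so any infinite sequence in $\N_0^m$ has an infinite subsequence non-decreasing in every coordinate, which would be a chain rather than an antichain. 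Consequently every antichain in $\N_0^m$ is finite, and so every upset of $\N_0^m$ is of the form $\upset_\preceq O$ for a \emph{finite} antichain $O$.

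Finally, given an ascending chain $U_1\subseteq U_2\subseteq \cdots$ of upsets, let $U=\bigcup_{i\in\N}U_i$, which is again an upset. By the previous paragraph, $U=\upset_\preceq\{o_1,\dots,o_n\}$ for some finite antichain. For each $j\in E_n$ choose $i_j\in\N$ with $o_j\in U_{i_j}$, and put $N=\max\{i_1,\dots,i_n\}$. Then $\{o_1,\dots,o_n\}\subseteq U_N$, and since $U_N$ is an upset,
\[
U=\upset_\preceq\{o_1,\dots,o_n\}\subseteq U_N\subseteq U,
\]
so $U_i=U_N$ for all $i\geq N$, proving that the chain stabilizes. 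The main (and only nontrivial) obstacle is Dickson's lemma; everything else is routine manipulation of downsets, upsets, and the Lemma~\ref{thm:upsetsantichains} already in the paper.
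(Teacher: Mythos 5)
Your proposal is correct and follows the same overall route as the paper: pass from descending chains of downsets to ascending chains of upsets, use well-foundedness of $(\N_0^m,\preceq)$ and Lemma~\ref{thm:upsetsantichains} to write every upset as $\upset_\preceq O$ for an antichain $O$, and reduce everything to the finiteness of antichains in $\N_0^m$. You differ in two local points. First, you prove the no-infinite-antichain statement (Dickson's lemma) by the monotone-subsequence argument -- extracting, coordinate by coordinate, an infinite subsequence that is non-decreasing in every coordinate, hence a chain -- whereas the paper argues by a pigeonhole induction on $m$: fixing one element $(n_1,\dots,n_m)$ of a putative infinite antichain, every other element is strictly smaller in some coordinate, so infinitely many of them agree on the value of one coordinate, and deleting that coordinate yields an infinite antichain in $\N_0^{m-1}$. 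Both arguments are standard and correct. Second, you verify stabilization of the ascending chain of upsets directly (take the union, express it via a finite antichain, trap the finitely many generators in some $U_N$), while the paper instead invokes Lemma~\ref{thm:ACCFG} together with the countability of $\N_0^m$; your direct argument is essentially the $\impliedby$ direction of that lemma inlined for this special case, with the minor advantage that it does not need the countability hypothesis.
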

\begin{proof}
The~poset $(\mathfrak{L}_{\downset_\preceq}(\N_0^m),\,\supseteq)$ of downsets with dual ordering satisfies ACC if and only if the poset $(\mathfrak{L}_{\upset_\preceq}(\N_0^m),\,\subseteq)$ of upsets satisfies ACC. $\N_0^m$ is clearly well-founded, thus, by Lemma \ref{thm:upsetsantichains}, every upset is generated by some antichain in $\N^m$ with respect to $\upset_\preceq$. Moreover, $\N_0^m$ is countable, hence, we only need to show that every antichain of $(\N_0^m,\,\preceq)$ is finite due to Lemma \ref{thm:ACCFG}.

We will finish the proof by induction on $m$. Every antichain in $\N_0$ is finite since $\N_0$ is linearly ordered.

Assume that every antichain in $\N_0^{m-1}$ is finite (induction hypothesis). For the contradiction, let $O\subseteq \N_0^m$ be an~infinite antichain.

Let us pick arbitrary $(n_1,\dots,\,n_m)\in O$. Since $O$ is an antichain, for every element $(n_1',\dots,\,n_m')\in O\setminus\{(n_1,\dots,\,n_m)\}$ there is at least one $i\in E_m$ such that $n_i'\lneq n_i$. This observation lets us decompose $O$ as follows:
\begin{align*}
    O &= \{(n_1,\dots,\,n_m)\}\cup\bigcup_{i\,\in\,E_m} \{(n_1',\dots,\,n_m')\in O \mid n_i'\lneq n_i\} =\\
    &= \{(n_1,\dots,\,n_m)\}\cup\bigcup_{i\,\in\,E_m}\bigcup_{ j\,\in\, E_{n_i - 1}} \{(n_1',\dots,\,n_m')\in O \mid n_i' = j\}\,.
\end{align*}
This union is finite while $O$ is infinite; thus, there are $i\in E_m$ and $j\in E_{n_i - 1}$ such that the set $\{(n_1',\dots,\,n_m')\in O \mid n_i' = j\}$ is infinite. Then, the set
\begin{equation*}
    \{(n_1',\dots,\,n_{i-1}',\,n_{i+1}',\dots,\,n_m')\mid (n_1',\dots,\,n_{i-1}',\,j,\,n_{i+1}',\dots,\,n_m')\in O\}
\end{equation*}
is an infinite antichain in $\N_0^{m-1}$ which is the~desired contradiction.
\end{proof}

%%%%%%%%%%%%%%%%%%%%%%%%%%%%%%%%%%%%%%%%%%%%%%%%%%%%%%%%%%%%%%%%%%%%%%
%% BIBLIOGRAPHY
%%%%%%%%%%%%%%%%%%%%%%%%%%%%%%%%%%%%%%%%%%%%%%%%%%%%%%%%%%%%%%%%%%%%%%

%\bibliographystyle{spmpsci}
%\bibliography{references}

%%%%%%%%%%%%%%%%%%%%%%%%%%%%%%%%%%%%%%%%%%%%%%%%%%%%%%%%%%%%%%
\end{document}